\newtheorem{theorem}{Theorem}
\newtheorem{lemma}[theorem]{Lemma}
\newtheorem{claim}[theorem]{Claim}
\theoremstyle{definition}
\newtheorem*{ack}{Acknowledgement}
\newcommand\eps{\varepsilon}
\renewcommand\le{\leqslant}
\renewcommand\ge{\geqslant}
\newcommand\E{{\mathbb E}}
\newcommand\Var{\operatorname{Var}} 
\renewcommand\Pr{{\mathbb P}}
\newcommand{\indic}[1]{\mathbbm{1}_{\{{#1}\}}} 
\newcommand\floor[1]{\lfloor #1 \rfloor}
\newcommand\floorBL[1]{\Bigl\lfloor #1 \Bigr\rfloor}
\newcommand\cD{\mathcal{D}}
\newcommand\cE{\mathcal{E}}
\newcommand\cF{\mathcal{F}}
\newcommand\cH{{\mathcal H}}
\newcommand\cG{{\mathcal G}}
\newcommand\cI{{\mathcal I}}
\newcommand\cP{{\mathcal P}}
\newcommand\cS{{\mathcal S}}
\newcommand\cT{{\mathcal T}}
\newcommand\cU{\mathcal{U}}
\newcommand\cX{\mathcal{X}}
\newcommand\cY{\mathcal{Y}}
\newcommand{\fX}{{\mathfrak X}} 
\newcommand{\vp}{{\mathbf p}}
\newcommand{\ex}{\mathrm{ex}} 
\newcommand{\knk}{K_n^{(k)}} 
\newcommand{\eknk}{E(\knk)} 
\newcommand\bigpar[1]{\bigl(#1\bigr)}
\newcommand\lrpar[1]{\left(#1\right)}
\newcommand\Bin{\operatorname{Bin}}
\begin{document}
\title{The lower tail: Poisson approximation revisited}
\author{Svante Janson%
\thanks{Department of Mathematics, Uppsala University, 
PO Box 480, SE-751~06 Uppsala, Sweden. 
E-mail: {\tt svante.janson@math.uu.se}. Partly supported by the Knut and Alice Wallenberg Foundation.}
\ and Lutz Warnke%
\thanks{Department of Pure Mathematics and Mathematical Statistics,
Wilberforce Road, Cambridge CB3 0WB, UK.
E-mail: {\tt L.Warnke@dpmms.cam.ac.uk}.}}
\date{May 27, 2014}
\maketitle

\begin{abstract}
The well-known ``Janson's inequality'' gives Poisson-like upper bounds 
for the lower tail probability $\Pr(X \le (1-\eps)\E X)$ when $X$ is 
the sum of dependent indicator random variables of a special form. 
We show that, for large deviations, this inequality is optimal 
whenever $X$ is approximately Poisson, i.e., when the dependencies 
are weak. We also present correlation-based approaches that, in 
certain symmetric applications, yield related conclusions when $X$ 
is no longer close to Poisson. 
As an illustration we, e.g., consider subgraph counts in random 
graphs, and obtain new lower tail estimates, extending earlier work 
(for the special case $\eps=1$) of Janson, {\L}uczak and 
Ruci{\'n}ski. 
\end{abstract}

\section{Introduction}\label{sec:intro}
In probabilistic combinatorics and related areas it often is important 
to estimate the probability that a sum $X$ of dependent indicator 
random variables is small or zero (to, e.g., show that few or none 
of a collection of events occurs). Moreover, it frequently is 
desirable that these probabilities are exponentially small (to, e.g., 
make union bound arguments amenable). 
In this paper we focus on such sharp estimates for the \emph{lower tail} 
$\Pr(X \le (1-\eps)\E X)$, where $X$ is of a form that is commonly 
used in, e.g., applications of the probabilistic method or random 
graph theory, see \cite{AS, JLR}. 
More precisely, the underlying probability space is the random subset 
$\Gamma_{\vp} \subseteq \Gamma$, with $|\Gamma|=N$ and 
$\vp = (p_i)_{i \in \Gamma}$, where each $i \in \Gamma$ is included, 
independently, with probability $p_i$. Given a family 
$\bigl(Q(\alpha)\bigr)_{\alpha \in \cX}$ of subsets of 
$\Gamma$ (often $\cX \subseteq 2^\Gamma$ and
$Q(\alpha)=\alpha$ is convenient) we define 
$I_{\alpha} = \indic{Q(\alpha) \subseteq \Gamma_{\vp}}$, so that
\begin{equation}\label{def:X}
X=\sum_{\alpha \in \cX} I_{\alpha} 
\end{equation}
counts the number of sets $Q(\alpha)$ that 
are entirely contained in $\Gamma_{\vp}$. We write $\alpha \sim \beta$ 
if $Q(\alpha) \cap Q(\beta) \neq \emptyset$ and $\alpha \neq \beta$, 
which intuitively means that there are `dependencies' between 
$I_{\alpha}$ and $I_{\beta}$. Let 
\begin{gather*}
\mu = \E X = \sum_{\alpha \in \cX} \E I_{\alpha}, \qquad \Pi = \max_{\alpha \in \cX}\E I_{\alpha}, \\
\Lambda = \mu + \sum_{(\alpha,\beta) \in \cX\times\cX: \alpha \sim \beta} \E I_{\alpha}I_{\beta} = (1+\delta) \mu .
\end{gather*}
(We write $\mu(X)$, $\Pi(X)$, $\Lambda(X)$ and $\delta(X)$ in case 
of ambiguity.) Note that $\delta$ measures how dependent the 
indicators $I_{\alpha}$ are (with $\delta=0$ in the case of 
independent summands), and that $\Var X \le \Lambda$ holds.  
In~\cite{Janson} the first author proved the following lower tail 
analogue 
(often called \emph{Janson's inequality}, see, e.g.,~\cite{AS})
of the Bernstein and Chernoff bounds for sums of independent
indicators (the case $\delta=0$):
with $\varphi(x)=(1+x)\log(1+x)-x$, 
for all $\eps \in [0,1]$ we have 
\begin{equation}\label{eq:J}
\Pr(X \le (1-\eps) \E X) \le \exp\bigl\{-\varphi(-\eps) \mu/(1+\delta)\bigr\} = \exp\bigl\{-\varphi(-\eps) \mu^2/\Lambda\bigr\} ,
\end{equation}
where $\varphi(-1)=1$, $\eps^2/2 \le \varphi(-\eps) \le \eps^2$ and 
$\varphi(-\eps)=\eps^2/2+O(\eps^3)$ for $\eps \in [0,1]$. 
As discussed in~\cite{Janson,JLR,AS}, inequality \eqref{eq:J} is 
quite attractive because it (i) yields Poisson-like tail estimates 
in the weakly dependent case $\delta=O(1)$, (ii) usually corresponds 
to a (one-sided) exponential version of Chebyshev's inequality, and 
(iii) often qualitatively matches the tail behaviour suggested by 
the central limit theorem. 
For example, it is well-known (and not hard to check) that 
$\Lambda = \Theta(\Var X)$ if $\widehat{p}=\max\{\Pi,\max_{i}p_i\}$ is 
bounded away from one, that $\widehat{p} \to 0$ implies 
$\Lambda \sim \Var X$, and that $\delta,\Pi \to 0$ implies 
$\Lambda \sim \mu \sim \Var X$.

The inequality~\eqref{eq:J} is nowadays a widely used tool in 
probabilistic combinatorics (see, e.g.,~\cite{AS, JLR} and the 
references therein), which makes it important to understand how 
`sharp' it is, i.e., whether the exponential rate of decay given by 
\eqref{eq:J} is best possible. For sums of independent Bernoulli 
random variables we have $\delta=0$ and \eqref{eq:J} coincides with the 
Chernoff bounds, where the exponent is well-known to be best possible 
if $\max_i p_i=o(1)$. However, it is doubtful whether such examples
are of any significance for concrete applications with $\delta > 0$.
Fortunately, whenever $\Pi < 1$, 
Harris' inequality~\cite{Harris1960} gives,
as noted in \cite{JLR_ineq}, 
\begin{equation}\label{eq:H}
\Pr(X =0) \ge \prod_{\alpha \in \cX}(1-\E I_{\alpha}) \ge \exp\bigl\{-\mu/(1-\Pi)\bigr\} . 
\end{equation}
The point is that \eqref{eq:J} and \eqref{eq:H} yield 
$\log \Pr(X=0) \sim -\mu$ whenever $\delta,\Pi \to 0$. 
This raises the intriguing question whether the exponent of 
\eqref{eq:J} is also sharp for other choices of $\eps$, in 
particular when $\eps \to 0$ (which, of course, is also an 
interesting problem in concentration of measure).

\subsection{Main result}
In this paper we prove that ``Janson's inequality''~\eqref{eq:J} is close 
to best possible in many situations of interest. 
Our first result shows that, for large deviations, the rate of decay 
of \eqref{eq:J} is optimal for \emph{any} random variable $X$ of type 
\eqref{def:X} that is approximately Poisson, i.e., whenever 
$\delta,\Pi \to 0$ (see~\cite{Janson}). 
\begin{theorem}\label{thm:LT}
With notations as above, if $\eps \in [0,1]$, $\max\{\Pi,\indic{\eps < 1}\delta\} \le 2^{-14}$ 
and $\eps^2\mu \ge \indic{\eps < 1}$, then 
\begin{equation}\label{eq:LT}
\Pr(X \le (1-\eps) \E X) \ge \exp\bigl\{-(1+\xi) \varphi(-\eps) \mu \bigr\} , 
\end{equation}
with $\xi = 135 \max\{\Pi^{1/8},\indic{\eps < 1}\delta^{1/8},\indic{\eps < 1}(\eps^2\mu)^{-1/4}\}$. 
\end{theorem}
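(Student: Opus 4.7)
The plan is to split on whether $\eps = 1$ or $\eps \in [0,1)$. For $\eps = 1$, Harris' inequality~\eqref{eq:H} gives $\Pr(X = 0) \ge \exp(-\mu/(1-\Pi))$ immediately; since $\Pi \le 2^{-14}$ yields $1/(1-\Pi) \le 1 + 2\Pi \le 1 + 135\,\Pi^{1/8}$, this matches~\eqref{eq:LT} (recall $\varphi(-1) = 1$).

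For $\eps \in [0,1)$, I would use exponential tilting in the natural exponential family of $X$. Set $z := 1-\eps$, $Z := \E z^X$, and define the tilted law $\Pr^z$ by $\mathrm{d}\Pr^z/\mathrm{d}\Pr = z^X/Z$. First, since each $I_\alpha$ is increasing in the underlying $\{0,1\}$-variables of $\Gamma_{\vp}$, the factor $z^{I_\alpha} = 1 - (1-z)I_\alpha$ is decreasing, so Harris' inequality gives the FKG-type lower bound $Z \ge \prod_\alpha(1-(1-z)\E I_\alpha) \ge \exp(-\eps\mu(1+O(\Pi)))$. Next I would control the first two moments of $X$ under $\Pr^z$: writing $\Pr^z(I_\alpha = 1) = z\, \E I_\alpha \cdot R_\alpha$ with $R_\alpha := \E[z^{X - I_\alpha}\mid I_\alpha = 1]/Z$, the conditional MGF differs from $Z$ only through the $I_\beta$ with $\beta \sim \alpha$, and a Harris-type comparison exploiting $\delta + \Pi \ll 1$ should yield $R_\alpha = 1 + O(\delta + \Pi)$. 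Summing would give $\E^z X = (1-\eps)\mu\,(1+O(\delta+\Pi))$, and a parallel second-moment analysis $\Var^z X = O(\Lambda)$. Chebyshev under $\Pr^z$ would then deliver $\Pr^z(X \in [a,(1-\eps)\mu]) \ge 1/4$ with $a := (1-\eps)\mu - c\sqrt{\Lambda}$ for some absolute constant $c$. Finally, since $z^{-X} \ge z^{-a}$ on $\{X \le a\}$ (as $z \in (0,1)$), the change of measure gives
\[
\Pr(X \le (1-\eps)\mu) = Z\, \E^z\bigl[z^{-X} \indic{X \le (1-\eps)\mu}\bigr] \ge Z\, z^{-a}\, \Pr^z\bigl(X \in [a,(1-\eps)\mu]\bigr),
\]
and $Z z^{-a} \ge \exp(-\eps\mu(1+O(\Pi))) \cdot (1-\eps)^{-(1-\eps)\mu + c\sqrt{\Lambda}}$ simplifies to $\exp(-\mu\varphi(-\eps) + O(\Pi\eps\mu) + O(\sqrt{\Lambda}\cdot|\log(1-\eps)|))$. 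Converting the absolute errors into multiplicative ones of the target exponent $\mu\varphi(-\eps) \asymp \eps^2\mu$ produces relative losses of order $\Pi$, $\delta + \Pi$, and $1/\sqrt{\eps^2\mu}$, all absorbed into the factor $1+\xi$.

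The main obstacle will be obtaining the moment bounds under the tilted measure $\Pr^z$, which is awkward because $z^X$ couples every indicator simultaneously. Proving $R_\alpha = 1 + O(\delta + \Pi)$ requires carefully comparing $\E[z^{X - I_\alpha}\mid I_\alpha = 1]$ to $Z$ via FKG-type correlation inequalities and the dependency structure encoded by $\delta, \Pi$. I expect the fractional exponents $\Pi^{1/8}$, $\delta^{1/8}$, $(\eps^2\mu)^{-1/4}$ in the final $\xi$ to arise from a truncation-and-iteration scheme: one first handles a \emph{typical} sub-event on which the moment estimates are sharper and then absorbs a small residual probability at the cost of taking roots.
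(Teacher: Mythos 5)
Your $\eps=1$ case is fine, and your overall scheme (tilt by $z^X$, show a window just below $(1-\eps)\mu$ is typical under the tilted law, untilt) is morally the route the paper takes, in smoothed form: the H\"older bound \eqref{eq:LTH} with exponent $p\to1$, the FKG lower bound on $\E e^{-sX}$ (Lemma~\ref{lem:JL}), and the integrated derivative bound (Lemma~\ref{lem:JU}), with $\eps$ near $1$ handled separately by Harris (Lemma~\ref{lem:LT4}). But the two steps you yourself flag as the obstacle are genuine gaps, not technicalities. The per-indicator estimate $R_\alpha=1+O(\delta+\Pi)$ is not available under the theorem's hypotheses: $\delta$ only controls the \emph{average} $\sum_{\alpha\sim\beta}\E I_\alpha I_\beta=\delta\mu$, so an individual $\alpha$ can have $\sum_{\beta\sim\alpha}\E I_\beta$ arbitrarily large while $\delta\le2^{-14}$, and any Harris-type comparison bounds $R_\alpha$ in terms of exactly such per-$\alpha$ neighbourhood sums; converting those into the averaged statement $\E^z X=(1-\eps)\mu(1+O(\delta+\Pi))$ then fights convexity (the error enters through exponentials of the neighbourhood sums, which Jensen averages in the wrong direction). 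The claim $\Var^z X=O(\Lambda)$ is likewise unsupported: under the tilted, non-product measure even indicators with disjoint $Q$-sets need not be negatively correlated --- this is precisely the failure of \eqref{eq:cl:cor:wrong} that the paper points out --- so the usual computation behind $\Var X\le\Lambda$ does not transfer. The paper's proof is engineered to avoid such upper-bound moment control altogether: it only ever uses \emph{lower} bounds on the Laplace transform and on $-\frac{d}{dx}\log\E e^{-xX}\ge\mu e^{-(1+\delta)x}$, both of which really are available from Harris plus the averaged $\delta$.

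Second, even granting $\E^z X=(1-\eps)\mu\bigl(1+O(\delta+\Pi)\bigr)$, that accuracy is too weak for the stated theorem. If the error has the wrong sign and exceeds a few multiples of $\sqrt{\Lambda}$ (possible, since $(\delta+\Pi)(1-\eps)\mu\gg\sqrt\Lambda$ is allowed), Chebyshev gives no lower bound on $\Pr^z(X\le(1-\eps)\mu)$ at all; and even in the favourable case the window endpoint must be lowered by the mean uncertainty, costing a factor $z^{\,c(\delta+\Pi)(1-\eps)\mu}$ in the payoff, i.e.\ a relative loss of order $(\delta+\Pi)/\eps$ in the exponent --- not $O(\max\{\Pi^{1/8},\delta^{1/8}\})$, and unbounded when $\eps\ll\delta$, which the hypotheses permit. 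The cure is to overtilt, i.e.\ to work at $s=p\,(-\log(1-\eps))$ with $p=1+\sigma$ and $\sigma$ a small power of $\max\{\Pi,\delta,(\eps^2\mu)^{-1}\}$, and to pay for the overtilt in the exponent; optimizing that trade-off (the paper's $\sigma=\max\{\Pi^{1/4},\delta^{1/4},[e(1-\eps)\eps^2\mu]^{-1/2}\}$ in Theorem~\ref{thm:LT3}, fed through Lemmas~\ref{lem:lower} and~\ref{lem:negl}) is the actual source of the exponents $1/8$ and $1/4$ in $\xi$, not a truncation-and-iteration scheme. Relatedly, splitting only at $\eps=1$ is not enough: your window loss $\sqrt\Lambda\log\frac1{1-\eps}$ and the overtilt analysis degrade as $\eps\to1$, which is why the paper switches to the Harris/calculus bound already for all $\eps\ge1-\eta$ with $\eta\asymp\max\{\Pi^{1/4},\delta^{1/4},(\eps^2\mu)^{-1/2}\}$ rather than only at $\eps=1$.
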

With $\varphi(-1)=1$ in mind, note that \eqref{eq:LT} qualitatively 
extends the lower bound \eqref{eq:H} resulting from Harris' 
inequality~\cite{Harris1960} to general $\eps$. Here the condition 
$\eps^2 \mu = \Omega(1)$ is natural in the context of exponentially 
small probabilities since $(1+\xi)\varphi(-\eps) = \Theta(\eps^2)$. 
As discussed, our favourite range is when $\delta,\Pi \to 0$. 
For large deviations, i.e., when $\eps^2 \mu \to \infty$ holds, 
\eqref{eq:J} and \eqref{eq:LT} then yield 
\begin{equation*}\label{eq:LDRF}
\log \Pr(X \le (1-\eps) \E X) \sim -\varphi(-\eps) \mu .
\end{equation*}
In words, Theorem~\ref{thm:LT} determines the \emph{large deviation 
rate function} $\log \Pr(X \le (1-\eps) \E X)$ up to second order 
error terms, closing a gap that was left open by the first author 
nearly 25 years ago. Indeed, Theorem~2 in~\cite{Janson} 
gives a lower bound, but it is at best off from the upper bound \eqref{eq:J} 
by a (multiplicative) constant factor in the exponent, and even this 
holds only for a more restricted range of the parameters.
Furthermore, Theorem~\ref{thm:LT} with $\delta=0$ also implies the  
optimality of the Chernoff bounds mentioned above.

Our second result yields a related conclusion when $\delta=O(1)$ and 
$\Pi$ is bounded away from one. More precisely, in this `weakly 
dependent' case Theorem~\ref{thm:LT2} shows that the decay of 
the inequality~\eqref{eq:J} is best possible up to constant 
factors in the exponent. 
\begin{theorem}\label{thm:LT2}
With notations as above, if $\eps \in [0,1]$, $\Pi < 1$ and 
$\eps^2\mu\ge \indic{\eps < 1/50}(1+\delta)^{-1/2}$, then 
\begin{equation}\label{eq:LT2}
\Pr(X \le (1-\eps) \E X) \ge \exp\bigl\{-K\varphi(-\eps) \mu(1+\delta^*) \bigr\} \ge \exp\bigl\{-K \eps^2 \mu (1+\delta^*)\bigr\} , 
\end{equation}
with $K=5000/(1-\Pi)^{5}$ and $\delta^*=\indic{\eps < 1/50}\delta$. 
\end{theorem}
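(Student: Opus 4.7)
The plan is to split the argument according to the size of $\eps$.

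\emph{Case $\eps\in[1/50,1]$.} Here $\delta^*=0$ and the target reduces to $\exp\{-K\eps^2\mu\}$ with $K=5000/(1-\Pi)^5$. I would simply appeal to Harris' inequality~\eqref{eq:H} via the containment $\{X=0\}\subseteq\{X\le(1-\eps)\mu\}$, giving $\Pr(X\le(1-\eps)\mu)\ge\exp\{-\mu/(1-\Pi)\}$. A routine check yields $K\eps^2\ge 2/(1-\Pi)^5\ge 1/(1-\Pi)$ whenever $\eps\ge 1/50$, so the Harris bound already dominates the target; the large constant $K$ is chosen precisely to provide this slack.

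\emph{Case $\eps\in[0,1/50)$.} This is the substantive case, with Gaussian-type target $\exp\{-K\eps^2\mu(1+\delta)\}$. I plan a deletion plus second-moment argument: pick a subset $T\subseteq\Gamma$ and write
\begin{equation*}
\Pr\bigl(X\le(1-\eps)\mu\bigr)\ \ge\ \Pr(\Gamma_{\vp}\cap T=\emptyset)\cdot\Pr\bigl(X\le(1-\eps)\mu\bigm|\Gamma_{\vp}\cap T=\emptyset\bigr).
\end{equation*}
If $T$ is chosen so that $\sum_{\alpha:\,Q(\alpha)\cap T\neq\emptyset}\E I_\alpha\ge 2\eps\mu$, then the conditional expectation of $X$ drops to at most $(1-2\eps)\mu$, and a one-sided Chebyshev bound using $\Var X\le\Lambda=\mu(1+\delta)$ yields a conditional probability that is at least a constant. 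Meanwhile the unconditional cost is $\prod_{i\in T}(1-p_i)\ge\exp\{-\sum_{i\in T} p_i/(1-\Pi)\}$, so one needs $\sum_{i\in T} p_i\lesssim\eps^2\mu(1+\delta)(1-\Pi)$ to match the target exponent.

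\emph{Main obstacle.} Constructing $T$ is the crux: it must simultaneously be heavy-hitting (killing mass $\ge 2\eps\mu$) and have small weight $\sum_{i\in T} p_i$. A natural attempt is a greedy peeling that repeatedly picks the element $i\in\Gamma$ maximizing (marginal mass killed)$/p_i$, analysed using the second-moment identity underlying $\Lambda$; the $(1-\Pi)^{-5}$ factor in $K$ probably reflects the compound cost of iterating Harris-type bounds during this peeling. An additional subtlety occurs on the boundary $\eps^2\mu\asymp(1+\delta)^{-1/2}$ allowed by the hypothesis, where a single Chebyshev step is too weak to yield a constant conditional probability; there the deviation $\eps$ should be broken into $O(\log(1+\delta))$ dyadic stages and the deletion argument iterated, staying inside the Chebyshev-effective regime at each stage. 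Tracking how all these iterations compound into a single clean exponent is where I expect most of the technical work to lie.
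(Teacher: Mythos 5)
Your first case ($\eps\ge 1/50$, Harris' inequality plus $\varphi(-\eps)\ge\eps^2/2$) is fine and is essentially the paper's own treatment of the large-$\eps$ regime. The problem is the case $\eps<1/50$, where your plan is not only incomplete at its crux (the construction of $T$) but is based on an event that is too unlikely to give the required exponent. Already in the independent singleton case $X\sim\Bin(N,p)$ (which the theorem covers, with $\delta=0$), killing expected mass $2\eps\mu$ by forcing a set $T$ to miss $\Gamma_{\vp}$ requires $\sum_{i\in T}p_i\ge 2\eps\mu$, so $\Pr(\Gamma_{\vp}\cap T=\emptyset)\le e^{-2\eps\mu}$, whereas the target is $e^{-\Theta(\eps^2\mu)}$; for $\eps$ small (the theorem allows $\eps\to0$ as long as $\eps^2\mu\ge(1+\delta)^{-1/2}$) this is off by the unbounded factor $1/\eps$ in the exponent, so no choice of $T$, greedy or otherwise, can satisfy your own budget $\sum_{i\in T}p_i\lesssim\eps^2\mu(1+\delta)(1-\Pi)$. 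The lower-tail event at level $(1-\eps)\mu$ is realized by a diffuse tilting of \emph{all} coordinates (each $p_i$ effectively shrunk by a factor about $1-\eps$), not by zeroing out a small set; this is precisely what the paper's proof implements via H\"older's inequality \eqref{eq:LTH} together with the Laplace-transform estimates of Lemmas~\ref{lem:JL}--\ref{lem:JU} and \ref{lem:lower}--\ref{lem:negl}, with $p=2/\tau$, $\tau=(1-\Pi)/5$.

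Two further points would also need repair even if a suitable $T$ existed. First, the bound $\prod_{i\in T}(1-p_i)\ge\exp\{-\sum_{i\in T}p_i/(1-\Pi)\}$ is invalid: the hypothesis $\Pi<1$ controls only the products $\E I_\alpha$, and individual $p_i$ may be arbitrarily close to (or equal to) $1$, so the correct Harris-type estimate \eqref{eq:H} cannot be transferred to raw element probabilities. Second, your one-sided Chebyshev step needs $(\eps\mu)^2\gtrsim\Lambda=\mu(1+\delta)$, i.e.\ $\eps^2\mu\gtrsim 1+\delta$, which is far stronger than the hypothesis $\eps^2\mu\ge(1+\delta)^{-1/2}$; the proposed dyadic iteration is not worked out and does not address this, whereas in the paper the weak hypothesis enters only through the benign error term of Lemma~\ref{lem:negl} (via $\eps^4\mu^2\ge(1+\delta)^{-1}$), with no variance-based step at all.
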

A key feature of \eqref{eq:LT2} is that it holds for \emph{any} 
$\Pi < 1$ (and that the dependence of $K$ on $\Pi$ is explicit). 
Note that usually $K=\Theta(1)$. Whenever $\delta=O(1)$, inequalities 
\eqref{eq:J} and \eqref{eq:LT2} then yield 
\begin{equation*}\label{eq:LDRF2}
\log \Pr(X \le (1-\eps) \E X) = -\Theta(\eps^2 \mu) ,
\end{equation*}
where the implicit constants differ by a factor of at most 
$2K(1+\delta)^2=O(1)$. This subsumes the folklore fact that Chernoff 
bounds (where $\delta=0$) are sharp up to constants in the exponent 
if $\max_i p_i$ is bounded away from one. 
While the numerical value of $K$ is often immaterial, better constant 
factors can typically be obtained, if desired, by reworking the proof 
(optimizing certain parameters to the situation at hand).

The proofs of Theorem~\ref{thm:LT} and~\ref{thm:LT2} hinge on 
H{\" o}lder's inequality and several estimates of the Laplace transform 
(which in turn are based on correlation inequalities), see 
Section~\ref{sec:LT}. 
In fact, an inspection of the proofs reveals that Theorem~\ref{thm:LT} 
and~\ref{thm:LT2} (as well as \eqref{eq:H}, Theorem~\ref{thm:LT3} and 
Lemma~\ref{lem:LT4}) remain valid for the more general correlation 
conditions (and setup) stated by Riordan and Warnke~\cite{RiordanWarnke2012J}. 
It would be interesting to know whether similar results also hold under the 
weaker dependency assumptions of Suen's inequality \cite{Suen,Janson:Suen}.

\subsection{Main example}
From an applications point of view it is important to also understand 
the sharpness of \eqref{eq:J} in the case $\delta=\Omega(1)$, i.e., 
when $X$ is no longer close to Poisson. In Section~\ref{sec:bstrp} we 
present correlation-inequality based bootstrapping approaches which 
often allow us to deal with this remaining `strongly dependent' case. 
The punchline seems to be that, in the presence of certain symmetries, 
the inequality~\eqref{eq:J} is oftentimes best possible up to 
constant factors in the exponent.

In this paper our main example is the number of small subgraphs in the 
binomial random graph $G_{n,p}$, which is a classical topic in random 
graph theory (see, e.g.,~\cite{ER1960,BB1981,R88}). 
It frequently serves as a test-bed for new probabilistic estimates (see, 
e.g.,~\cite{B82,JLR_ineq,Spencer1990,KimVu2000,DL,UTSG,CV2011}), and we 
shall use it to demonstrate the applicability of our bootstrapping approaches. 
In fact, we consider the more general random hypergraph $G^{(k)}_{n,p}$, 
with $k \ge 2$, where each of the $\binom{n}{k}$ edges of the complete $
k$-uniform hypergraph $K_n^{(k)}$ is included, independently, with 
probability $p$. Given a $k$-uniform hypergraph $H$, or briefly 
\emph{$k$-graph}, we define $X_H=X_H(n,p)$ as the number of copies of 
$H$ in $G^{(k)}_{n,p}$, where by a \emph{copy} we mean, as usual, a 
subgraph isomorphic to $H$. Furthermore, we write $e_H=|E(H)|$ and 
$v_H=|V(H)|$ for the number of edges and vertices of $H$, respectively. 
Theorem~\ref{thm:HG:const} shows that the lower tail of the distribution 
of $X_H$ is governed by $\Phi_H$, i.e., the expected number of copies 
of the `least expected' subgraph of $H$. This exponential rate of decay 
is consistent with normal approximation heuristics since 
$\Phi_H=\Theta\bigl((1-p)(\E X_H)^2/\Var X_H\bigr)$, see Lemma~3.5 in~\cite{JLR}. 
\begin{theorem}\label{thm:HG:const}
Let $H$ be a $k$-graph with $e_H \ge 1$. Define $\Phi_H = \Phi_H(n,p) = \min \{ \E X_J: J \subseteq H, e_J \ge 1\}$. 
There are positive constants $c$, $C$, $D$ and $n_0$, all depending 
only on $H$, such that for all $n \ge n_0$, $p \in [0,1)$ and 
$\eps \in [0,1]$ satisfying $\eps^2 \Phi_H \ge \indic{\eps < 1}D$ we have
\begin{equation}\label{eq:HG:const}
\exp\bigl\{- (1-p)^{-5} C \eps^2 \Phi_H\bigr\} \le \Pr(X_{H} \le (1-\eps) \E X_{H}) \le \exp\bigl\{-c \eps^2 \Phi_H\bigr\}.
\end{equation}
\end{theorem}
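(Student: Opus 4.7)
The upper bound is a direct application of Janson's inequality~\eqref{eq:J} to $X=X_H$. For subgraph counts in $\gnpk$ the standard second-moment computation (cf.\ Lemma~3.5 in~\cite{JLR}) shows that the contribution to $\Lambda(X_H)-\mu(X_H)$ from pairs of copies of $H$ sharing a non-empty common sub-$k$-graph isomorphic to some $K$ is of order $\mu(X_H)^2/\E X_K$, and summing over the finitely many $K\subseteq H$ yields $\Lambda(X_H) = \Theta(\mu(X_H)^2/\Phi_H)$, hence $\mu^2/\Lambda = \Theta(\Phi_H)$ (with implicit constants depending on $H$). Combined with $\varphi(-\eps)\ge\eps^2/2$, inequality~\eqref{eq:J} yields the upper half of~\eqref{eq:HG:const}; note that this direction does not need the $(1-p)^{-5}$ factor.

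For the lower bound let $J^*\subseteq H$ be a non-empty subgraph achieving $\E X_{J^*}=\Phi_H$. In the extremal case $\eps=1$ the inclusion $\{X_{J^*}=0\}\subseteq\{X_H=0\}$ combined with the Harris bound~\eqref{eq:H} applied to $X_{J^*}$ already gives $\Pr(X_H=0)\ge\exp(-\Phi_H/(1-p^{e_{J^*}}))\ge\exp(-\Phi_H/(1-p))$, which is stronger than~\eqref{eq:HG:const}. The main case is $\eps<1$, where I would bootstrap via Theorem~\ref{thm:LT2} applied to $X_{J^*}$. Since every non-empty subgraph of $J^*$ is also a subgraph of $H$, we have $\Phi_{J^*}=\Phi_H$; the same variance calculation as above then gives $\delta(X_{J^*})=O(1)$, and also $\Pi(X_{J^*})=p^{e_{J^*}}\le p$. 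Theorem~\ref{thm:LT2} therefore applies to $X_{J^*}$ and, for any admissible $\eps'\in(0,1]$, yields
\[
\Pr\bigl(X_{J^*}\le(1-\eps')\E X_{J^*}\bigr)\ge\exp\bigl(-C(1-p)^{-5}(\eps')^2\Phi_H\bigr)
\]
for a constant $C=C(H)$; the $(1-p)^{-5}$ in~\eqref{eq:HG:const} is precisely inherited from the factor $K=5000/(1-\Pi)^5$ in Theorem~\ref{thm:LT2}. It then remains to design a decreasing event $E$, built from $\{X_{J^*}\text{ small}\}$ together perhaps with auxiliary monotone Harris-type regularity events, such that $E\subseteq\{X_H\le(1-\eps)\E X_H\}$ and $\Pr(E)$ is still controlled by the right-hand side above with $\eps'=\Theta(\eps)$ (uniformly in $p$).

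The main obstacle is this transfer step. A naive deterministic inequality of the form $X_H\le(M/c_{H,J^*})X_{J^*}$, where $c_{H,J^*}$ counts copies of $J^*$ in $H$ and $M$ counts extensions of a fixed copy of $J^*$ to a copy of $H$ in $\knk$, is only effective when $p$ is close to $1$: the implicit factor $p^{-(e_H-e_{J^*})}$ ruins the $\eps^2$ dependence as $p\to 0$. The correlation-inequality bootstrapping framework of Section~\ref{sec:bstrp} must therefore provide a finer reduction --- for instance via a conditional second-moment (Chebyshev) bound on $X_H$ given a restriction of $X_{J^*}$, or via an FKG-based counting of copies of $H$ supported on the surviving copies of $J^*$ --- that preserves the correspondence $\eps'=\Theta(\eps)$ uniformly in $p\in[0,1)$. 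Once such a transfer is established, combining it with the $X_{J^*}$-tail bound above completes the lower half of~\eqref{eq:HG:const}.
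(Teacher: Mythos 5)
Your upper bound and your treatment of the extremal regime (Harris' inequality \eqref{eq:H} applied to a minimizing subgraph, using $\{X_{J^*}=0\}\subseteq\{X_H=0\}$) match the paper, and your identification of the overall strategy --- get the lower-tail estimate for the minimizer from Theorem~\ref{thm:LT2} (whence the $(1-p)^{-5}$), then transfer it to $X_H$ --- is the right one. But the proof is not complete: the transfer step, which you explicitly leave open (``it remains to design a decreasing event $E$\dots Once such a transfer is established\dots''), is precisely the hard core of the theorem, and nothing in your sketch supplies it. Your two suggested routes are exactly where the difficulty lives: conditioning on $\{X_{J^*}\le(1-\eps')\E X_{J^*}\}$ destroys the product structure of the underlying space, and the naive conditional FKG factorization \eqref{eq:cl:cor:wrong} is false in general (the paper gives a counterexample), so a conditional Chebyshev bound for $X_H$ given that event cannot be run directly. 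The symmetric-decomposition route (Theorem~\ref{thm:rcor}) sidesteps this but only at the price of a multiplicative $\gamma\eps$ loss, which forces an extra $\log(1/\eps)$ in the hypothesis and therefore cannot yield the clean condition $\eps^2\Phi_H\ge D$ claimed in the theorem.

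What the paper actually does is the vertex-symmetry bootstrap, Theorem~\ref{thm:vxsym}: fix $\cU=[\lfloor n/2\rfloor]$, let $Y_G$ count copies of $G$ inside $\cU$, and condition not on the rare event but on the \emph{entire edge set} of $G^{(k)}_{n,p}[\cU]$ (together with a boundedness event $\cD$ for pair-counts $\Psi_F$); one then controls $\E^* Z_{\cU}$ and $\Var^* Z_{\cU}$ and applies one-sided Chebyshev, obtaining $\Pr(X_H\le(1-\eps)\E X_H)\ge c\,\Pr(Y_G\le(1-\lambda\eps)\E Y_G)$ under only the natural hypothesis $(\eps\E X_H)^2\ge\Lambda(X_H)$. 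Two ingredients of this are absent from your proposal. First, the argument requires every copy of $G$ in $H$ to be \emph{induced} (so that the edges of $H'\setminus G'$ avoid $\cU$ and the conditional expectations factorize); this is why the paper proves the refined Lemma~\ref{lem:lambda:HG}, showing the minimizer can always be taken in $\cI_H$, rather than using an arbitrary minimizing $J^*$ as you do. Second, since the transfer needs $\lambda\eps\le 1$, the Harris argument must cover the whole range $\eps\ge\eps_0=(2\lambda)^{-1}$, not just $\eps=1$; your plan to run the bootstrap for all $\eps<1$ would break down for $\eps$ near $1$. Also note the paper applies Theorem~\ref{thm:LT2} to $Y_G$ (the count inside $\cU$, still with $\Pi\le p$ and $\delta=O(1)$) rather than to $X_{J^*}$, because that is what the transfer outputs. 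Until a transfer with these properties is actually proved, the lower bound in \eqref{eq:HG:const} remains unestablished for small $\eps$.
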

The upper bound of \eqref{eq:HG:const} follows from \eqref{eq:J} via 
standard calculations (see, e.g., \cite{JLR} or Lemma~\ref{lem:lambda:HG}), 
and so the real content of this theorem is the `matching' lower bound. 
A key feature of Theorem~\ref{thm:HG:const} is that $\eps$ is 
\emph{not} fixed, but may depend on $n$. In the context of exponentially 
decaying probabilities, note that the $\eps^2 \Phi_H = \Omega(1)$ condition 
is natural (unless $p \approx 1$). In applications $p$ is typically 
bounded away from one (in fact, $p=o(1)$ is often standard), in which 
case \eqref{eq:HG:const} yields 
\begin{equation}\label{eq:HG:LDRF:const}
\log \Pr(X_{H} \le (1-\eps) \E X_{H}) = - \Theta(\eps^2 \Phi_H) ,
\end{equation}
determining the large deviation rate function of $X_H$ up to constants 
factors. For the special case $\eps=1$ (and $k=2$) this was established 
more than 25 years ago by Janson, {\L}uczak and Ruci{\'n}ski~\cite{JLR_ineq}, 
and for $\eps \ge \eps_0$ an analogous statement is nowadays easily 
deduced from \eqref{eq:J} and \eqref{eq:H}, see also \eqref{eq:HG:const:H}. 
By contrast, the case $\eps \to 0$ seems to have eluded further 
attention, and Theorem~\ref{thm:HG:const} rectifies this (surprising) 
gap in the literature.

Although not our primary focus, in certain ranges our proof techniques 
are strong enough to establish the finer behaviour of the large 
deviation rate function. In particular, for the case in which there is 
only one subgraph $G \subseteq H$ with $\E X_G = \Theta(\Phi_H)$ we 
have two results that determine the leading constant in \eqref{eq:HG:LDRF:const}. 
More precisely, Theorem~\ref{thm:HG:ULO} applies if there is only one 
copy of $G$ in $H$ (which includes the case $G=H$), and Theorem~\ref{thm:HG:extr} 
applies if $G$ is an edge (in which case there are $e_H$ copies of 
$G$ in $H$). To state these results, for any given $k$-graph $H$ we set 
\begin{equation}\label{mk}
m_k(H) = \indic{e_H \ge 2}\max_{J \subseteq H, e_J \ge 2} \frac{e_J-1}{v_J-k} + \indic{e_H=1}\frac{1}{k}.   
\end{equation}
In addition, we define $\ex(n,H)$ as the maximum number of edges in 
an $H$-free $k$-graph with $n$ vertices. It is well-known (see, e.g.,~\cite{KHG})
that $\pi_H=\lim_{n \to \infty}\ex(n,H)/\binom{n}{k}$ exists, with 
$\pi_H \in [0,1)$, and that for graphs (i.e., $k=2$) we have 
$\pi_H=1-1/(\chi(H)-1)$, where $\chi(H)$ is the chromatic number 
of $H$. 
\begin{theorem}\label{thm:HG:ULO}
Let $G \subseteq H$ be $k$-graphs with $e_G \ge 1$. 
Assume that there is exactly one copy of $G$ in $H$, and that 
$p=p(n)=o(1)$ is such that $\E X_G = o(\E X_J)$ for all $G \neq J \subseteq H$ with $e_J \ge 1$. 
If $\eps=\eps(n) \in (0,1]$ satisfies $\eps^2 \E X_G \ge\indic{\eps<1}\omega\bigl(1+\indic{G \neq H, e_G \ge 2}\log(1/\eps)\bigr)$, then we have
\begin{equation}\label{eq:HG:ULO}
\log \Pr(X_{H} \le (1-\eps) \E X_{H}) \sim -\varphi(-\eps) \E X_{G} .
\end{equation}
\end{theorem}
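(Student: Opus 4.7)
The plan is to reduce the problem to the corresponding lower tail estimate for the ``driver'' variable $X_G$, to which both Janson's inequality~\eqref{eq:J} and Theorem~\ref{thm:LT} apply with vanishing error. Indeed, the hypothesis $\E X_G = o(\E X_J)$ applied to proper subgraphs $J \subsetneq G$ (with $e_J \geq 1$) forces $\delta(X_G) = o(1)$ via the standard identity $\delta(X_G) = \Theta\bigl(\sum_{J \subsetneq G,\, e_J \ge 1} \E X_G/\E X_J\bigr)$, and $\Pi(X_G) = p^{e_G} = o(1)$ since $p \to 0$; combined with $\eps^2 \E X_G \to \infty$, this verifies the hypotheses of both~\eqref{eq:J} and Theorem~\ref{thm:LT} for $X_G$, yielding
\[
\log \Pr\bigl(X_G \leq (1-\eps')\E X_G\bigr) \sim -\varphi(-\eps')\E X_G
\]
uniformly for $\eps'$ in a suitable neighbourhood of $\eps$. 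This already settles the case $G = H$, so I may henceforth assume $G \subsetneq H$.

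To transfer the asymptotic from $X_G$ to $X_H$, introduce $\bar W := \E X_H/\E X_G$ (the expected number of extensions of a copy of $G$ to a copy of $H$) and the concentration event
\[
\cG := \bigl\{\, |X_H - \bar W\, X_G| \leq \tau \E X_H \,\bigr\}
\]
for a parameter $\tau = \tau(n)$ tending to zero with $\tau = o(\eps)$. On $\cG$ one has the sandwich
\[
\{X_G \leq (1-\eps-\tau)\E X_G\} \;\subseteq\; \{X_H \leq (1-\eps)\E X_H\} \;\subseteq\; \{X_G \leq (1-\eps+\tau)\E X_G\},
\]
and since $\varphi(-\eps \pm \tau) = (1+o(1))\varphi(-\eps)$ whenever $\tau = o(\eps)$, combining the sandwich with the $X_G$ asymptotic above gives both halves of~\eqref{eq:HG:ULO} up to an additive error of $\Pr(\cG^c)$. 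It thus remains to show $\Pr(\cG^c) = \exp\{-\omega(\varphi(-\eps)\E X_G)\}$.

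The main obstacle is this exponential concentration of $X_H$ around $\bar W X_G$. My approach would be to write
\[
X_H - \bar W X_G = \sum_{\gamma \cong G,\; \gamma \subseteq K_n^{(k)}} \bigl(W_\gamma - \bar W\bigr) I_\gamma,
\]
where $I_\gamma = \indic{Q(\gamma) \subseteq \gnpk}$ and $W_\gamma$ is the (random) number of $H$-copies extending $\gamma$ within $\gnpk$, and then bound its tail by a moment or bootstrapping argument akin to those developed in Section~\ref{sec:bstrp}. When $e_G = 1$ the extensions can be controlled by direct second-moment calculations (and the $G = H$ case with $e_H \ge 2$ is vacuous). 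When $G \subsetneq H$ and $e_G \geq 2$, however, the extensions $W_\gamma$ themselves carry nontrivial fluctuations whose tail bounds lose polynomial-in-$1/\eps$ factors; the extra $\log(1/\eps)$ slack in the hypothesis $\eps^2 \E X_G \ge \omega\bigl(1 + \indic{G \ne H, e_G \ge 2}\log(1/\eps)\bigr)$ is there precisely to absorb these losses, yielding $\Pr(\cG^c) \leq \exp\{-\omega(\eps^2 \E X_G)\}$. Once this concentration is established, a careful choice of $\tau$ (just small enough that $\tau = o(\eps)$ and just large enough to keep $\Pr(\cG^c)$ controllable) balances the sandwich and establishes~\eqref{eq:HG:ULO}.
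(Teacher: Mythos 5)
Your reduction of the case $G=H$ to Theorem~\ref{thm:LT} and \eqref{eq:J}, and the observation that the hypotheses force $\delta(X_G)=o(1)$ and $\Pi(X_G)=p^{e_G}=o(1)$, are fine and agree with the paper. The problem is the main case $G\subsetneq H$: your entire transfer rests on the claim that, for some $\tau=o(\eps)$, the event $\cG=\{|X_H-\bar W X_G|\le\tau\E X_H\}$ fails with probability $\exp\{-\omega(\varphi(-\eps)\E X_G)\}$, and this claim is both unsupported by the tools you invoke and false in relevant regimes. The devices of Section~\ref{sec:bstrp} (Harris plus Markov, one-sided Chebyshev) only ever produce \emph{polynomial} failure probabilities such as $\Lambda/(\Lambda+(\eps\mu)^2)$ or $1-\gamma\eps/2$, and fixed moments of $X_H-\bar W X_G$ likewise give only polynomial tail bounds; neither can yield a bound beating $e^{-c\eps^2\E X_G}$. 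Worse, the concentration itself fails: take $k=2$, $H$ a triangle with a pendant edge, $G=K_3$ (exactly one copy of $G$ in $H$), $p=n^{-3/5}$ and $\eps$ constant, so that $\E X_G=\Theta(n^{6/5})$ is the unique minimal subgraph expectation, $\E X_H=\Theta(n^{8/5})$ and $\bar W=\Theta(np)$. If one vertex has degree $d\approx(\tau n^4p^3)^{1/3}\approx\tau^{1/3}n^{11/15}\gg np$, then it lies in about $d^2p/2$ triangles, each with about $d$ pendant extensions, producing a surplus $X_H-\bar W X_G$ of order $d^3p\ge\tau\E X_H$; such a vertex occurs with probability at least $\exp\{-O(\tau^{1/3}n^{11/15}\log n)\}$, which is vastly larger than $\exp\{-c\eps^2\E X_G\}=\exp\{-\Theta(n^{6/5})\}$. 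Hence $\Pr(\cG^c)$ swamps $\Pr(X_G\le(1-\eps-\tau)\E X_G)$, your lower bound $\Pr(X_G\le\cdot)-\Pr(\cG^c)$ is vacuous, and the upper-bound half of your sandwich fails for the same reason (that half is in any case unnecessary: since $C_{G,H}=1$, Lemma~\ref{lem:lambda:HG} gives $(\E X_H)^2/\Lambda(X_H)=(1+o(1))\E X_G$, so \eqref{eq:J} applied to $X_H$ yields the upper bound directly).

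The paper avoids any two-sided concentration of $X_H$ around $\bar W X_G$. It uses the symmetric decomposition $X_H=\sum_{\beta}I_\beta X_\beta$ over the copies $\beta$ of $G$ (each $H$-copy contains exactly one $G$-copy, so $\kappa=0$ in Theorem~\ref{thm:rcor}), conditions on the \emph{decreasing} event $\{X_G\le(1-(1+\gamma)\eps)\E X_G\}$, and applies Harris' inequality (Claim~\ref{cl:cor}) to show the conditional mean of $X_H$ drops below $(1-(1+\gamma/2)\eps)\E X_H$; Markov's inequality then gives conditional probability at least $\gamma\eps/2$ of $\{X_H\le(1-\eps)\E X_H\}$. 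The only loss is this polynomial prefactor $\gamma\eps/2$, and it is exactly this factor -- not any concentration loss -- that the $\log(1/\eps)$ slack in the hypothesis absorbs (choosing $\gamma=2e^{-\omega^{1/2}}$); the lower tail of $X_G$ itself is handled by Theorem~\ref{thm:LT}, the shift from $\eps$ to $(1+\gamma)\eps$ by Lemma~\ref{lem:varphi3}, and the regime $\eps=1-o(1)$ by \eqref{eq:rcor:0} and Lemma~\ref{lem:LT4}. If you wanted to salvage your scheme you would have to bound $\Pr(\cG^c\mid X_G\le(1-\eps-\tau)\E X_G)$ rather than $\Pr(\cG^c)$ unconditionally, which is essentially as hard as the original problem; as written, the key step is a genuine gap.
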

\begin{theorem}\label{thm:HG:extr}
Let $H$ be a $k$-graph with $e_H \ge 1$. 
If $p=p(n) =o(1)$ and $\eps=\eps(n) \in [0,1]$
satisfy $p = \omega(n^{-1/m_k(H)})$ and $\eps^2 \binom{n}{k}p = \omega(1)$, then we have 
\begin{equation}\label{eq:HG:extr}
\log \Pr(X_{H} \le (1-\eps) \E X_{H}) \sim 
\begin{cases}
		-\varphi(-\eps)\binom{n}{k}p/e_H^2 , & \text{if $\eps=o(1)$,}\\
		-\varphi(-\eps)\binom{n}{k}p (1-\pi_H) , & \text{if $\eps=1-o(1)$.}\\
	\end{cases}
\end{equation}
\end{theorem}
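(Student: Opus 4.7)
I would treat the two regimes $\eps = o(1)$ and $\eps = 1-o(1)$ separately, since they correspond to qualitatively different mechanisms for producing a lower tail deviation of $X_H$ — roughly uniform edge deletion in the first case, and confinement of $G_{n,p}^{(k)}$ inside an extremal $H$-free graph in the second. As a preliminary, the standard computation classifying overlapping pairs of copies of $H$ by their shared subgraph $J\subseteq H$ yields $\Lambda(X_H) \sim e_H^2(\E X_H)^2/\bigl(\binom{n}{k}p\bigr)$ under $p = \omega(n^{-1/m_k(H)})$: pairs sharing a single edge contribute the dominant term, while any $J$ with $e_J \ge 2$ contributes only $\Theta\bigl((\E X_H)^2/\E X_J\bigr)$, which is negligible because the hypothesis on $p$ forces $\E X_J/\bigl(\binom{n}{k}p\bigr) = n^{v_J-k}p^{e_J-1} \to \infty$. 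Feeding this into Janson's inequality \eqref{eq:J} immediately yields the upper bound of \eqref{eq:HG:extr} in the case $\eps = o(1)$.

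For the \emph{lower bound when $\eps = o(1)$}, I would force few copies of $H$ by demanding too few edges overall. Choose $\zeta = 1 - (1-\eps)^{1/e_H} + \eta$ with $\eta$ in the window $\bigl(\binom{n}{k}p\bigr)^{-1/2} \ll \eta \ll \eps$ (possible since $\eps^2\binom{n}{k}p \to \infty$), and consider the event $\cE = \bigl\{|E(G_{n,p}^{(k)})| \le \floor{(1-\zeta)\binom{n}{k}p}\bigr\}$. Since $|E(G_{n,p}^{(k)})| \sim \Bi\bigl(\binom{n}{k},p\bigr)$ satisfies the hypotheses of Theorem~\ref{thm:LT} (with $\delta=0$, $\Pi=p=o(1)$), one gets $\Pr(\cE) \ge \exp\bigl\{-(1+o(1))\varphi(-\zeta)\binom{n}{k}p\bigr\} = \exp\bigl\{-(1+o(1))\varphi(-\eps)\binom{n}{k}p/e_H^2\bigr\}$ after a Taylor expansion of $\varphi$. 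Conditional on $|E(G_{n,p}^{(k)})| = m$ the random graph is uniform $G(n,m)$; for $m \le \floor{(1-\zeta)\binom{n}{k}p}$ this gives $\E[X_H\mid|E|=m] \le (1-\zeta)^{e_H}\E X_H \le (1-\eps-\Theta(\eta))\E X_H$, and Chebyshev using $\Var X_H = O\bigl(\mu^2/(\binom{n}{k}p)\bigr)$ (which transfers to $G(n,m)$ up to constants) shows $\Pr(X_H > (1-\eps)\E X_H \mid |E|=m) = O\bigl(1/(\eta^2\binom{n}{k}p)\bigr) = o(1)$, completing the lower bound.

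For the \emph{lower bound when $\eps = 1 - o(1)$}, I would fix an $H$-free $k$-graph $F \subseteq K_n^{(k)}$ with $\ex(n,H) = (\pi_H+o(1))\binom{n}{k}$ edges; the event $\{G_{n,p}^{(k)} \subseteq F\}$ has probability $(1-p)^{\binom{n}{k}-\ex(n,H)} \ge \exp\bigl\{-(1+o(1))(1-\pi_H)\binom{n}{k}p\bigr\}$ (using $p = o(1)$), and on it $X_H = 0 \le (1-\eps)\E X_H$; since $\varphi(-\eps) \to 1$ as $\eps \to 1$, this matches the claim.

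The main obstacle is the \emph{upper bound when $\eps = 1 - o(1)$}, because Janson's inequality alone is not sharp here: it only produces the exponent $-\binom{n}{k}p/e_H^2$, which can be strictly smaller in absolute value than the target $-\binom{n}{k}p(1-\pi_H)$ (for instance $1/e_H^2 = 1/9$ vs.\ $1-\pi_H = 1/2$ for $H = K_3$). I would instead invoke the classical sharp bound $\log\Pr(X_H = 0) \le -(1-\pi_H+o(1))\binom{n}{k}p$ of Janson, {\L}uczak and Ruci\'nski (valid precisely under our hypothesis on $p$), and combine it with a stability / hypergraph container type argument to reduce the event $\{X_H \le (1-\eps)\E X_H\}$ to configurations contained in an (approximately) extremal $H$-free graph: since $(1-\eps)\E X_H = o(\E X_H)$ would force $G_{n,p}^{(k)}$ to lie within $o\bigl(\binom{n}{k}p\bigr)$ edge modifications of some such extremal graph, the extra probabilistic cost is only $\exp\bigl\{o(\binom{n}{k}p)\bigr\}$, and the JLR bound delivers the claim.
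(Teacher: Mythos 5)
Your treatment of the case $\eps=o(1)$, and your lower bound for $\eps=1-o(1)$, follow essentially the paper's own route: the paper conditions on the number of edges being at most $(1-(1+\tau)\eps/e_H)\binom{n}{k}p$ (Theorem~\ref{thm:rsize2}), bounds the conditional mean exactly as you do and proves the conditional variance bound that you only assert ``transfers to $G(n,m)$ up to constants'', applies one-sided Chebyshev, and then estimates the binomial lower tail via Theorem~\ref{thm:LT}, with Lemma~\ref{lem:varphi3} playing the role of your Taylor expansion of $\varphi$; the extremal $H$-free construction is the lower bound in Lemma~\ref{lem:G:zero}. These parts are fine.

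The genuine gap is your upper bound when $\eps=1-o(1)$. There is no ``classical sharp bound'' $\log\Pr(X_H=0)\le -(1-\pi_H+o(1))\binom{n}{k}p$ due to Janson, {\L}uczak and Ruci{\'n}ski: their result gives only $\Pr(X_H=0)\le\exp\{-c(H)\Phi_H\}$ with a non-optimal constant (essentially the $\eps=1$ case of \eqref{eq:J}, i.e.\ exponent $\binom{n}{k}p/e_H^2$, the very deficiency you point out), and obtaining the constant $1-\pi_H$ is precisely the open problem from \cite{JLR_ineq} that this theorem addresses; as the paper notes, it only became accessible through the hypergraph container theorem of Saxton and Thomason \cite{ST}. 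So your plan inverts the logic: the container theorem is the main input, not a supplementary ``stability'' step patched onto a JLR bound. Two further points you treat as routine are exactly where the work lies. First, the event $\{X_H\le(1-\eps)\E X_H\}$ with $1-\eps=o(1)$ does not by itself force $G^{(k)}_{n,p}$ to be within $o\bigl(\binom{n}{k}p\bigr)$ edge modifications of an extremal $H$-free graph; what one actually uses (after choosing $q=\omega^{-1/e_H}p$ so that $(1-\eps)\E X_H<n^{v_H}q^{e_H}$) is the container statement that any $n$-vertex $k$-graph with so few $H$-copies has its edge set inside one of a controlled family of containers of size at most $(\pi_H+\delta)\binom{n}{k}$. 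Second, the union bound over this family is not free: the number of containers is naively $\exp\{\Theta(qn^k\log(1/q))\}$, which need not be $e^{o(p\binom{n}{k})}$ when $\omega$ grows slowly, so one must exploit the fingerprint property $\bigcup_i T_i\subseteq E(G)$ (each fingerprint edge must be present in $G^{(k)}_{n,p}$, contributing a factor $p$ per edge) together with a weighted union bound, as in \eqref{eq:G:zero:UB1}--\eqref{eq:G:zero:3}; your assertion that ``the extra probabilistic cost is only $\exp\{o(\binom{n}{k}p)\}$'' is unjustified without this. Finally, the trivial case $e_H=1$ lies outside the container theorem and needs the separate binomial argument given at the end of Lemma~\ref{lem:G:zero}.
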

Here our main contributions are the tight lower bound of \eqref{eq:HG:ULO}, 
and the case $\eps=o(1)$ of \eqref{eq:HG:extr}. 
Theorem~\ref{thm:HG:ULO} is a natural extension of earlier work of 
Janson, {\L}uczak and Ruci{\'n}ski~\cite{JLR_ineq} for the special case 
$\eps=1$ (and $k=2$). 
Theorem~\ref{thm:HG:extr} partially solves an open problem of~\cite{JLR_ineq}, 
but in the relevant case $\eps=1$ inequality \eqref{eq:HG:extr} is a 
fairly simple consequence of the recent `hypergraph container' 
results of Saxton and Thomason~\cite{ST}, see also Lemma~\ref{lem:G:zero}. 
With $\varphi(-\eps) = \Theta(\eps^2)$ in mind the conditions involving 
$\eps^2$ are natural in both results -- up to the logarithmic term in 
case of Theorem~\ref{thm:HG:ULO}, which seems to be an artefact of 
our proof (we leave its removal as an open problem, see 
Section~\ref{sec:bstrp:sdcmp}). 
The form of the exponent in Theorem~\ref{thm:HG:extr} differs in an 
intriguing way for $\eps = o(1)$ and $\eps=1-o(1)$. In particular, 
\eqref{eq:HG:extr} provides a natural example where the  
inequality~\eqref{eq:J} does \emph{not} always give the correct 
constants in the exponent when $\delta = \omega(1)$: 
in the case $\eps=1-o(1)$, the `extremal' structural properties of 
$H$-free graphs come into play. We leave it as an open problem to 
determine the finer behaviour of the exponent (i.e., with explicit 
constants) in the `intermediate' range $\eps=\Theta(1)$. 
This seems of particular interest since Theorem~\ref{thm:HG:ULO} 
and~\ref{thm:HG:extr} nearly cover all edge probabilities~$p$ for 
\emph{balanced} $k$-graphs with $e_H \ge 2$ and $m_k(H)=(e_H-1)/(v_H-k)$, 
where $G=H$ for $p = o(n^{-1/m_k(H)})$;
for $k=2$ (when this class usually is called \emph{2-balanced})
this class includes, e.g., 
trees, cycles, complete graphs, complete $r$-partite graphs 
$K_{t,\ldots,t}$ and the $d$-dimensional cube.

Finally, Theorems~\ref{thm:HG:const}--\ref{thm:HG:extr} compare 
favourable with related work for the \emph{upper tail} probability 
$\Pr(X_H \ge (1+\eps) \E X_H)$, where the case $\eps=\Theta(1)$ has 
been extensively studied for $k=2$, see, e.g., \cite{Spencer1990,Vu2001,UTSG,K3TailCh,KkTailDK,MS,CD2014} 
and the references therein. 
Indeed, for most graphs $H$ the order of magnitude of the large 
deviation rate function $\log \Pr(X_H \ge (1+\eps) \E X_H)$ is only 
known up to logarithmic factors when $\eps=\Theta(1)$, whereas 
Theorem~\ref{thm:HG:const} determines $\log \Pr(X_H \le (1-\eps) \E X_H)$ 
up to constant factors, even when $\eps=\eps(n) \to 0$. 
For triangles the finer behaviour of 
$\log \Pr(X_{K_3} \ge (1+\eps) \E X_{K_3})$ 
has very recently been determined for $\eps=\Theta(1)$ and 
$n^{-1/42+o(1)} \le p=o(1)$, see~\cite{LZ2014}.
By contrast, for all balanced $k$-graphs $H$ (which for $k=2$ includes 
$H=K_3$) Theorems~\ref{thm:HG:ULO}--\ref{thm:HG:extr} apply for 
essentially all $p=o(1)$ of interest, excluding only $p=\Theta(n^{-1/m_k(H)})$. 
However, the key conceptual difference is that Theorem~\ref{thm:HG:ULO} 
includes the case $\eps=\eps(n)\to 0$.

The rest of the paper is organized as follows. 
First, in Section~\ref{sec:LT}, we prove Theorem~\ref{thm:LT} 
and~\ref{thm:LT2}. Next, in Section~\ref{sec:bstrp}, we present several 
bootstrapping approaches that yield lower bounds for the lower tail, 
which are subsequently illustrated in Section~\ref{sec:ex}. 
Namely, in Section~\ref{sec:RS} we apply them to the number of 
arithmetic progressions in random subsets of the integers, and in 
Section~\ref{sec:RHG} we apply them to subgraph counts in random 
hypergraphs and prove Theorems~\ref{thm:HG:const}--\ref{thm:HG:extr}.

\section{Lower bounds for the lower tail}\label{sec:LT}
In this section we prove Theorem~\ref{thm:LT} and~\ref{thm:LT2}, i.e., 
establish lower bounds for the lower tail. Since our core argument 
breaks down when $\eps$ is very close to one, en route to 
Theorem~\ref{thm:LT} we establish the following (slightly sharper) 
complementary estimates. 
\begin{theorem}\label{thm:LT3}
Let $X=\sum_{\alpha \in \cX} I_{\alpha}$, $\mu=\E X$, $\Pi$ and 
$\delta$ be defined as in Section~\ref{sec:intro}. 
If $e(1-\eps)\eps^2\mu \ge 1$ and $0 \le \eps \le 1-4\max\{\Pi^{1/4},\delta^{1/4}\}$, then 
\begin{equation}\label{eq:LT3}
\Pr(X < (1-\eps) \E X) \ge \exp\bigl\{-(1+\xi) \varphi(-\eps) \mu \bigr\} , 
\end{equation}
with $\xi = 135 \max\{\Pi^{1/4},\delta^{1/4},[e(1-\eps)\eps^2\mu]^{-1/2}\}$. 
\end{theorem}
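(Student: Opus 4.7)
The plan is to use a Cramér-type exponential tilting combined with correlation inequalities. For $s>0$ set $M(s):=\E e^{-sX}$ and introduce the tilted probability measure $Q$ via $dQ/dP=e^{-sX}/M(s)$, so that
$$
\Pr(X<(1-\eps)\mu)\;=\;M(s)\cdot\E_Q\bigl[e^{sX}\indic{X<(1-\eps)\mu}\bigr].
$$
I would choose $s=-\log(1-\eps)>0$, so that $1-e^{-s}=\eps$ and $e^{-sX}=\prod_\alpha(1-\eps I_\alpha)$. This choice is calibrated to the target: in the Poisson case a direct computation gives $\log M(s)+s(1-\eps)\mu=-\varphi(-\eps)\mu$. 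The first key input is a lower bound on $M(s)$ via Harris' inequality: since $\prod_\alpha(1-\eps I_\alpha)$ is a product of decreasing functions of $\Gamma_{\vp}$, the FKG/Harris inequality gives
$$
M(s)\;\ge\;\prod_\alpha(1-\eps\E I_\alpha)\;\ge\;\exp\Bigl\{-\frac{\eps\mu}{1-\eps\Pi}\Bigr\},
$$
so $\log M(s)\ge -\eps\mu\,(1+O(\eps\Pi))$, absorbing an error of order $\eps^2\mu\Pi$ into the final $\xi$.

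Next I would estimate the tilted expectation by a windowing argument:
$$
\E_Q\bigl[e^{sX}\indic{X<(1-\eps)\mu}\bigr]\;\ge\;e^{s\bigl((1-\eps)\mu-K\sigma_Q\bigr)}\cdot Q\Bigl(X\in\bigl[(1-\eps)\mu-K\sigma_Q,(1-\eps)\mu\bigr)\Bigr),
$$
with $\sigma_Q^2=\Var_Q X$ and $K$ an absolute constant. Here the exponential prefactor loses $sK\sigma_Q\approx K\eps\sqrt{(1-\eps)\mu}$ in the exponent, which divided by the target $\varphi(-\eps)\mu\approx\eps^2\mu/2$ is precisely the $[(1-\eps)\eps^2\mu]^{-1/2}$ term in $\xi$. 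To control $\sigma_Q$ I would use the identities $\E_Q X=-(\log M)'(s)$ and $\Var_Q X=(\log M)''(s)$: sandwiching $\log M$ by the Harris lower bound and a Janson-type upper bound on $M(s)$, $M(2s)$ (from FKG applied to $\prod_\alpha(1-(1-e^{-2s})I_\alpha)$) yields $\E_Q X=(1-\eps)\mu(1+O(\Pi+\delta))$ and $\sigma_Q^2=O((1-\eps)\mu(1+\delta))$.

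The hardest step is proving that the window probability $Q(X\in[(1-\eps)\mu-K\sigma_Q,(1-\eps)\mu))$ is bounded below by a positive absolute constant. Informally, under $Q$ each $I_\alpha$ effectively carries the thinned probability $(1-\eps)\E I_\alpha$, so $X$ under $Q$ should be approximately Poisson (or at least a Janson-type sum) with mean $\approx(1-\eps)\mu$; one should then be able to deduce constant-order anti-concentration in a window of width $\sigma_Q$. Making this rigorous appears to require a Stein-Chen style Poisson approximation with total-variation error of order $(\Pi+\delta)^{1/2}$, or a direct Paley-Zygmund type inequality on $Q$ in which one bounds the second moment of a truncated indicator using FKG/correlation estimates. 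The $\Pi^{1/4}$ and $\delta^{1/4}$ contributions in $\xi$ should emerge from a further Cauchy-Schwarz applied to the off-diagonal correlation terms (for instance $\sum_{\alpha\sim\beta}\E_Q I_\alpha I_\beta$) that control these approximations.

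Combining everything produces $\log\Pr(X<(1-\eps)\mu)\ge -(1+\xi)\varphi(-\eps)\mu$. The restriction $\eps\le 1-4\max\{\Pi^{1/4},\delta^{1/4}\}$ is there precisely so that $a_s\Pi$ and $a_s^2\delta$ (with $a_s=\eps$) remain small in the Harris and Janson bounds on $M(s)$ and $M(2s)$, while the hypothesis $e(1-\eps)\eps^2\mu\ge 1$ keeps the window half-width $K\sigma_Q$ genuinely smaller than the deviation $\eps\mu$ so the exponential prefactor $e^{-sK\sigma_Q}$ contributes at most a controlled $(1+O(\xi))$ multiplicative factor to the exponent.
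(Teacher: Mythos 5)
The central step of your argument --- the lower bound $Q\bigl(X\in[(1-\eps)\mu-K\sigma_Q,(1-\eps)\mu)\bigr)=\Omega(1)$ under the tilted measure --- is exactly the point where the proposal has a genuine gap, and you acknowledge as much ("appears to require a Stein--Chen style Poisson approximation \dots or a direct Paley--Zygmund type inequality") without supplying either. This is not a routine fill-in. First, $Q$ is obtained by reweighting the product measure on $\Gamma_{\vp}$ by the decreasing functional $e^{-sX}$, so it is no longer a product measure, and the standard Chen--Stein/Janson-type machinery (whose error terms are expressed through $\Pi$ and $\delta$ computed under $\Pr$) does not transfer to $Q$ without substantial new work; no anti-concentration statement with errors of the advertised form $\Pi^{1/4},\delta^{1/4}$ is available off the shelf. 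Second, because you tilt exactly at $s=z=-\log(1-\eps)$, the target level $(1-\eps)\mu$ sits essentially \emph{at} the $Q$-mean, but the mean is only pinned down up to a relative error $O(\Pi+\delta)$, i.e.\ up to an absolute shift of order $(\Pi+\delta)\eps\mu$, which can vastly exceed the window width $\sigma_Q=O(\sqrt{(1+\delta)\mu})$. If the $Q$-mean lands above $(1-\eps)\mu$ by many multiples of $\sigma_Q$, your window lies in the lower tail of $Q$ and its probability is not of constant order; quantifying it then requires precisely the kind of lower-tail lower bound one is trying to prove, so second-moment tools (Chebyshev/Cantelli give only upper bounds on tail probabilities) cannot close the loop.

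It is instructive to compare with how the paper sidesteps this. There the tilt is deliberately taken slightly \emph{beyond} $z$, namely $s=pz$ with $p=1+\sigma>1$, so that under the tilted law the event $\{X\ge(1-\eps)\mu\}$ is an \emph{upward} deviation by a definite margin; its probability is then killed by a further exponential-Markov bound using only Laplace-transform estimates (Lemma~\ref{lem:negl}, combining Lemma~\ref{lem:JL} and Lemma~\ref{lem:JU}), so no anti-concentration, local limit theorem, or Poisson approximation under $Q$ is ever needed. The price of overshooting the tilt is paid through H\"older's inequality with exponent $p\to1$ (inequality \eqref{eq:LTH} and Lemma~\ref{lem:lower}), and optimizing $\sigma$ produces the error term $\xi$. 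If you want to rescue your scheme, the natural fix is the same: replace the exact tilt $s=z$ by $s=(1+\sigma)z$ so that the window argument is replaced by a one-sided large-deviation estimate under $Q$ (which you can get from the same Laplace-transform bounds you already invoke), and account for the overshoot multiplicatively; at that point you have essentially reconstructed the paper's proof. As written, however, the proposal does not constitute a proof.
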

\begin{lemma}\label{lem:LT4}
Let $X=\sum_{\alpha \in \cX} I_{\alpha}$, $\mu=\E X$ and $\Pi$ be 
defined as in Section~\ref{sec:intro}. 
If $1-e^{-1} \le \eps \le 1$ and $\Pi < 1$, then 
\begin{equation}\label{eq:LT4}
\Pr(X \le (1-\eps)\E X) \ge \Pr(X = 0) \ge \exp\bigl\{-(1+\zeta) \varphi(-\eps) \mu \bigr\} ,
\end{equation}
with $\zeta = 10 \max\{\sqrt{1-\eps},\Pi/(1-\Pi)\}$. 
\end{lemma}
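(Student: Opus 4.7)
The first inequality is immediate: since $\eps \le 1$, we have $(1-\eps)\E X \ge 0$, so the event $\{X=0\}$ is contained in $\{X \le (1-\eps)\E X\}$. For the second inequality, the plan is to invoke Harris' inequality \eqref{eq:H}, which already yields $\Pr(X=0) \ge \exp\{-\mu/(1-\Pi)\}$; it therefore suffices to establish the deterministic inequality
\[
(1+\zeta)\,\varphi(-\eps)\,(1-\Pi) \ge 1
\]
for all $\eps\in[1-e^{-1},1]$ and $\Pi\in[0,1)$, with $\zeta = 10\max\{\sqrt{1-\eps},\Pi/(1-\Pi)\}$. In other words, once Harris is applied, the entire content of the lemma is a one-off numerical optimization.

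To handle this optimization, I would introduce $s := \sqrt{1-\eps}\in[0,e^{-1/2}]$ so that $\varphi(-\eps) = 1-s^{2}(1-2\log s)$, and split on whether $\Pi \le s/(1+s)$ (which is precisely the threshold for the maximum defining $\zeta$). In the first regime $\zeta=10s$, and using $1-\Pi\ge 1/(1+s)$ reduces the claim to $(1+10s)\varphi(-\eps) \ge 1+s$. In the second regime $\zeta=10\Pi/(1-\Pi)$, so $(1+\zeta)(1-\Pi) = 1+9\Pi \ge (1+10s)/(1+s)$, and the claim once again reduces to the same inequality $(1+10s)\varphi(-\eps) \ge 1+s$. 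After substituting the formula for $\varphi(-\eps)$, both cases collapse to the single one-variable statement
\[
F(s) := s\,(1-2\log s)\,(1+10s) \le 9 \qquad \text{on } s \in [0,e^{-1/2}].
\]

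The remaining step is routine calculus. I would differentiate to get $F'(s) = -1 - 2(\log s)(1+20s)$, observe that $-2\log s \ge 1$ and $1+20s \ge 1$ throughout $s\in(0,e^{-1/2}]$, so $F'(s) > 0$ on the interval, and hence $F$ attains its maximum at the right endpoint $s=e^{-1/2}$, where $F(e^{-1/2}) = 2e^{-1/2}+20e^{-1}$, which is strictly less than $9$.

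The main ``obstacle'' here is merely that the constant $10$ in the definition of $\zeta$ must be large enough for this boundary evaluation to fall under $9$; conceptually the proof is a direct application of Harris followed by a numerical check, and the explicit value $10$ in the statement is essentially dictated by it. This lemma is what one needs to patch the argument of Theorem~\ref{thm:LT3} (which requires $\eps$ bounded away from $1$) into the full range of Theorem~\ref{thm:LT}.
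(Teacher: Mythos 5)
Your proposal is correct and follows essentially the same route as the paper: the first inequality is trivial, Harris' inequality \eqref{eq:H} gives $\Pr(X=0)\ge e^{-\mu/(1-\Pi)}$, and the lemma reduces to the deterministic bound $1/(1-\Pi)\le(1+\zeta)\varphi(-\eps)$, which the paper verifies via Lemma~\ref{lem:varphi2} (i.e.\ $1\le(1+5\sqrt{1-\eps})\varphi(-\eps)$) combined with $5\sqrt{1-\eps}\le 4$. Your case split on which term attains the maximum in $\zeta$ and the single-variable calculus check $s(1-2\log s)(1+10s)\le 2e^{-1/2}+20e^{-1}<9$ on $(0,e^{-1/2}]$ is just a slightly different (and correct) packaging of the same elementary calculus step.
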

While Lemma~\ref{lem:LT4} follows from~\eqref{eq:H} via calculus 
(see Lemma~\ref{lem:varphi2}), the remaining proofs are not a mere 
refinement of~\cite{Janson}, but contain several new ideas and 
ingredients. This includes integrating the logarithmic derivative of 
the Laplace transform over the interval $[r,t]$ instead of the usual 
$[0,t]$ (see the proof of Lemma~\ref{lem:JU}), using H{\" o}lder's 
inequality with parameter $p \to 1$ instead of the Cauchy--Schwarz 
inequality (see Section~\ref{sec:prf:str}), and a careful treatment 
of second order error terms (see, e.g., Lemma~\ref{lem:JL} and~\ref{lem:negl}).

\subsection{Preliminaries}
We first collect some basic estimates of the Laplace transform of~$X$ 
as defined in Section~\ref{sec:intro}. 
\begin{lemma}\label{lem:JL}
For all $s \ge 0$ satisfying $\lambda = \Pi (1-e^{-s}) < 1$ we have
\begin{equation}\label{eq:JL}
\E e^{-sX} \ge \exp\left\{-\mu (1-e^{-s})-\frac{\mu \Pi (1-e^{-s})^2}{2(1-\lambda)}\right\} . 
\end{equation}
\end{lemma}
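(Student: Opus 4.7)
The plan is to apply Harris's (FKG) inequality to the functions $e^{-sI_\alpha}$. Each indicator $I_{\alpha}=\indic{Q(\alpha)\subseteq\Gamma_{\vp}}$ is an increasing function of the random set $\Gamma_{\vp}$, so $e^{-sI_\alpha}$ is a decreasing function of $\Gamma_{\vp}$ for every $s\ge 0$. Since Harris's inequality gives that the expectation of a product of decreasing functions dominates the product of expectations, I obtain
\begin{equation*}
\E e^{-sX} \;=\; \E\prod_{\alpha\in\cX} e^{-sI_\alpha} \;\ge\; \prod_{\alpha\in\cX}\E e^{-sI_\alpha} \;=\; \prod_{\alpha\in\cX}\bigl(1-(1-e^{-s})\E I_\alpha\bigr),
\end{equation*}
using $\E e^{-sI_\alpha}=1-(1-e^{-s})\E I_\alpha$ for a Bernoulli variable.

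Next I would take logarithms and estimate each factor. Writing $a=1-e^{-s}$ and $q_\alpha=\E I_\alpha$, every $q_\alpha\le \Pi$ gives $aq_\alpha\le \lambda<1$. I then invoke the elementary inequality
\begin{equation*}
\log(1-x)\;\ge\; -x-\frac{x^2}{2(1-x)}\qquad(x\in[0,1)),
\end{equation*}
which follows from $-\log(1-x)=\sum_{k\ge1}x^k/k\le x+\sum_{k\ge 2}x^k/2 = x+\tfrac{x^2}{2(1-x)}$. Applied with $x=aq_\alpha$ and the bound $1-aq_\alpha\ge 1-\lambda$, this yields
\begin{equation*}
\log\bigl(1-aq_\alpha\bigr)\;\ge\;-aq_\alpha-\frac{a^2 q_\alpha^2}{2(1-\lambda)}.
\end{equation*}

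Summing over $\alpha\in\cX$ and using $\sum_\alpha q_\alpha=\mu$ together with $\sum_\alpha q_\alpha^2\le \Pi\sum_\alpha q_\alpha=\Pi\mu$, I conclude
\begin{equation*}
\log\E e^{-sX}\;\ge\;-a\mu-\frac{a^2\Pi\mu}{2(1-\lambda)}\;=\;-\mu(1-e^{-s})-\frac{\mu\Pi(1-e^{-s})^2}{2(1-\lambda)},
\end{equation*}
and exponentiating gives \eqref{eq:JL}. There is no real obstacle here; the argument is short and essentially forced once one decides to use Harris on the decreasing functions $e^{-sI_\alpha}$. The only mild subtlety is choosing a second-order lower bound for $\log(1-x)$ whose error term is controlled uniformly by $\Pi$ (rather than the individual $q_\alpha$), which is what produces the clean factor $1/(1-\lambda)$ in the statement.
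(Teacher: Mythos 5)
Your proof is correct and follows essentially the same route as the paper: Harris/FKG applied to the decreasing functions $e^{-sI_\alpha}$, followed by the bound $\log(1-x)\ge -x-\tfrac{x^2}{2(1-x)}$ with the error term controlled uniformly via $\E I_\alpha\le\Pi$ (so $1-x\ge 1-\lambda$) and the sums $\sum_\alpha \E I_\alpha=\mu$, $\sum_\alpha(\E I_\alpha)^2\le\Pi\mu$. No gaps; this matches the paper's argument step for step.
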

\begin{proof}
The FKG inequality~\cite{FKG} (or Harris's inequality \cite{Harris1960})
yields  
\[
\E e^{-sX} = \E \prod_{\alpha \in \cX }e^{-sI_{\alpha}} \ge \prod_{\alpha \in \cX } \E e^{-sI_{\alpha}} = \prod_{\alpha \in \cX } \bigl(1-\E I_{\alpha} (1-e^{-s})\bigr) .
\]
Now, for $x \in [0,1)$ we have 
\begin{equation}\label{eq:taylor:log}
\log(1-x)=-\sum_{j \ge 1} \frac{x^j}{j} \ge -x-\frac{x^2}{2(1-x)}, 
\end{equation}
and \eqref{eq:JL} follows since $\E I_{\alpha} \le \Pi$ and $\mu = \sum_{\alpha \in \cX} \E I_{\alpha}$. 
\end{proof}
\begin{lemma}\label{lem:JU}
For all $t \ge r \ge 0$ we have 
\begin{equation}\label{eq:JU}
\frac{\E e^{-rX}}{\E e^{-tX}} \ge \exp\left\{\frac{\mu}{1+\delta}\left(e^{-(1+\delta)r}-e^{-(1+\delta)t}\right)\right\} .
\end{equation}
\end{lemma}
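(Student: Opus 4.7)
The plan is to reduce \eqref{eq:JU} to the pointwise derivative bound
\[
\frac{\E[Xe^{-sX}]}{\E[e^{-sX}]} \;\ge\; \mu\, e^{-(1+\delta)s} \qquad\text{for all } s \ge 0, \qquad (\star)
\]
and then integrate over $[r,t]$. The reduction is immediate from the identity
\[
\log\frac{\E e^{-rX}}{\E e^{-tX}} \;=\; -\int_r^t \frac{d}{ds}\log\E e^{-sX}\,ds \;=\; \int_r^t \frac{\E[Xe^{-sX}]}{\E[e^{-sX}]}\,ds,
\]
since $(\star)$ combined with $\int_r^t \mu\, e^{-(1+\delta)s}\,ds = \frac{\mu}{1+\delta}\bigl(e^{-(1+\delta)r}-e^{-(1+\delta)t}\bigr)$ yields exactly \eqref{eq:JU}. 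The derivative bound $(\star)$ is essentially the key step in Janson's original proof of \eqref{eq:J}; as flagged in the introduction to Section~\ref{sec:LT}, the only novelty here is that the integration is now performed on $[r,t]$ rather than the usual $[0,t]$.

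For $(\star)$ I would work term by term in $\E[Xe^{-sX}] = \sum_{\alpha\in\cX}\E[I_\alpha e^{-sX}]$. Fix $\alpha$ and split $X = X_\alpha^* + Y_\alpha$ with $X_\alpha^* = I_\alpha + \sum_{\beta\sim\alpha}I_\beta$ and $Y_\alpha = \sum_{\beta\neq\alpha,\,\beta\not\sim\alpha}I_\beta$. The structural point is that $Y_\alpha$ is determined by the Bernoulli variables $\indic{i \in \Gamma_\vp}$ with $i \notin Q(\alpha)$, and so is independent of $I_\alpha$; and conditionally on $I_\alpha=1$ (i.e., on the event $\indic{i \in \Gamma_\vp}=1$ for all $i\in Q(\alpha)$) both $X_\alpha^*$ and $Y_\alpha$ become nondecreasing functions of the remaining (still independent) Bernoullis $\indic{i \in \Gamma_\vp}$, $i\notin Q(\alpha)$. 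Applying Harris's inequality on this conditional product space to the two nonincreasing functions $e^{-sX_\alpha^*}$ and $e^{-sY_\alpha}$, together with the independence step $\E[e^{-sY_\alpha}\mid I_\alpha=1] = \E[e^{-sY_\alpha}]$, gives
\[
\E[I_\alpha e^{-sX}] \;=\; \E I_\alpha \cdot \E\bigl[e^{-sX_\alpha^*}e^{-sY_\alpha}\bigm| I_\alpha=1\bigr] \;\ge\; \E I_\alpha \cdot \E\bigl[e^{-sX_\alpha^*}\bigm| I_\alpha=1\bigr]\cdot \E[e^{-sY_\alpha}].
\]
Combined with the trivial $\E[e^{-sX}] \le \E[e^{-sY_\alpha}]$ (from $X \ge Y_\alpha$) and Jensen's inequality $\E[e^{-sX_\alpha^*}\mid I_\alpha=1] \ge e^{-s\,\E[X_\alpha^*\mid I_\alpha=1]}$, this yields
\[
\frac{\E[I_\alpha e^{-sX}]}{\E[e^{-sX}]} \;\ge\; \E I_\alpha \cdot \exp\bigl\{-s\,\E[X_\alpha^*\mid I_\alpha=1]\bigr\}.
\]

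Finally, summing over $\alpha$ and applying Jensen once more, now to $y \mapsto e^{-sy}$ with weights $\E I_\alpha/\mu$, gives
\[
\sum_{\alpha}\E I_\alpha\cdot e^{-s\,\E[X_\alpha^*\mid I_\alpha=1]} \;\ge\; \mu\exp\Bigl\{-\tfrac{s}{\mu}\sum_{\alpha}\E I_\alpha\cdot\E[X_\alpha^*\mid I_\alpha=1]\Bigr\} \;=\; \mu\, e^{-(1+\delta)s},
\]
where the last step uses the direct computation $\sum_{\alpha}\E I_\alpha\cdot\E[X_\alpha^*\mid I_\alpha=1] = \sum_{\alpha}\E[I_\alpha X_\alpha^*] = \mu + \sum_{\alpha\sim\beta}\E[I_\alpha I_\beta] = \Lambda = \mu(1+\delta)$. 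This establishes $(\star)$, and integration over $[r,t]$ yields \eqref{eq:JU}. The main obstacle is the correlation-inequality step: one must exploit \emph{both} the unconditional independence of $Y_\alpha$ from $I_\alpha$ \emph{and} the comonotonicity of $X_\alpha^*$ and $Y_\alpha$ in the remaining Bernoullis after freezing $\indic{i \in \Gamma_\vp}=1$ on $Q(\alpha)$, since a naive application of Harris's inequality to the unconditioned product of the increasing functional $I_\alpha$ with the decreasing functional $e^{-sX}$ would point in the wrong direction.
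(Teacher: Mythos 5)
Your proposal is correct and follows essentially the same route as the paper: the paper's proof also reduces \eqref{eq:JU} to the bound $-\frac{d}{dx}\log \E e^{-xX} \ge \mu e^{-(1+\delta)x}$ and integrates it over $[r,t]$, the only difference being that the paper cites this derivative bound from Lemma~1 of~\cite{Janson} whereas you re-derive it. Your re-derivation (conditioning on $I_\alpha=1$, splitting off the independent part $Y_\alpha$, Harris on the conditional product space, and two applications of Jensen giving the exponent $\Lambda/\mu = 1+\delta$) is exactly the standard argument behind that cited lemma and is sound.
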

\begin{proof}
Let $\Psi(x)= \E e^{-xX}$. The proof of Lemma~1 in~\cite{Janson} 
establishes $-\frac{d}{dx} \log \Psi(x) \ge \mu e^{-(1+\delta)x}$ for 
$x \ge 0$ (see also~\cite{RiordanWarnke2012J}). 
Hence 
\begin{equation*}\label{eq:JU:int}
\begin{split}
\log\left(\frac{\E e^{-rX}}{\E e^{-tX}}\right) & = -\log \Psi(t) + \log \Psi(r) = \int_{r}^{t} \left(-\frac{d}{dx} \log \Psi(x)\right) dx \\
&\ge \int_{r}^{t} \mu e^{-(1+\delta)x} dx = \frac{\mu}{1+\delta}\left(e^{-(1+\delta)r}-e^{-(1+\delta)t}\right) , 
\end{split}
\end{equation*}
and \eqref{eq:JU} follows. 
\end{proof}

Next, we state some technical estimates of $\varphi(-\eps)=(1-\eps)\log(1-\eps)+\eps$ 
for later reference (these can safely be skipped on first reading). 
Following standard conventions, for $k \in \{1,2\}$ we have 
$0\log^k(0)=\lim_{\eps \nearrow 1}(1-\eps)\log^k(1-\eps)=0$, so that $\varphi(-1)=1$. 
\begin{lemma}\label{lem:varphi}
For all $\eps \in [0,1]$ we have 
\begin{equation}\label{eq:varphi}
\max\bigl\{(1-\eps) \log^2(1-\eps),\eps^2\bigr\} \le 2 \varphi(-\eps) \le \min\bigl\{\log^2(1-\eps),2\eps^2\bigr\} .
\end{equation}
\end{lemma}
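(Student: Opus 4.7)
The plan is to prove the four inequalities separately, grouping them by what $\varphi(-\eps)$ is being compared to: the two bounds involving $\eps^2$, and the two bounds involving $\log^2(1-\eps)$.

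For the $\eps^2$-bounds $\eps^2\le 2\varphi(-\eps)\le 2\eps^2$, my approach is to derive the power series
\[
\varphi(-\eps) \;=\; \sum_{k\ge 2}\frac{\eps^k}{k(k-1)},
\]
which falls out by combining $\log(1-\eps)=-\sum_{k\ge 1}\eps^k/k$ with the factor $(1-\eps)$ and telescoping (the $\eps$ and $-\eps$ terms cancel; rearranging the remaining sums yields the coefficient $\tfrac{1}{k-1}-\tfrac{1}{k}$). Since every term is non-negative and the partial fractions telescope to $\sum_{k\ge 2}1/[k(k-1)]=1$, one immediately reads off $\eps^2/2\le \varphi(-\eps)\le \eps^2$ for all $\eps\in[0,1]$, disposing of both $\eps^2$-bounds.

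For the $\log^2(1-\eps)$-bounds I would substitute $u=-\log(1-\eps)\in[0,\infty]$, so $1-\eps=e^{-u}$, and compute
\[
\varphi(-\eps)=1-e^{-u}(1+u),\qquad \log^2(1-\eps)=u^2,\qquad (1-\eps)\log^2(1-\eps)=u^2 e^{-u}.
\]
The upper bound $2\varphi(-\eps)\le\log^2(1-\eps)$ then becomes $h(u):=u^2+2e^{-u}(1+u)-2\ge 0$, which follows from $h(0)=0$ together with $h'(u)=2u\bigl(1-e^{-u}\bigr)\ge 0$. The lower bound $(1-\eps)\log^2(1-\eps)\le 2\varphi(-\eps)$ becomes, after multiplying by $e^u$, the inequality $u^2+2u+2\le 2e^u$; this is immediate from $2e^u=\sum_{k\ge 0}2u^k/k!\ge 2+2u+u^2$.

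No step is really delicate; the only care required is at the endpoint $\eps=1$ (equivalently $u=\infty$), where the stated conventions $0\log^k 0=0$ ensure the correct limits $\varphi(-1)=1$, $(1-\eps)\log^2(1-\eps)\to 0$, and $\log^2(1-\eps)\to\infty$, so all four inequalities extend continuously to the boundary. The whole proof is thus a short power-series expansion plus one elementary calculus check, and I would expect the write-up to fit comfortably into a few lines.
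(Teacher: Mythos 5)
Your proposal is correct, and all four inequalities check out: the series identity $\varphi(-\eps)=\sum_{k\ge 2}\eps^k/[k(k-1)]$ is right (and at $\eps=1$ it is consistent with the convention $\varphi(-1)=1$, since the telescoping sum equals $1$), the term-by-term comparisons give $\eps^2/2\le\varphi(-\eps)\le\eps^2$ because $\eps^k\le\eps^2$ for $k\ge2$ and $\eps\in[0,1]$, and after the substitution $u=-\log(1-\eps)$ your two reductions, $h(u)=u^2+2e^{-u}(1+u)-2\ge0$ via $h(0)=0$, $h'(u)=2u(1-e^{-u})\ge0$, and $u^2+2u+2\le 2e^u$ via Taylor truncation, are both valid on $[0,\infty)$, with the endpoint $\eps=1$ covered by the stated conventions. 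The route is genuinely different from the paper's: the paper stays in the variable $\eps$ throughout and proves each of the three nontrivial bounds by checking the sign of the derivative of an auxiliary function (e.g.\ $f(x)=2\varphi(-x)-(1-x)\log^2(1-x)$ with $f'(x)=\log^2(1-x)\ge0$, and similarly for $g(x)=2\varphi(-x)-x^2$ and $h(x)=\log^2(1-x)-2\varphi(-x)$), obtaining the last bound $\varphi(-\eps)\le\eps^2$ directly from $1-\eps\le e^{-\eps}$. Your approach buys a little more: the power series dispatches both $\eps^2$-bounds in one stroke and also exhibits the refinement $\varphi(-\eps)=\eps^2/2+O(\eps^3)$ quoted in the introduction, while the exponential substitution turns the $\log^2(1-\eps)$-bounds into transparent polynomial-versus-exponential comparisons, one of which needs no calculus at all. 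What the paper's method buys in return is uniformity and brevity in the original variable: four one-line monotonicity or comparison checks, no change of variables and no series manipulation, which also makes the $\eps=1$ endpoint discussion entirely trivial.
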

\begin{lemma}\label{lem:varphi2}
For all $1-e^{-1} \le \eps \le 1$ we have 
\begin{equation}\label{eq:varphi2}
\varphi(-\eps) \le 1 \le (1+5 \sqrt{1-\eps}) \varphi(-\eps) .
\end{equation} 
\end{lemma}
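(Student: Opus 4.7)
The plan is to treat the two inequalities of Lemma~\ref{lem:varphi2} separately, reducing each to a short single-variable calculus exercise for $\varphi(-\eps) = (1-\eps)\log(1-\eps)+\eps$ on $\eps \in [1-e^{-1},1]$. The upper bound $\varphi(-\eps) \le 1$ is immediate from $\tfrac{d}{d\eps}\varphi(-\eps) = -\log(1-\eps) \ge 0$ on $[0,1)$, which says $\varphi(-\eps)$ is nondecreasing in $\eps$, combined with $\varphi(-1)=1$ (per the limit conventions fixed before the lemma).

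For the lower bound $1 \le (1+5\sqrt{1-\eps})\,\varphi(-\eps)$, I would rewrite it as $1-\varphi(-\eps) \le 5\sqrt{1-\eps}\,\varphi(-\eps)$, substitute $u := 1-\eps \in (0,e^{-1}]$, and use the identity $1-\varphi(-\eps) = u\,(1-\log u)$ to reduce (after dividing by $\sqrt{u}$) to
\begin{equation*}
\sqrt{u}\,(1-\log u) \,\le\, 5\,\varphi(-\eps) \qquad \text{for } u \in (0,e^{-1}].
\end{equation*}
By the monotonicity already established, $\varphi(-\eps) \ge \varphi(-(1-e^{-1})) = 1-2/e$ on the relevant range, so it suffices to verify the $\eps$-free inequality $\sqrt{u}(1-\log u) \le 5(1-2/e)$ on $u\in(0,e^{-1}]$. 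A short differentiation gives $\tfrac{d}{du}\bigl[\sqrt{u}(1-\log u)\bigr] = -(1+\log u)/(2\sqrt{u})$, which is positive on $(0,e^{-1})$ and vanishes at $u=e^{-1}$; hence the maximum of $\sqrt{u}(1-\log u)$ on this interval is attained at the right endpoint and equals $2/\sqrt{e}$. The proof therefore reduces to the numerical check $2/\sqrt{e} \le 5(1-2/e)$.

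The only genuinely delicate step is this final numerical inequality: the constant $5$ in the statement leaves only a small margin (about $5(1-2/e)-2/\sqrt{e}\approx 0.11$), so I would use the exact maximum of $\sqrt{u}(1-\log u)$ rather than any cruder envelope (for instance $\sqrt{u}+1/\sqrt{u}$, obtained from $1-\log u \le 1 + 1/u$, would be too weak). Otherwise the argument is entirely elementary---no Laplace transform or correlation-inequality input is required---which is consistent with the remark preceding the lemma that Lemma~\ref{lem:LT4} will be deduced by combining~\eqref{eq:H} with Lemma~\ref{lem:varphi2}.
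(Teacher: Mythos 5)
Your proposal is correct and takes essentially the same route as the paper's proof: the upper bound via monotonicity of $\varphi$ together with $\varphi(-1)=1$, and for the lower bound the same auxiliary function $\sqrt{u}\,(1-\log u)=\sqrt{u}\log(e/u)$ maximized at $u=e^{-1}$ (value $2/\sqrt{e}$) played against the same lower bound $\varphi(-\eps)\ge (e-2)/e$, reducing to the identical numerical check $2\sqrt{e}\le 5(e-2)$. The only cosmetic difference is that the paper keeps the factor $\sqrt{1-\eps}$ explicit until the last step, whereas you replace both sides by their extreme values over the range at once.
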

\begin{lemma}\label{lem:varphi3}
For all $\eps \in [0,1]$ and $A \in [0,\infty)$ we have, with $\gamma=A-1$, 
\begin{equation}\label{eq:varphi3}
\varphi(-A\eps) \le \begin{cases}
		(1+A\eps)A^2 \varphi(-\eps), & \text{if $A\eps \le 1$,}\\
		(1+\sqrt{\gamma}) \varphi(-\eps), & \text{if $0 \le 3\sqrt{\gamma} \le 1-\eps$.}
	\end{cases}
\end{equation} 
\end{lemma}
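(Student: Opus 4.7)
The plan is to combine the series $\varphi(-y) = \sum_{k \ge 2} y^k/(k(k-1))$ (valid for $y \in [0,1]$) with the standard bound $\varphi(-\eps) \ge \eps^2/2$ from Lemma~\ref{lem:varphi}, treating the two cases separately.

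For the first case ($A\eps \le 1$), I would split on whether $A \le 1$ or $A \ge 1$. The subcase $A \le 1$ is immediate: $A^k \le A^2$ for $k \ge 2$, so term-by-term comparison of the series yields $\varphi(-A\eps) \le A^2 \varphi(-\eps)$, which is stronger than needed. In the subcase $A \ge 1$, I would write
\[
\varphi(-A\eps) - A^2\varphi(-\eps) \;=\; A^2 \sum_{k \ge 3}(A^{k-2}-1)\eps^k/(k(k-1)) \;\le\; A^2\eps^2 \sum_{k \ge 3}(A\eps)^{k-2}/(k(k-1)),
\]
then use $A\eps \le 1$ to bound $(A\eps)^{k-2} \le A\eps$ for $k \ge 3$, and invoke the telescoping identity $\sum_{k \ge 3} 1/(k(k-1)) = 1/2$ to deduce $\varphi(-A\eps) - A^2\varphi(-\eps) \le A^3\eps^3/2 \le A^3\eps\,\varphi(-\eps)$, giving the desired bound $\varphi(-A\eps) \le (1+A\eps)A^2 \varphi(-\eps)$.

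For the second case ($3\sqrt{\gamma} \le 1-\eps$), I would apply second-order Taylor to $g(y) = \varphi(-y)$ at $y = \eps$. Since $g'(y) = -\log(1-y)$ and $g''(y) = 1/(1-y)$ is increasing, the Lagrange remainder yields
\[
g(A\eps) - g(\eps) \;\le\; -\gamma\eps \log(1-\eps) + (\gamma\eps)^2/\bigl(2(1-A\eps)\bigr).
\]
The crucial bookkeeping is that the hypothesis forces $\sqrt{\gamma} \le 1/3$, so $\gamma \le \sqrt{\gamma}/3$, whence $1 - A\eps = (1-\eps) - \gamma\eps \ge 3\sqrt{\gamma} - \gamma \ge (8/3)\sqrt{\gamma} \ge 2\sqrt{\gamma}$; the remainder is then at most $\gamma^{3/2}\eps^2/4$. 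Dividing the target inequality by $\sqrt{\gamma}$ reduces matters to showing $-\sqrt{\gamma}\,\eps\log(1-\eps) + \gamma\eps^2/4 \le \varphi(-\eps)$. Here I would use the elementary bound $-\log(1-\eps) \le \eps/(1-\eps)$ together with $\sqrt{\gamma} \le (1-\eps)/3$ and $\gamma \le 1/9$ to bound the left-hand side by $\eps^2/3 + \eps^2/36 = 13\eps^2/36 \le \eps^2/2 \le \varphi(-\eps)$.

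The calculations are routine; the only mild subtleties are (i) that the cubic surplus $A^3\eps^3/2$ in the first case absorbs exactly into $A^3\eps \cdot \varphi(-\eps)$ thanks to the sharp lower bound $\varphi(-\eps) \ge \eps^2/2$, and (ii) that the hypothesis in the second case pins $1 - A\eps$ to order $\sqrt{\gamma}$, which is precisely what produces the asymmetric $\sqrt{\gamma}$ savings advertised in the statement.
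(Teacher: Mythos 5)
Your proof is correct and follows essentially the same route as the paper: both cases rest on expanding $\varphi$ around the relevant point (a term-by-term series comparison, resp.\ a second-order Taylor expansion of $\varphi(-y)$ at $y=\eps$ with the increasing second derivative $1/(1-y)$), combined with the key lower bound $\varphi(-\eps)\ge\eps^2/2$ from Lemma~\ref{lem:varphi}. The only cosmetic difference is in the first case, where the paper uses the closed-form bound $\varphi(-y)\le(1+y)y^2/2$ (obtained from $\log(1-x)\le -x-x^2/2$) rather than your series comparison, and in the second case the bookkeeping of the error terms (you divide by $\sqrt{\gamma}$, the paper converts each term into a multiple of $\varphi(-\eps)$), while the trivial case $\gamma=0$ should be noted separately before dividing by $\sqrt{\gamma}$, just as the paper dispenses with it at the outset.
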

The elementary proofs of Lemma~\ref{lem:varphi}--\ref{lem:varphi3} are 
deferred to Appendix~\ref{apx}.

\subsection{Proof strategy}\label{sec:prf:str}
We start with a general lower bound for $\Pr(X < (1-\eps) \E X)$. 
If $p,q \in (1,\infty)$ satisfy $1/p+1/q=1$, then H{\" o}lder's inequality implies 
\[
\E(e^{-sX} \indic{X < (1-\eps) \E X}) \le (\E e^{-psX})^{1/p} \Pr(X < (1-\eps) \E X)^{1/q} .
\] 
Noting that $q=q/p+1=1/(p-1)+1$, we infer 
\begin{equation}\label{eq:LTH}
\begin{split}
\Pr(X < (1-\eps) \E X) & \ge \left(\frac{\E(e^{-sX} \indic{X < (1-\eps) \E X})}{ (\E e^{-psX})^{1/p}}\right)^{q} \\
& = \left(\frac{\E(e^{-sX} \indic{X < (1-\eps)\E X})}{\E e^{-sX} }\right)^{\frac{p}{p-1}} \cdot \left(\frac{\E e^{-sX} }{\E e^{-psX} }\right)^{\frac{1}{p-1}} \E e^{-sX} .
\end{split}
\end{equation}

In the following we heuristically outline how we estimate 
$\Pr(X < (1-\eps) \E X)$ when $\delta,\Pi \to 0$ and $\eps<1$ (to be 
precise, $\eps$ bounded away from one). The idea is to first consider 
$p>1$ and $s > z=-\log(1-\eps)$, and then let $p \to 1$ and $s \to z$. 
Since $\Pi \to 0$, using Lemma~\ref{lem:JL} we have 
\begin{equation}\label{eq:heur:JL}
\E e^{-sX} \ge \exp\Bigl\{-\mu\bigl(1-e^{-s}+o(1)\bigr)\Bigr\} .
\end{equation}
So, using Lemma~\ref{lem:JU} together with $\delta \to 0$, we expect 
that (replacing the difference quotient by the derivative), as $p \to 1$, 
\begin{equation}\label{eq:heur:JU}
\begin{split}
\left(\frac{\E e^{-sX} }{\E e^{-psX} }\right)^{\frac{1}{p-1}} & \ge \exp\Bigl\{\mu s \left(\frac{e^{-(1+\delta)s}-e^{-(1+\delta)ps}}{(1+\delta)(p-1)s}\right)\Bigr\} 
= \exp\Bigl\{\mu \bigl(s e^{-s}+o(1)\bigr)\Bigr\} .
\end{split}
\end{equation}
The point is that $1-e^{-s}-se^{-s} \to \varphi(-\eps)$ as $s \to z$. 
So, if \eqref{eq:heur:JL} and \eqref{eq:heur:JU} essentially 
determine the right hand side of \eqref{eq:LTH}, then our previous 
considerations suggest 
\[
\Pr(X < (1-\eps) \E X) \ge \exp\Bigl\{-\mu \bigl(\varphi(-\eps) + o(1)\bigr) \Bigr\} .
\]
Luckily, our later calculations confirm that (for suitable choices 
of $p$ and $s$) we can indeed essentially ignore the first term on 
the right hand side of \eqref{eq:LTH} for large deviations, i.e., 
when $\eps^2 \mu \to \infty$ holds.

\subsection{Proofs of Theorem~\ref{thm:LT2} and~\ref{thm:LT3}}
Assume that $\eps, \tau \in (0,1)$ and $\sigma \in (0,\infty)$. Let
\begin{equation}\label{def:pq}
p = 1+\sigma \quad \text{ and } \quad q=1+1/\sigma,
\end{equation}
so that $p,q \in (1,\infty)$ and $1/p+1/q=1$. Furthermore, let 
\begin{equation}\label{def:zs}
z=-\log(1-\eps) \quad \text{and} \quad s = p z .
\end{equation}
With \eqref{eq:LTH} in mind, the following two lemmas are at the 
heart of our argument. 
\begin{lemma}\label{lem:lower}
With definitions as above, if $\Pi (1-e^{-s}) \le 1/2$, then 
\begin{equation}\label{eq:lower}
\left(\frac{\E e^{-sX} }{\E e^{-psX} }\right)^{\frac{1}{p-1}} \E e^{-sX} 
\ge e^{-(1+\eta) \varphi(-\eps) \mu} ,
\end{equation}
with $\eta = 2p^2(\sigma+p\delta + \Pi)+2p\sigma$. 
\end{lemma}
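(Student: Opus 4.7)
The plan is to combine Lemma~\ref{lem:JL} and Lemma~\ref{lem:JU} into a lower bound on the LHS of~\eqref{eq:lower}, and then reduce matters to an analytic inequality for $\varphi$.

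Under the hypothesis $\Pi(1-e^{-s})\le 1/2$ one has $\frac{1}{2(1-\lambda)}\le 1$, so Lemma~\ref{lem:JL} gives $\E e^{-sX}\ge\exp\{-\mu(1-e^{-s})-\mu\Pi(1-e^{-s})^2\}$. Applying Lemma~\ref{lem:JU} with $r=s$, $t=ps$ and raising to the $1/(p-1)$-th power yields
$$\biggpar{\frac{\E e^{-sX}}{\E e^{-psX}}}^{1/(p-1)}\ge\exp\biggpar{\frac{\mu}{(1+\delta)(p-1)}\bigpar{e^{-(1+\delta)s}-e^{-(1+\delta)ps}}}.$$
Expressing the difference as $\int_s^{ps}(1+\delta)e^{-(1+\delta)x}\,dx$ and bounding the (decreasing) integrand by its value at the right endpoint gives $\frac{e^{-(1+\delta)s}-e^{-(1+\delta)ps}}{(1+\delta)(p-1)}\ge s\,e^{-(1+\delta)ps}$. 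Multiplying the two exponential bounds reduces~\eqref{eq:lower} to the scalar inequality
$$F:=(1-e^{-s})+\Pi(1-e^{-s})^2-s\,e^{-(1+\delta)ps}\le(1+\eta)\varphi(-\eps).$$

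To bound $F$, the plan is to exploit $s=pz$ with $z=-\log(1-\eps)$, the identity $(1-\eps)z=\eps-\varphi(-\eps)$, and the Lemma~\ref{lem:varphi} estimates $\eps^2\le 2\varphi(-\eps)$ and $(1-\eps)z^2\le 2\varphi(-\eps)$. Writing $1-e^{-s}=\eps+(1-\eps)(1-e^{-\sigma z})$ and using $1-e^{-\sigma z}\le\sigma z$ gives $1-e^{-s}\le p\eps-\sigma\varphi(-\eps)$, and also $(1-e^{-s})^2\le p^2\eps^2\le 2p^2\varphi(-\eps)$. With $\gamma=(1+\delta)p^2-1$, the factorization $s\,e^{-(1+\delta)ps}=pz(1-\eps)e^{-\gamma z}$ combined with $e^{-\gamma z}\ge 1-\gamma z$ and the two Lemma~\ref{lem:varphi} estimates yields
$$s\,e^{-(1+\delta)ps}\ge p(1-\eps)z-p\gamma(1-\eps)z^2\ge p\eps-p\varphi(-\eps)-2p\gamma\varphi(-\eps).$$
Subtracting these three bounds from one another produces $F\le(1+2p\gamma+2p^2\Pi)\varphi(-\eps)$, and finally the identity $\gamma=(p^2-1)+p^2\delta=\sigma(p+1)+p^2\delta$ (using $p=1+\sigma$) gives $2p\gamma=2p^2\sigma+2p\sigma+2p^3\delta$, whence $2p\gamma+2p^2\Pi=\eta$ exactly.

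The main obstacle is that the constants in $\eta$ have to assemble on the nose, which pins down several choices: the substitution $s=pz$ (so that $\Pi(1-e^{-s})\le 1/2$ meshes with the Lemma~\ref{lem:JL} correction), the replacement of the difference quotient by the conservative endpoint value $s\,e^{-(1+\delta)ps}$ (any looser bound would inflate $\eta$ by a constant factor), and the algebraic identification $p^2-1=\sigma(p+1)$ that generates the additive $2p\sigma$ summand of $\eta$. A minor subtlety is that $1-\gamma z$ may be negative when $\sigma$ or $\delta$ is not small, but the subsequent bound $p\gamma(1-\eps)z^2\le 2p\gamma\varphi(-\eps)$ is valid regardless of sign, so no case split is required.
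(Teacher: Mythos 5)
Your proof is correct and follows essentially the same route as the paper: both combine Lemma~\ref{lem:JL} (simplified via $\lambda\le 1/2$) with Lemma~\ref{lem:JU} applied on $[s,ps]$, bound the resulting difference quotient below by $s\,e^{-(1+\delta)ps}$, and control the second-order terms through Lemma~\ref{lem:varphi}, arriving at exactly the stated $\eta$. The only difference is in the bookkeeping of the final scalar inequality: the paper uses the mean value theorem plus two applications of Taylor's theorem with remainder (expanding $e^{-(1+\delta)ps}$ around $s$, and $g(x)=1-e^{-x}-xe^{-x}$ around $z$), whereas you use the exact factorizations $e^{-s}=(1-\eps)e^{-\sigma z}$ and $e^{-(1+\delta)ps}=(1-\eps)e^{-\gamma z}$ together with $1-e^{-x}\le x$ and $e^{-x}\ge 1-x$ -- an equivalent computation yielding the same error terms.
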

\begin{proof}
Since $f(x)=-e^{-x}$ satisfies $f'(x)=e^{-x}$, the mean value theorem 
implies that there is $\zeta \in [1,p]$ such that 
\begin{equation}\label{eq:lower:mvt}
\frac{e^{-(1+\delta)s}-e^{-(1+\delta)ps}}{(1+\delta)(p-1)s} = e^{-(1+\delta)\zeta s} \ge e^{-(1+\delta)p s} .
\end{equation} 
Furthermore, since $g(x)=e^{-x}$ satisfies $g'(x)=-e^{-x}$ and 
$g''(x)=e^{-x} \ge 0$, using Taylor's theorem with remainder, we obtain 
\begin{equation}\label{eq:lower:mvt2}
e^{-(1+\delta)p s} \ge e^{-s} - \bigl((1+\delta)p -1\bigr)se^{-s} .
\end{equation}
Note that $(1+\delta)p -1= \sigma + p\delta$. 
Furthermore, since $s=-p\log(1-\eps)$, Bernoulli's inequality yields 
\begin{equation}\label{eq:BI}
(1-e^{-s})^2 =(1-(1-\eps)^{p})^2 \le p^2\eps^2.
\end{equation}
So, by 
combining Lemmas~\ref{lem:JL} and~\ref{lem:JU} with
\eqref{eq:lower:mvt}--\eqref{eq:BI},  
using $\Pi (1-e^{-s}) \le 1/2$, it follows that 
\begin{equation*}\label{eq:lower:lb}
\begin{split}
\left(\frac{\E e^{-sX} }{\E e^{-psX} }\right)^{\frac{1}{p-1}} \E e^{-sX} 
& \ge \exp\left\{\frac{\mu s\bigl(e^{-(1+\delta)s}-e^{-(1+\delta)ps}\bigr)}{(1+\delta)(p-1)s} - \mu(1-e^{-s}) - \mu \Pi(1-e^{-s})^2\right\}\\
& \ge \exp\left\{-\mu\Bigl(1-e^{-s}-s e^{-s}+\bigl(\sigma + p\delta\bigr) s^2 e^{-s} + \Pi p^2\eps^2 \Bigr) \right\} . 
\end{split}
\end{equation*}
Let $g(x)=1-e^{-x}-x e^{-x}$, and note that $g(z) = \varphi(-\eps)$. 
Furthermore, for $z \le x \le s$ we have $g'(x) = xe^{-x} \le se^{-z}$. 
So, using Taylor's theorem with remainder, we deduce that
\begin{equation*}
1-e^{-s}-s e^{-s} \le \varphi(-\eps) + (s-z) s e^{-z}.
\end{equation*}
Consequently, since $s=pz \ge z$, we obtain 
\begin{equation*}
\begin{split}
\left(\frac{\E e^{-sX} }{\E e^{-psX} }\right)^{\frac{1}{p-1}} \E e^{-sX} 
& \ge \exp\Bigl\{-\varphi(-\eps)\mu - \bigl(z^2e^{-z} \eta_1 + \eps^2 \eta_2 \bigr)\mu\Bigr\} ,
\end{split}
\end{equation*}
where $\eta_1 = p^2(\sigma+p\delta)+p\sigma$ and $\eta_2 = p^2\Pi$. 
Finally, recalling $z=-\log(1-\eps)$, the point is that 
Lemma~\ref{lem:varphi} yields $\max\{z^2e^{-z},\eps^2\} \le 2 \varphi(-\eps)$,
yielding the result with $\eta=2\eta_1+2\eta_2$. 
\end{proof}

\begin{lemma}\label{lem:negl}
With definitions as above, if $\lambda=\Pi (1-e^{-s}) <1$ and 
$(1-\tau) \sigma^2(1-\eps)^{p} \ge p^2\Pi/(1-\lambda)+\delta/(1+\delta)$, 
then 
\begin{equation}\label{eq:negl}
\left(\frac{\E(e^{-sX} \indic{X < (1-\eps)\E X})}{\E e^{-sX} }\right)^{\frac{p}{p-1}} \ge \exp\left\{-\left(\frac{4p}{\tau\sigma^3(1-\eps)^p\eps^4\mu^2}\right) \varphi(-\eps) \mu \right\} . 
\end{equation}
\end{lemma}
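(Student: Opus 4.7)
The plan is to interpret the ratio on the left-hand side of~\eqref{eq:negl} as a probability with respect to the tilted measure $\tilde\Pr$ with density $e^{-sX}/P_s$ relative to $\Pr$, where $P_s := \E e^{-sX}$, and to show via a second-moment argument that this tilted probability is close to one. Writing $\tilde\E$ and $\tilde\Var$ for the associated expectation and variance, the ratio equals $\tilde\Pr(X < (1-\eps)\mu) = 1 - \tilde\Pr(X \ge (1-\eps)\mu)$; using $\log(1-x) \ge -2x$ for $x \in [0,1/2]$ and raising to the $p/(p-1) = p/\sigma$ power, it suffices to prove $\tilde\Pr(X \ge (1-\eps)\mu) \le 2\varphi(-\eps)/(\tau\sigma^2(1-\eps)^p\eps^4\mu)$. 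I would obtain this by one-sided Chebyshev (Cantelli) after controlling the tilted mean and variance.

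To bound the tilted mean from above, I would exploit the convexity of the log-MGF $s \mapsto \log P_s$ via a difference quotient: for any $0 \le r < s$,
\[
\tilde\E(X) \;=\; -\frac{d}{ds}\log P_s \;\le\; \frac{\log P_r - \log P_s}{s-r}.
\]
Taking $r$ slightly below $s = pz$ (where $z = -\log(1-\eps)$, so that $e^{-s} = (1-\eps)^p$) and combining Lemma~\ref{lem:JL} (lower bound on $P_r$) with the companion Janson-type estimate $P_s \le \exp\{-\mu(1-e^{-(1+\delta)s})/(1+\delta)\}$ (obtained from Lemma~\ref{lem:JU} with its lower endpoint set to $0$), I expect to get $\tilde\E(X) \le (1-\eps)^p\mu\bigl(1 + O(p^2\Pi/(1-\lambda) + \delta/(1+\delta))\bigr)$. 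The hypothesis $(1-\tau)\sigma^2(1-\eps)^p \ge p^2\Pi/(1-\lambda) + \delta/(1+\delta)$, together with the elementary inequality $1-(1-\eps)^\sigma \ge \sigma z(1-\eps)^\sigma \ge \sigma\eps(1-\eps)^\sigma$, should then yield a spectral gap of the form $(1-\eps)\mu - \tilde\E(X) \gtrsim \sqrt{\tau}\,\mu\sigma\eps(1-\eps)^p$.

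For the tilted variance, I would decompose into diagonal and off-diagonal parts,
\[
\tilde\Var(X) \;\le\; \sum_{\alpha}\tilde\E(I_\alpha) + \sum_{\alpha \neq \beta}\bigl(\tilde\E(I_\alpha I_\beta) - \tilde\E(I_\alpha)\tilde\E(I_\beta)\bigr),
\]
bound the diagonal sum by $\tilde\E(X) \le (1-\eps)^p\mu(1+\text{small})$, and estimate the off-diagonal terms by rerunning the same Laplace-transform reasoning with $I_\alpha I_\beta$ playing the role of $I_\alpha$: the non-adjacent pairs ($\alpha \not\sim \beta$) are almost decoupled under the tilt because the factor $e^{-sI_\alpha}e^{-sI_\beta}$ factorises, while the adjacent pairs ($\alpha \sim \beta$) contribute a total of order $\delta\mu(1-\eps)^p$ that is absorbed by the hypothesis on $\delta$. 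The expected outcome is $\tilde\Var(X) \lesssim \mu(1-\eps)^p/\tau$, at which point Cantelli delivers the displayed bound on $\tilde\Pr(X\ge(1-\eps)\mu)$ and hence~\eqref{eq:negl}. The main technical obstacle is controlling the off-diagonal part sharply: although $I_\alpha$ and $I_\beta$ are independent under $\Pr$ when $\alpha \not\sim \beta$, the tilt can still couple them through indicators $I_\gamma$ adjacent to both, so extracting the correct dependence on $1/(1-\lambda)$ (rather than a crude $O(1)$ bound) will require careful bookkeeping, perhaps via the identity $\tilde\Var(X) = -\partial_s\tilde\E(X)$ combined with a further difference-quotient estimate in the spirit of Lemma~\ref{lem:JU}.
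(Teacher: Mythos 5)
Your reduction of the left-hand side to the tilted probability $\tilde\Pr(X<(1-\eps)\mu)$ with $d\tilde\Pr\propto e^{-sX}\,d\Pr$ is fine, but the two quantitative inputs you need for Cantelli are not delivered by your plan. First, the tilted-mean step has the inequalities paired the wrong way: from convexity you correctly get $\tilde\E X\le\bigl(\log P_r-\log P_s\bigr)/(s-r)$, so to bound this \emph{above} you need an \emph{upper} bound on $\log P_r$ and a \emph{lower} bound on $\log P_s$; but Lemma~\ref{lem:JL} gives a lower bound on $P_r$ and the Janson-type estimate gives an upper bound on $P_s$, which together bound the chord slope from \emph{below}, not above. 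The correct pairing (Janson's upper bound at $r$, the FKG bound of Lemma~\ref{lem:JL} at $s$) can be made to work, but then $r$ cannot be taken arbitrarily close to $s$: the $\Pi$- and $\delta$-corrections in the numerator do not vanish as $r\to s$, so $s-r$ must be chosen to balance them against the loss $e^{s-r}$, and this bookkeeping is exactly where the hypothesis on $\sigma^2(1-\eps)^p$ has to enter; none of this is carried out. There are also two smaller issues: the step $\log(1-x)\ge-2x$ needs $\tilde\Pr(X\ge(1-\eps)\mu)\le1/2$, which is not among the hypotheses, and with your claimed gap $\sqrt{\tau}\,\sigma\eps(1-\eps)^p\mu$ and variance $\mu(1-\eps)^p/\tau$, Cantelli gives roughly $1/\bigl(\tau^2\sigma^2\eps^2(1-\eps)^p\mu\bigr)$, which misses the target $2\varphi(-\eps)/\bigl(\tau\sigma^2(1-\eps)^p\eps^4\mu\bigr)$ by a factor $1/\tau$.

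The decisive gap, which you yourself flag, is the tilted variance: nothing in the paper's toolkit yields $\tilde\Var(X)\lesssim\mu(1-\eps)^p$. Under the exponential tilt the summands are coupled globally through $e^{-sX}$, Harris/FKG-type correlation inequalities do not transfer to the tilted measure, and ``rerunning the Laplace-transform reasoning with $I_\alpha I_\beta$'' can at best control $\tilde\E(I_\alpha I_\beta)$, whereas for the (dominant) non-adjacent pairs you need the \emph{covariance} $\tilde\E(I_\alpha I_\beta)-\tilde\E(I_\alpha)\tilde\E(I_\beta)$ to be small, for which no tool is provided. The fallback $\tilde\Var(X)=\frac{d^2}{ds^2}\log\E e^{-sX}$ does not help either: a difference quotient of the first derivative only bounds the \emph{average} of the second derivative over an interval, and without monotonicity (i.e.\ third-cumulant control) this gives no pointwise bound at $s$. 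The paper's proof of Lemma~\ref{lem:negl} avoids variance control altogether: on the event $X\ge(1-\eps)\mu$ it uses $e^{-sX}\le e^{-(s-t)(1-\eps)\mu}e^{-tX}$ with $t=z/(1+\delta)$ (a Chernoff step inside the numerator), then bounds $\E e^{-tX}$ by Lemma~\ref{lem:JU} with $r=0$ and $\E e^{-sX}$ from below by Lemma~\ref{lem:JL}, expands $(1-\eps)x-(1-e^{-x})$ to second order around $z$ to extract the gain $\sigma^2(1-\eps)^pz^2/2$, absorbs the $\Pi$- and $\delta$-terms by the stated hypothesis, and finishes with $e^{-x}+e^{-1/x}\le1$; only first-order Laplace-transform information is needed. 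As it stands, your proposal has a genuine gap at the variance step and a directional error at the mean step.
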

\begin{proof}
As $p=1+\sigma$, we write 
\begin{equation}\label{eq:negl:C}
\left(\frac{\E(e^{-sX} \indic{X < (1-\eps)\mu})}{\E e^{-sX}}\right)^{\frac{p}{p-1}} = \left(1- \frac{\E(e^{-sX} \indic{X \ge (1-\eps)\mu})}{\E e^{-sX}}\right)^{\frac{p}{\sigma}} .
\end{equation}
Let $t=z/(1+\delta)$. Recalling $\varphi(-\eps)=(1-\eps)\log(1-\eps)+\eps$, 
note that 
\[
t (1-\eps)\mu -\frac{\mu}{1+\delta}\left(1-e^{-(1+\delta)t}\right)= -\frac{\varphi(-\eps)\mu}{1+\delta} .
\] 
So, using $t\le s$ and Lemma~\ref{lem:JU} (with $r=0$), it follows that 
\begin{equation}\label{eq:negl:EU}
\begin{split}
\E(e^{-sX} \indic{X \ge (1-\eps)\mu}) &\le e^{-(s-t)(1-\eps) \mu} \cdot \E e^{-tX} 
\le \exp\left\{-s(1-\eps)\mu - \frac{\varphi(-\eps)\mu}{1+\delta}\right\} . 
\end{split}
\end{equation}
Set $h(x)=(1-\eps)x-(1-e^{-x})$, and note that $h(z) = -\varphi(-\eps)$ 
and $h'(z) = 0$. Furthermore, for $x \le s$ we have $h''(x) = e^{-x} \ge e^{-s}$. 
So, using Taylor's theorem with remainder, we obtain 
\begin{equation}\label{eq:negl:T}
(1-\eps)s -(1-e^{-s}) \ge -\varphi(-\eps) +(s-z)^2 e^{-s}/2 .
\end{equation}
Recalling $p=1+\sigma$, $s=pz$ and $\lambda=\Pi (1-e^{-s})$, by 
combining Lemma~\ref{lem:JL} with \eqref{eq:negl:EU}, \eqref{eq:negl:T} 
and $(1-e^{-s})^2 \le s^2$, we infer 
\begin{equation*}
\begin{split}
\frac{\E(e^{-sX} \indic{X \ge (1-\eps)\mu})}{\E e^{-sX} } 
&\le \exp\left\{-\mu \left((1-\eps)s -(1-e^{-s})+ \frac{\varphi(-\eps)}{1+\delta}-\frac{\Pi s^2}{2(1-\lambda)} \right)\right\} \\
& \le \exp\left\{-\mu \left( \frac{\sigma^2 (1-\eps)^pz^2}{2} -\frac{\Pi p^2z^2}{2(1-\lambda)}- \frac{\delta \varphi(-\eps)}{1+\delta}\right)\right\} .
\end{split}
\end{equation*}
Since Lemma~\ref{lem:varphi} gives $\varphi(-\eps) \le \log^2(1-\eps)/2 = z^2/2$, 
we have, by assumption, 
\begin{equation}\label{eq:negl:EU2}
\frac{\E(e^{-sX} \indic{X \ge (1-\eps)\mu})}{\E e^{-sX}} \le \exp\Bigl\{-\tau \sigma^2 (1-\eps)^pz^2\mu/2\Bigr\}. 
\end{equation}
Now, inserting \eqref{eq:negl:EU2} into \eqref{eq:negl:C}, using the 
fact that $e^{-x}+e^{-1/x} \le 1$ for $x >0$ (as in the proof of 
Theorem~2 in~\cite{Janson}), we obtain 
\begin{equation*}\label{eq:negl:EL}
\left(\frac{\E(e^{-sX} \indic{X < (1-\eps)\mu})}{\E e^{-sX} }\right)^{\frac{p}{p-1}} \ge \exp\left\{-\frac{2p}{\tau\sigma^3 (1-\eps)^pz^2\mu}\right\} . 
\end{equation*}
Finally, recalling $z=-\log(1-\eps)$, Lemma~\ref{lem:varphi} yields 
$z^2 \ge \eps^2$ and $1 \le 2\varphi(-\eps)/\eps^2$. 
\end{proof}

Combining \eqref{eq:LTH} with Lemma~\ref{lem:lower} and~\ref{lem:negl}, 
the proofs of Theorem~\ref{thm:LT2} and~\ref{thm:LT3} reduce to 
defining suitable parameters $\sigma$ and $\tau$ (our choices are 
somewhat ad-hoc, and yield fairly transparent error-terms). 
\begin{proof}[Proof of Theorem~\ref{thm:LT3}]
With foresight, let $\tau = 5/8$ and 
\begin{equation}\label{gs}
\sigma = \max\bigl\{\Pi^{1/4},\delta^{1/4},[e(1-\eps)\eps^2\mu]^{-1/2}\bigr\} . 
\end{equation}
Note that the assumption 
$0 \le \eps \le 1-4\max\{\Pi^{1/4},\delta^{1/4}\}$ implies 
$\max\{\Pi,\delta\} \le 4^{-4}$, so that 
$\lambda = \Pi (1-e^{-s}) \le \Pi \le 1/5$. 
Hence, using $e(1-\eps)\eps^2\mu \ge 1$, we see that $\sigma \le 1$ and thus 
 $p \le 2$. 
Consequently, by \eqref{gs}, we have 
\begin{equation}\label{eq:thm:LT:cond}
\sigma^4 (1-\eps)^p \eps^4 \mu^2 \ge \sigma^4 (1-\eps)^2 \eps^4 \mu^2 \ge e^{-2} 
\end{equation}
and $\sigma^2 \ge \max\{\Pi^{1/2},\delta^{1/2}\}$. In addition, by 
assumption, we have $(1-\eps)^p \ge (1-\eps)^2 \ge 16 \max\{\Pi^{1/2},\delta^{1/2}\}$. 
Since $16(1-\tau) = 6$ and $p^2/(1-\lambda) \le 5$, it follows that 
\[
(1-\tau)\sigma^2(1-\eps)^p \ge 6 \max\{\Pi,\delta\} \ge p^2 \Pi/(1-\lambda) + \delta/(1+\delta) .
\]
Now, combining \eqref{eq:LTH} with Lemmas~\ref{lem:lower}--\ref{lem:negl} 
and~\eqref{eq:thm:LT:cond}, we obtain 
\[
\Pr(X < (1-\eps) \mu) \ge e^{-(1+\kappa) \varphi(-\eps) \mu} ,
\] 
with $\kappa = 2p^2(\sigma+p\delta + \Pi)+2p\sigma + 4e^{2}\tau^{-1}p\sigma$. 
Finally, using $\sigma \ge \sigma^4\ge \max\{\delta,\Pi\}$, $p \le 2$ and
$\tau = 5/8$,  
we see that $\kappa \le 135 \sigma$. 
\end{proof}
\begin{proof}[Proof of Theorem~\ref{thm:LT2}]
Let $\tau = (1-\Pi)/5$, so that, by assumption, $\tau \in (0,1/5]$. 
The proof distinguishes two cases, which eventually establish 
\eqref{eq:LT2} by noting that Lemma~\ref{lem:varphi} gives 
$\varphi(-\eps) \le \eps^2$. 

First, we assume $0 \le \eps < \tau^2/2$. Note that then, by assumption, 
we have $0 < \eps < 1/50$ and $\delta=\delta^*$. Let $p = 2/\tau$ and 
$\sigma=p-1$. Analogous to \eqref{eq:BI} we have 
$1-e^{-s} =1-(1-\eps)^p\le p \eps$,  
so that $\Pi \le 1$ implies 
\[
\lambda=\Pi (1-e^{-s}) \le \Pi p \eps \le \tau ,
\]
which in particular yields $\lambda \le 1/2$, with room to spare. 
Next observe that, since $\sigma/p = 1-1/p$ and 
$\max\{2/p, p\eps ,\lambda\} = \tau$, by the definition of $\tau$ we 
have 
\begin{equation*}
\begin{split}
\frac{(1-\tau)\sigma^2(1-\eps)^p(1-\lambda)}{p^2} - \frac{1}{p^2} & \ge (1-\tau)(1-2/p)(1-p\eps)(1-\lambda)-\tau^2/4 \\
& \ge (1-\tau)^4 - \tau^2/4 \ge 1-5 \tau = \Pi ,
\end{split}
\end{equation*}
which in turn readily yields $(1-\tau) \sigma^2(1-\eps)^{p} \ge p^2\Pi/(1-\lambda)+\delta/(1+\delta)$. 
Similarly, using $\sigma \ge p/2=\tau^{-1}$ and $\tau \le 1/2$ we 
obtain 
\[
\tau\sigma^3(1-\eps)^{p} \ge \tau^{-2}(1-\tau) \ge \tau^{-2}/2 .
\]
Since $\eps^4\mu^2 \ge (1+\delta)^{-1}$ by assumption, analogously to 
the proof of Theorem~\ref{thm:LT3}, using \eqref{eq:LTH} together 
with Lemmas~\ref{lem:lower}--\ref{lem:negl}, we obtain 
\[
\Pr(X \le (1-\eps) \mu) \ge \Pr(X < (1-\eps) \mu) \ge e^{-(1+\kappa) \varphi(-\eps) \mu}, 
\]
with $\kappa = 2p^2(\sigma+p\delta + \Pi)+2p\sigma + 8\tau^2p(1+\delta)$. 
Now, using $\max\{\Pi,\tau\} \le 1$ and  
$\sigma \le p = 2/\tau=10/(1-\Pi)$, 
a short calculation shows that, say, 
\[
1+\kappa \le 17+2p^3+4p^2 + (2p^3+16)\delta \le 2500(1+\delta)/(1-\Pi)^3. 
\]

Finally, we assume $\tau^2/2 \le \eps \le 1$. Using the lower 
bound~\eqref{eq:H} resulting from Harris' inequality~\cite{Harris1960}, 
it follows that 
\begin{equation}\label{eq:harris:LB}
\Pr(X \le (1-\eps) \mu) \ge \Pr(X =0) \ge e^{-\mu/(1-\Pi)} . 
\end{equation}
The point is that, by assumption, we have $2/\eps^2 \le 8/\tau^4 = 5000/(1-\Pi)^4$, 
so that Lemma~\ref{lem:varphi} implies $1 \le 5000 \varphi(-\eps)/(1-\Pi)^4$. 
\end{proof}

\subsection{Proofs of Theorem~\ref{thm:LT} and Lemma~\ref{lem:LT4}}\label{sec:lbprof:comb}
The remaining proofs of Theorem~\ref{thm:LT} and Lemma~\ref{lem:LT4} 
are straightforward. 
\begin{proof}[Proof of Lemma~\ref{lem:LT4}]
Note that, by assumption, $5\sqrt{1-\eps} \le 5 e^{-1/2} \le 4$. So, 
using Lemma~\ref{lem:varphi2}, we infer 
\[
1/(1-\Pi) \le (1+5\sqrt{1-\eps})\bigl(1+\Pi/(1-\Pi)\bigr)\varphi(-\eps) \le (1+\zeta)\varphi(-\eps) ,
\]
with $\zeta = 10\max\{\sqrt{1-\eps},\Pi/(1-\Pi)\}$. Now an application 
of~\eqref{eq:H}, analogous to \eqref{eq:harris:LB}, completes the proof. 
\end{proof}
\begin{proof}[Proof of Theorem~\ref{thm:LT}]
Note that, using the assumption, 
\[
\eta = \max\{4\Pi^{1/4},\indic{\eps<1}4\delta^{1/4},\indic{\eps<1}e^{-1}(\eps^2\mu)^{-1/2}\} 
\] 
satisfies $\eta \in [0,e^{-1}]$. 
If $1-\eta \le \eps \le 1$, then $\eps \ge 1-e^{-1}$ and 
$1-\eps \le \eta$, so that Lemma~\ref{lem:LT4} implies \eqref{eq:LT}. 
If $0 \le \eps < 1-\eta$, then $e(1-\eps)\eps^2\mu \ge e\eta\eps^2\mu\ge (\eps^2\mu)^{1/2} \ge 1$ 
and $\eps \le 1-4\max\{\Pi^{1/4},\delta^{1/4}\}$, so that 
Theorem~\ref{thm:LT3} establishes \eqref{eq:LT}. 
\end{proof}

\section{Bootstrapping lower bounds for the lower tail}\label{sec:bstrp} 
As discussed, Theorem~\ref{thm:LT} and~\ref{thm:LT2} only give 
reasonable lower bounds for the lower tail if $\delta = O(1)$, i.e., 
as long as the dependencies are `weak'. In this section we present a 
bootstrapping strategy, which often allows us to deal with the 
remaining case, where $\delta = \Omega(1)$ holds.

In order to establish a competent lower bound on the lower tail, we 
usually need to (approximately) identify the most likely way to 
obtain $X \le (1-\eps)\E X$. At first glance it seems that this would 
require fairly detailed information about the random variable $X$, 
where $\mu= \E X$. However, in the general setting of this paper, we 
discovered that, perhaps surprisingly, we can \emph{systematically} 
guess suitable (nearly) `extremal' events by only inspecting the form 
of the variance $\Var X \le \Lambda=\Lambda(X)$. Indeed, assume that 
there is a random variable $Y$, of the same type as \eqref{def:X}, 
satisfying 
\begin{equation}\label{meth:V}
\Lambda = \Theta(\mu^2/\E Y) \quad \text{and} \quad \delta(Y)=O(1) .
\end{equation}
For example, if $X_{H}$ counts the number of copies of a given graph 
$H$ in $G_{n,p}$, then \eqref{meth:V} holds for $X=X_{H}$ with $Y=X_G$, 
where $G \subseteq H$ is a suitable subgraph (see \cite{JLR_ineq,JLR} 
or Lemma~\ref{lem:lambda:HG}). Defining $\cE$ as the event that 
$Y \le (1-\eps)\E Y$ holds, our starting point is the basic inequality 
\begin{equation}\label{meth:Pr}
\Pr(X \le (1-\eps)\E X) \ge \Pr(X \le (1-\eps)\E X \mid \cE) \Pr(\cE) . 
\end{equation}
Assuming that Theorem~\ref{thm:LT} or~\ref{thm:LT2} applies to $Y$, 
using \eqref{meth:V} there are constants $c_1,c_2 > 0$ such that 
\begin{equation}\label{meth:PrU}
\Pr(\cE) \ge e^{-c_1 \varphi(-\eps) \E Y} \ge e^{-c_2 \varphi(-\eps) \mu^2/\Lambda} .
\end{equation}
Hence it remains to estimate $\Pr(X \le (1-\eps)\E X \mid \cE)$ from 
below. It turns out that if $X$ and $Y$ are suitably related (as in 
the subgraphs example), then under fairly mild conditions we can prove 
that $\E(X \mid \cE)$ is quite a bit smaller than $(1-\eps)\E X$. 
In other words, by conditioning on $\cE$ we intuitively `convert' the 
\emph{rare} event $X \le (1-\eps)\E X$ into a \emph{typical} one 
(this subtle conditioning idea is at the heart of our approach). 
With this in mind it seems plausible that we have, say, 
\begin{equation}\label{meth:Cond}
\Pr(X \le (1-\eps)\E X\mid\cE) = \Omega(1) ,
\end{equation}
although $\ge e^{-c_3 \varphi(-\eps) \mu^2/\Lambda}$ suffices for 
our purposes. Note that for the special case $\eps=1$ this inequality 
is immediate in the subgraphs example (where $X_G=0$ implies $X_H=0$). 
Finally, by combining \eqref{meth:Pr}--\eqref{meth:Cond} we obtain 
\begin{equation}\label{meth:Pr:LB}
\Pr(X \le (1-\eps)\E X) = \Omega(e^{-c_2 \varphi(-\eps) \mu^2/\Lambda)}) ,
\end{equation}
which qualitatively matches the upper bound of \eqref{eq:J}, as desired.

To implement this proof strategy, we need to be able to verify that 
\eqref{meth:Cond} holds (or a related inequality). Here the main 
technical challenge is that, after conditioning on $\cE$, the 
$i \in \Gamma$ are no longer added independently to $\Gamma_{\vp}$. 
In Sections~\ref{sec:bstrp:size}--\ref{sec:bstrp:vxsym} we present 
three approaches that, in symmetric situations, allow us to 
\emph{routinely} overcome this difficulty (each of them hinges on 
an event that is similar to $\cE$). 
Since we are interested in large deviations (with exponentially 
small probabilities), here $(\eps \mu)^2 = \Omega(\Lambda)$ is a 
natural condition in view of \eqref{eq:J}, \eqref{meth:Pr:LB} and 
the fact $\varphi(-\eps) = \Theta(\eps^2)$.

\subsection{Binomial random subset}\label{sec:bstrp:size}
The first approach is motivated by the following simple observation: 
if $|\Gamma_{\vp}|=0$, then \emph{deterministically} $X=0$. Indeed, 
this yields 
\begin{equation*}\label{eq:rsize:0}
\Pr(X \le (1-\eps) \E X) \ge \Pr(X = 0 ) \ge \Pr(|\Gamma_{\vp}| =0) , 
\end{equation*}
which for $\eps=\Theta(1)$ may give a fair lower bound. 
The next theorem, for the case of equal $p_i$, is based on the 
following heuristic extension of this observation: if $|\Gamma_{\vp}|$ 
is `too small', then we expect that $X$ is \emph{typically} also 
`too small'. As we shall see, the crux is that conditioning on 
$|\Gamma_{\vp}| \le (1-\eps) \E |\Gamma_{\vp}|$ decreases the 
expected value of $X$, which intuitively increases the probability 
that $X \le (1-\eps) \E X$ occurs. Note that $\E(X \mid |\Gamma_{\vp}|=0)=0$ 
confirms this phenomenon in the special case $\eps=1$. 
\begin{theorem}\label{thm:rsize}
Let $X=\sum_{\alpha \in \cX} I_{\alpha}$, $\mu=\E X$ and $\Lambda$ 
be defined as in Section~\ref{sec:intro}. 
Suppose that $\vp=(p, \ldots, p) \in [0,1]^N$ and $\min_{\alpha \in \cX}|Q(\alpha)| \ge 2$. 
For all $\eps \in (0,1]$ satisfying $(\eps \mu)^2 \ge \indic{\eps < 1}\Lambda$, with $c=1/2 + \indic{\eps =1}1/2$,
\begin{equation}\label{eq:rsize}
\Pr(X \le (1-\eps) \E X) \ge c \Pr(|\Gamma_{\vp}| \le (1-\eps) \E |\Gamma_{\vp}|) .
\end{equation}
\end{theorem}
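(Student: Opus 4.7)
The plan is to condition on $Y := |\Gamma_{\vp}| \sim \Bin(N,p)$ and to exploit that, given $Y = m$, the set $\Gamma_{\vp}$ is uniformly distributed on $\binom{\Gamma}{m}$. Set $M := \floor{(1-\eps) Np}$, so that $\{Y \le M\} = \{|\Gamma_{\vp}| \le (1-\eps) \E|\Gamma_{\vp}|\}$. The case $\eps=1$ is trivial: then $M=0$ and $\{Y=0\}$ forces $X=0$, so $\Pr(X\le 0)\ge\Pr(Y=0)$ with $c=1$. From now on assume $\eps\in(0,1)$. The key step will be the conditional lower bound
\[
\Pr\bigl(X \le (1-\eps) \mu \bigm| Y = M\bigr) \ge 1/2 .
\]
Once this is proved, I would extend it to every $m \le M$ by stochastic monotonicity --- each $I_{\alpha}$ is an increasing function of $\Gamma_{\vp}$, and a uniform $m'$-subset with $m' \le m$ can be coupled to a uniform $m$-subset by deleting $m-m'$ uniformly chosen elements --- and then conclude
\[
\Pr(X \le (1-\eps) \mu) \ge \Pr(X \le (1-\eps)\mu,\, Y \le M) \ge \tfrac{1}{2}\,\Pr(Y \le M),
\]
which is exactly \eqref{eq:rsize}.

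The conditional bound will be obtained from Cantelli's (one-sided Chebyshev) inequality. Write $\tilde X$ for $X$ under $(\cdot \mid Y = M)$. The hypergeometric identity
\[
\E(I_\alpha \mid Y = M) = \prod_{i=0}^{|Q(\alpha)|-1} \frac{M-i}{N-i} \le (M/N)^{|Q(\alpha)|} \le \bigl((1-\eps) p\bigr)^{|Q(\alpha)|},
\]
combined with the assumption $|Q(\alpha)|\ge 2$, gives $\E(I_\alpha \mid Y=M) \le (1-\eps)^2 \E I_\alpha$, and hence $\E \tilde X \le (1-\eps)^2 \mu$. In particular the target value $(1-\eps)\mu$ lies at least $t := \eps(1-\eps)\mu$ above $\E \tilde X$.

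For the variance I will show that $\Var \tilde X \le (1-\eps)^2 \Lambda$. Splitting $\E \tilde X^2 = \sum_{\alpha,\beta} \E(I_\alpha I_\beta \mid Y=M)$ into pairs with $\alpha=\beta$ or $\alpha \sim \beta$ versus disjoint pairs, the first sum is bounded by $(1-\eps)^2 \Lambda$ by applying the same hypergeometric estimate to $|Q(\alpha)\cup Q(\beta)| \ge 2$. For the disjoint pairs I would use that a uniform random $M$-subset of $\Gamma$ is \emph{negatively associated} (increasing functions of disjoint coordinate blocks have non-positive covariance), giving
\[
\E(I_\alpha I_\beta \mid Y = M) \le \E(I_\alpha \mid Y = M)\,\E(I_\beta \mid Y = M)
\]
whenever $Q(\alpha)\cap Q(\beta)=\emptyset$, so that the disjoint contribution is at most $(\E \tilde X)^2$; subtracting $(\E \tilde X)^2$ proves the claim. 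With $t^2 = \eps^2(1-\eps)^2 \mu^2$ and the hypothesis $(\eps\mu)^2 \ge \Lambda$, Cantelli's inequality then yields
\[
\Pr(\tilde X > (1-\eps)\mu) \le \frac{\Var \tilde X}{\Var \tilde X + t^2} \le \frac{\Lambda}{\Lambda + \eps^2 \mu^2} \le \frac{1}{2},
\]
which is the required conditional estimate.

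The hard step is the variance bound: a purely term-by-term hypergeometric estimate would leave an extra $(\E \tilde X)^2 \approx (1-\eps)^4 \mu^2$ contribution from the disjoint pairs, and Cantelli would then fail to reach $1/2$ under the borderline assumption $(\eps\mu)^2=\Lambda$. The negative-association property of uniform random subsets is precisely what is needed to absorb that term into $(\E \tilde X)^2$ and deliver the sharp constant $c=1/2$.
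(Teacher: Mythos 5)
Your proposal is correct and follows essentially the same route as the paper: condition on $|\Gamma_{\vp}|$, bound the conditional mean by $(1-\eps)^2\mu$ and the conditional variance by $(1-\eps)^2\Lambda$ via the hypergeometric estimates together with negative correlation of disjoint pairs, and finish with the one-sided Chebyshev (Cantelli) inequality. The only differences are cosmetic: the paper applies these bounds directly for every $j\le(1-\eps)Np$ instead of using your coupling/monotonicity step, and it verifies the negative correlation for disjoint pairs by a short explicit computation rather than citing negative association of uniform random subsets.
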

In the proof of Theorem~\ref{thm:rsize} we use the following 
one-sided version of Chebyshev's inequality (see, e.g., 
Theorem~A.17 in~\cite{DGL1996}). 
\begin{claim}\label{cl:Ch}
If $\Var Z \le v$, then $\Pr(Z \ge \E Z + t) \le v/(v+t^2)$ for all $t > 0$. 
\end{claim}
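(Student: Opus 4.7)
This is the classical Cantelli (one-sided Chebyshev) inequality, and the plan is the standard centering-plus-shift trick followed by Markov's inequality and an optimization.

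First I would center the variable by setting $Y = Z - \E Z$, so that $\E Y = 0$ and $\Var Y = \Var Z \le v$. Next, for an arbitrary parameter $a > 0$ (to be optimized later), I would observe that since $t + a > 0$, the event $\{Y \ge t\}$ is contained in $\{Y + a \ge t + a\}$, and since both sides are positive this in turn is contained in $\{(Y+a)^2 \ge (t+a)^2\}$. Applying Markov's inequality to the nonnegative random variable $(Y+a)^2$ then gives
\begin{equation*}
\Pr(Z \ge \E Z + t) = \Pr(Y \ge t) \le \frac{\E (Y+a)^2}{(t+a)^2} = \frac{\Var Z + a^2}{(t+a)^2} \le \frac{v + a^2}{(t+a)^2},
\end{equation*}
where I used $\E Y = 0$ to expand $\E(Y+a)^2 = \Var Y + a^2$.

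Finally I would optimize the right-hand side over $a > 0$. Differentiating $(v+a^2)/(t+a)^2$ in $a$ and setting the derivative to zero yields $a(t+a) = v+a^2$, i.e., $a = v/t$. Substituting back gives $(v + v^2/t^2)/(t + v/t)^2 = v(t^2+v)/t^2 \cdot t^2/(t^2+v)^2 = v/(v+t^2)$, which is the desired bound. The only mild subtlety is checking the direction of the inequality when squaring (which is why we need $t+a > 0$, automatic here since $t, a > 0$); there is no real obstacle.
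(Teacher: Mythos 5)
Your proof is correct: it is the classical Cantelli argument (center, shift by $a>0$, apply Markov to $(Y+a)^2$, optimize at $a=v/t$), and the algebra checks out, with the degenerate case $v=0$ being trivial anyway since then $Z=\E Z$ almost surely. The paper does not prove this claim at all but simply cites it (Theorem~A.17 in the reference of Devroye, Gy\"orfi and Lugosi), so your self-contained derivation is exactly the standard proof behind that citation and nothing further is needed.
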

\begin{proof}[Proof of Theorem~\ref{thm:rsize}]
Given $0 \le j \le N$, we write $\Pr(\cdot \mid |\Gamma_{\vp}|=j) = \Pr_j(\cdot)$ for brevity. 
Note that for $m=(1-\eps)Np=(1-\eps)\E |\Gamma_{\vp}|$ we have 
\begin{equation}\label{eq:thm:rsize:Pr}
\begin{split}
\Pr(X \le (1-\eps)\mu) &\ge \sum_{0 \le j \le m}\Pr_j(X \le (1-\eps)\mu) \Pr(|\Gamma_{\vp}|=j)\\
& \ge \Pr(|\Gamma_{\vp}|\le m) \min_{0 \le j \le m}\Pr_j(X \le (1-\eps)\mu) .
\end{split}
\end{equation}
Since $\Pr_0(X \le (1-\eps)\mu) \ge \Pr_0(X =0)=1$, we henceforth may 
assume $m \ge 1$. Consequently $\eps < 1$ and $p > 0$ hold, so that 
$\mu \ge \min_{\alpha \in \cX} \E I_{\alpha} \ge p^N > 0$. 

In the following we estimate the conditional expected value and 
variance of~$X$. Given $0 \le j \le m$, we write $\E(\cdot \mid |\Gamma_{\vp}|=j) = \E_j(\cdot)$ 
and $\Var(\cdot \mid |\Gamma_{\vp}|=j) = \Var_j(\cdot)$ for brevity. 
Let $\Gamma_{j} \subseteq \Gamma$ with $|\Gamma_{j}|=j$ be chosen 
uniformly at random. Since $\vp=(p, \ldots, p)$, it follows that 
$\Gamma_{\vp}$ conditioned on $|\Gamma_{\vp}|=j$ has the same 
distribution as $\Gamma_{j}$. As $|Q(\alpha)| \ge 2$ and 
$j \le m \le N$, using $I_{\alpha} = \indic{Q(\alpha) \subseteq \Gamma_{\vp}}$ 
we infer
\begin{equation}\label{eq:thm:rsize:E}
\begin{split}
\E_j(I_\alpha) &= \indic{|Q(\alpha)| \le j}\frac{\binom{N-|Q(\alpha)|}{j-|Q(\alpha)|}}{\binom{N}{j}} = \indic{|Q(\alpha)| \le j}\prod_{0 \le i < |\alpha|}\frac{j-i}{N-i} \\
& \le \Bigl(\frac{j}{N}\Bigr)^{|Q(\alpha)|} \le (1-\eps)^{|Q(\alpha)|} p^{|Q(\alpha)|} \le (1-\eps)^2 \E I_{\alpha} .
\end{split}
\end{equation}
Since $I_\alpha I_\beta=\indic{Q(\alpha) \cup Q(\beta) \subseteq \Gamma_{\vp}}$, we analogously 
obtain $\E_j(I_\alpha I_\beta) \le (1-\eps)^2 \E(I_\alpha I_\beta)$. 
Furthermore, if $Q(\alpha) \cap Q(\beta) = \emptyset$ and 
$|Q(\alpha)|+|Q(\beta)| \le j$, then a similar calculation shows that 
\begin{equation*}
\E_j(I_\alpha \mid I_{\beta}=1) = \frac{\binom{N-|Q(\beta)|-|Q(\alpha)|}{j-|Q(\beta)|-|Q(\alpha)|}}{\binom{N-|Q(\beta)|}{j-|Q(\beta)|}} = \prod_{0 \le i < |Q(\alpha)|} \frac{j-|Q(\beta)|-i}{N-|Q(\beta)|-i} \le \E_j(I_\alpha) .
\end{equation*}
If $|Q(\alpha) \cup Q(\beta)| > j$ then, trivially, $\E_j(I_\alpha I_{\beta}) =0$. 
It follows that $Q(\alpha) \cap Q(\beta) = \emptyset$ implies 
$\E_j(I_\alpha I_{\beta})-\E_j(I_\alpha) \E_j(I_{\beta}) \le 0$. 
Combining our findings, we deduce that 
\begin{equation}\label{eq:thm:rsize:EVar}
\max_{0 \le j \le m}\E_j(X) \le (1-\eps)^2 \mu \quad \text{ and } \quad \max_{0 \le j \le m}\Var_j(X) \le (1-\eps)^2 \Lambda .
\end{equation}

Finally, using \eqref{eq:thm:rsize:EVar} and the one-sided Chebyshev's 
inequality (Claim~\ref{cl:Ch}) we infer that for every $0 \le j \le m$ 
we have 
\begin{equation*}\label{eq:thm:rsize:PrCh}
\Pr_j(X > (1-\eps)\mu) \le \Pr_j(X \ge \E_j(X) + (1-\eps) \eps \mu) \le \Lambda/(\Lambda+(\eps\mu)^2) ,
\end{equation*}
which together with $(\eps\mu)^2 \ge \Lambda$ and \eqref{eq:thm:rsize:Pr} 
establishes \eqref{eq:rsize}. 
\end{proof}
The proof shows that \eqref{eq:rsize} holds with $c$ replaced by 
$1-\indic{\eps < 1, \mu>0}\Lambda/(\Lambda+(\eps\mu)^2)$, and that the 
left hand side of \eqref{eq:rsize} can be strengthened to 
$\Pr(X < (1-\eps) \E X)$ whenever $\eps \in (0,1)$ and $\mu > 0$ (we 
henceforth omit analogous remarks).

In applications where constant factors in the exponent are important, 
the following variant of Theorem~\ref{thm:rsize} usually gives better 
results when $\eps \to 0$ and $L=(\eps \mu)^2/\Lambda \to \infty$ 
(by setting $\tau = 6 \max\{\eps, L^{-1/2}\}$; see 
Lemma~\ref{lem:varphi3} with $A=(1+\tau)/k$). 
\begin{theorem}\label{thm:rsize2}
Let $X=\sum_{\alpha \in \cX} I_{\alpha}$, $\mu=\E X$ and $\Lambda$ be 
defined as in Section~\ref{sec:intro}. 
Suppose that $\vp=(p, \ldots, p) \in [0,1]^N$ and $\min_{\alpha \in \cX}|Q(\alpha)| \ge k \ge 1$. 
For all $\eps,\tau \in (0,1]$ satisfying $\tau \ge \indic{k > 1}6\eps$ 
and $(\eps \mu)^2 \ge 4\tau^{-2}\Lambda$, with $c=1/2$, 
\begin{equation}\label{eq:rsize2}
\Pr(X \le (1-\eps) \E X) \ge c \Pr(|\Gamma_{\vp}| \le (1-(1+\tau)\eps/k) \E |\Gamma_{\vp}|) .
\end{equation}
\end{theorem}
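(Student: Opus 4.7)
The plan is to adapt the proof of Theorem~\ref{thm:rsize} to the new threshold $m = (1-(1+\tau)\eps/k)Np = (1-(1+\tau)\eps/k)\E|\Gamma_{\vp}|$. As there, I would first reduce to a uniform conditional model: writing $\Pr_j$ for the probability conditioned on $|\Gamma_{\vp}|=j$, the tower property yields
\[
\Pr(X\le (1-\eps)\mu) \ge \Pr(|\Gamma_{\vp}|\le m) \cdot \min_{0\le j\le m}\Pr_j(X\le (1-\eps)\mu),
\]
so it suffices to show $\Pr_j(X\le (1-\eps)\mu)\ge 1/2$ for every such $j$ (and one may assume $m\ge 0$ since otherwise the right-hand side of \eqref{eq:rsize2} is zero). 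As in the proof of Theorem~\ref{thm:rsize}, conditionally on $|\Gamma_{\vp}|=j$, the set $\Gamma_{\vp}$ is uniform over $j$-subsets of $\Gamma$.

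Next I would reprove the conditional moment bounds with the new threshold. The combinatorial calculation is unchanged:
\[
\E_j(I_\alpha)=\prod_{0\le i<|Q(\alpha)|}\frac{j-i}{N-i}\le (j/N)^{|Q(\alpha)|}\le \bigl((1-(1+\tau)\eps/k)p\bigr)^{|Q(\alpha)|}\le a\,\E I_\alpha,
\]
where $a=(1-(1+\tau)\eps/k)^k$; here the last inequality uses $|Q(\alpha)|\ge k$ and $0\le 1-(1+\tau)\eps/k\le 1$. The same argument for $\alpha\sim\beta$ gives $\E_j(I_\alpha I_\beta)\le a\,\E(I_\alpha I_\beta)$, and the nonpositive-covariance estimate for disjoint $Q(\alpha),Q(\beta)$ carries over verbatim (it never used $|Q(\alpha)|\ge 2$). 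Summing yields $\E_j(X)\le a\mu$ and $\Var_j(X)\le a\Lambda\le \Lambda$.

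The main obstacle, which is the only place where the case distinction $k=1$ versus $k\ge 2$ really bites, is to convert the bound on $a$ into the estimate $(1-\eps)\mu-\E_j(X)\ge \tau\eps\mu/2$. When $k=1$ one has $a=1-(1+\tau)\eps$, giving a gap of $\tau\eps\mu$ with room to spare. When $k\ge 2$ one must argue via $a\le e^{-(1+\tau)\eps}\le 1-(1+\tau)\eps+\tfrac12((1+\tau)\eps)^2\le 1-(1+\tau)\eps+2\eps^2$ and then use the hypothesis $\tau\ge 6\eps$ (which forces $2\eps\le\tau/3$, so the quadratic correction is absorbed). Both cases therefore give $a\le 1-\eps-\tau\eps/2$, hence the desired gap.

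Finally, with this gap in hand, the one-sided Chebyshev inequality of Claim~\ref{cl:Ch} applied to $Z=X$ under $\Pr_j$ with $t=(1-\eps)\mu-\E_j(X)\ge \tau\eps\mu/2$ gives
\[
\Pr_j(X\ge (1-\eps)\mu)\le \frac{\Var_j(X)}{\Var_j(X)+t^2}\le \frac{\Lambda}{\Lambda+(\tau\eps\mu/2)^2}\le \tfrac12,
\]
since $(\eps\mu)^2\ge 4\tau^{-2}\Lambda$ forces $(\tau\eps\mu/2)^2\ge\Lambda$ and the ratio $v/(v+t^2)$ is increasing in $v$. Combining with the conditioning step yields \eqref{eq:rsize2} with $c=1/2$.
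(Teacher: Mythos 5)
Your proposal is correct and follows essentially the same route as the paper's proof: conditioning on $|\Gamma_{\vp}|=j$ and using the uniform model, the conditional moment bounds $\E_j(X)\le(1-\lambda)^k\mu$ and $\Var_j(X)\le\Lambda$ with $\lambda=(1+\tau)\eps/k$, the $k=1$ versus $k\ge 2$ case split exploiting $\tau\ge 6\eps$ to obtain the gap $\tau\eps\mu/2$, and then the one-sided Chebyshev inequality together with $(\eps\mu)^2\ge 4\tau^{-2}\Lambda$. The only (immaterial) difference is that you bound $(1-\lambda)^k\le e^{-\lambda k}\le 1-\lambda k+(\lambda k)^2/2$ where the paper uses $1-\lambda k+(\lambda k)^2/\bigl(2(1-\lambda k)\bigr)$; both give the required estimate.
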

\begin{proof}
Let $\lambda = (1+\tau)\eps/k$ and $m=(1-\lambda)\E |\Gamma_{\vp}|$. 
As \eqref{eq:rsize2} is trivial otherwise, we henceforth assume 
$\Pr(|\Gamma_{\vp}| \le m) > 0$, which implies $m \ge 0$. 
Now, \eqref{eq:thm:rsize:Pr} carries over mutatis mutandis, and, 
with similar reasoning as in the proof of Theorem~\ref{thm:rsize}, 
we may henceforth assume $\min\{m,p,\mu\}>0$. 
Furthermore, as $\min_{\alpha \in \cX}|Q(\alpha)| \ge k$, the 
calculations leading to \eqref{eq:thm:rsize:EVar} imply 
\begin{equation}\label{eq:thm:rsize2:EVar}
\max_{0 \le j \le m}\E_j(X) \le (1-\lambda)^k \mu \quad \text{ and } \quad \max_{0 \le j \le m}\Var_j(X) \le \Lambda .
\end{equation}
If $k=1$, then $(1-\eps) - (1-\lambda)^k = \lambda-\eps=\tau \eps$, 
and we now establish a similar bound for $k > 1$. 
Note that $\lambda k = (1+\tau)\eps \le 2 \eps \le \tau/3 < 1$ and 
\[
(1-\lambda)^k \le e^{-\lambda k} \le 1 - \lambda k + \sum_{j \ge 2} \frac{(\lambda k)^j}{j!} \le 1-\lambda k + \frac{(\lambda k)^2}{2(1-\lambda k)} .
\]
Recalling $\lambda k = (1+\tau)\eps$, $\eps \le \tau/6$ and 
$\tau \le 1$, a short calculation shows that 
\[
(1-\eps) - (1-\lambda)^k \ge \tau \eps\left(1 - \frac{(1+\tau)^2\eps}{2\tau(1-(1+\tau)\eps)}\right) \ge \tau \eps /2 .
\]
Consequently, using \eqref{eq:thm:rsize2:EVar} and the one-sided 
Chebyshev's inequality (Claim~\ref{cl:Ch}), we infer that for 
every $0 \le j \le m$ we have 
\begin{equation*}\label{eq:thm:rsize2:PrCh}
\Pr_j(X > (1-\eps)\mu) \le \Pr_j(X \ge \E_j(X) + \tau \eps \mu /2) \le \Lambda/(\Lambda+ \tau^2(\eps\mu)^2/4) ,
\end{equation*}
which together with $(\eps\mu)^2 \ge 4 \tau^{-2}\Lambda$ and 
\eqref{eq:thm:rsize:Pr} establishes \eqref{eq:rsize2}. 
\end{proof}

\subsection{Symmetric decomposition}\label{sec:bstrp:sdcmp}
In general, the conditional expected value of $X$ is difficult to 
compute (as we do not have explicit formulas as in \eqref{eq:thm:rsize:E}). 
Our second approach shows that we can overcome this obstacle using a 
\emph{symmetric decomposition} of $X$. As an illustration, we again 
consider the number of copies of $H$ in $G_{n,p}$. Clearly, for every 
$G \subseteq H$ we have $\Pr(X_H=0) \ge \Pr(X_G=0)$. 
The basic idea is now that, by counting the number of $H$-copies 
extending each copy of $G$, we ought to be able to argue as follows: 
if $X_G$ is `too small', then the (conditional) expected value of $X_H$ 
is also `too small'. To avoid clutter, we henceforth use the abbreviation 
\begin{equation}\label{conv:Iab}
I_{\alpha \setminus \beta} = \indic{Q(\alpha) \setminus Q(\beta) \subseteq \Gamma_{\vp}} .
\end{equation}
Let $\cH=\cH_n$ contain all subgraphs isomorphic to $H$ in $K_n$, and 
define $Q(\alpha)=E(\alpha)$ for all $\alpha \in \cH$ (here 
$Q(\alpha) \neq \alpha$ is crucial to allow for isolated vertices in $H$). 
The key observation is that, by symmetry, there is a constant $w>0$ such 
that we may write 
\[
X_H = w\sum_{\beta \in \cG} I_{\beta} \sum_{\alpha \in \cH: \beta \subseteq \alpha} I_{\alpha \setminus \beta}, 
\]
 where $\E \bigl(\sum_{\alpha \in \cH: \beta \subseteq \alpha} I_{\alpha \setminus \beta}\bigr)$ 
is \emph{independent} of the choice of $\beta \in \cG$. 
The point is that, since $\E(I_{\beta}I_{\alpha \setminus \beta})=\E I_{\beta} \E I_{\alpha \setminus \beta}$ 
and $X_{G}=\sum_{\beta \in \cG}I_{\beta}$, this allows us to factorize 
$\E X_H$ in terms of $\E X_{G}$. Indeed, for any $\tilde{\beta} \in \cG$ we have 
\[
\E X_H = w \E \bigl(\sum_{\alpha \in \cH: \tilde{\beta} \subseteq \alpha} I_{\alpha \setminus \tilde{\beta}}\bigr) \sum_{\beta \in \cG}\E I_{\beta} = w \E X_{G} \E \bigl(\sum_{\alpha \in \cH: \tilde{\beta} \subseteq \alpha} I_{\alpha \setminus \tilde{\beta}}\bigr) .
\]
Intuitively, our approach exploits that correlation inequalities 
can be used to obtain a similar factorization of the \emph{conditional} 
expected value of $X_H$.

With the subgraphs example in mind, the following theorem should be 
interpreted under the premise that the lower bound is exponentially 
small in $\Theta((\eps \mu)^2/\Lambda)$. In other words, the 
multiplicative $\gamma\eps$ error-term ought to be negligible as 
long as, say, $\gamma\eps \ge e^{-(\eps \mu)^2/\Lambda}$ holds. 
The crux is that this inequality is equivalent to 
$(\eps \mu)^2/\Lambda \ge \log\bigl(1/(\gamma\eps)\bigr)$, which 
matches our usual condition up to the logarithmic factor. 
On first reading it might be useful to consider the important 
special case exemplified above, where $w_{\alpha,\beta}=w>0$, 
$\cX(\beta)=\{\alpha \in \cX: Q(\beta) \subseteq Q(\alpha)\}$ and 
$\kappa=0$. 
\begin{theorem}\label{thm:rcor}
Let $Y=\sum_{\beta \in \cY} I_{\beta}$, where $\bigl(Q(\beta)\bigr)_{\beta \in \cY}$ 
is a family of subsets of $\Gamma$. 
Suppose that there are $w_{\alpha,\beta} \in [0,\infty)$ and families 
$\bigl(Q(\alpha)\bigr)_{\alpha \in \cX(\beta)}$ of subsets of $\Gamma$ 
such that $X = \sum_{\beta \in \cY} I_{\beta} X_{\beta}$, where 
$X_{\beta} = \sum_{\alpha \in \cX(\beta)} w_{\alpha,\beta} I_{\alpha \setminus \beta}$ 
satisfies $\max_{\beta \in \cY} \E X_{\beta} \le (1+\kappa) \min_{\beta \in \cY} \E X_{\beta}$ for $\kappa \in [0,\infty)$. 
For all $\eps \in [0,1]$ and $\gamma \in [0,\infty)$ satisfying 
$\gamma \eps \ge 2\kappa$ and $\indic{\E Y=0}\gamma \eps \le 2$, with $c=1/2$, 
\begin{equation}\label{eq:rcor}
\Pr(X \le (1-\eps) \E X) \ge c \gamma\eps \Pr(Y \le (1-(1+\gamma)\eps) \E Y) .
\end{equation}
\end{theorem}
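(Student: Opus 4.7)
The strategy is the one sketched in the introduction to Section~\ref{sec:bstrp}. Set $\cE = \{Y \le (1-(1+\gamma)\eps)\E Y\}$, a decreasing event, and work towards
\[
\Pr\bigl(X \le (1-\eps)\E X \mid \cE\bigr) \ge \gamma\eps/2,
\]
from which the claim follows by multiplying through by $\Pr(\cE)$. First I would dispose of the degenerate cases. If $\E Y = 0$ then $I_\beta \equiv 0$ for every $\beta \in \cY$, so $Y \equiv X \equiv 0$ and the claim reduces to $1 \ge \tfrac12\gamma\eps$, which is ensured by $\indic{\E Y=0}\gamma\eps \le 2$. If $\mu_Y > 0$ but $\Pr(\cE) = 0$ or $\E X = 0$ the inequality is trivial. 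Thus we may assume $\mu_Y, \E X > 0$, $(1+\gamma)\eps \le 1$, and $\Pr(\cE) > 0$.

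The heart of the argument is the correlation estimate
\[
\E\bigl(I_\beta X_\beta \mid \cE\bigr) \le \E X_\beta \cdot \E\bigl(I_\beta \mid \cE\bigr) \qquad \text{for every } \beta \in \cY.
\]
The point is that $I_\beta$ depends only on the coordinates of $\Gamma_\vp$ inside $Q(\beta)$, while $X_\beta = \sum_{\alpha \in \cX(\beta)} w_{\alpha,\beta} I_{\alpha\setminus\beta}$ depends only on coordinates outside $Q(\beta)$ (by \eqref{conv:Iab}), and $\indic{\cE}$ is a \emph{decreasing} function of all coordinates because $Y$ is increasing. Fixing the configuration inside $Q(\beta)$ and applying Harris's inequality~\cite{Harris1960} to the product measure on the remaining coordinates -- where $X_\beta$ is increasing and $\indic{\cE}$ is decreasing -- yields the bound above. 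Summing over $\beta$, using the independence of $I_\beta$ and $X_\beta$ (their coordinate sets are disjoint), the hypothesis $\max_\beta \E X_\beta \le (1+\kappa)\min_\beta \E X_\beta$, and the deterministic bound $\E(Y\mid\cE) \le (1-(1+\gamma)\eps)\mu_Y$, one obtains
\[
\E(X\mid\cE) \le \bigl(\max_\beta \E X_\beta\bigr)\E(Y\mid\cE) \le (1+\kappa)\bigl(1-(1+\gamma)\eps\bigr)\E X,
\]
where the last step uses $\E X = \sum_\beta \E I_\beta \cdot \E X_\beta \ge (\min_\beta \E X_\beta)\mu_Y$.

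Finally, Markov's inequality applied to $X \ge 0$ conditional on $\cE$ gives
\[
\Pr\bigl(X > (1-\eps)\E X \mid \cE\bigr) \le \frac{(1+\kappa)(1-(1+\gamma)\eps)}{1-\eps} \le 1 - \gamma\eps/2,
\]
the last step being an elementary algebraic check that uses $\gamma\eps \ge 2\kappa$ (the difference simplifies to $\gamma\eps^2(2+\gamma)/2 \ge 0$). Taking complements yields the desired conditional lower bound, and multiplying by $\Pr(\cE)$ completes the proof. The main obstacle is the correlation step: the naive FKG bound $\E(X\mid\cE) \le \E X$ is far too weak, and the essential idea is to peel off each $\beta$ individually by splitting the product measure according to whether coordinates lie inside or outside $Q(\beta)$, thereby converting the correlated conditional expectation into the product $\E X_\beta \cdot \E(I_\beta\mid\cE)$.
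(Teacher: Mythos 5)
Your proposal is correct and follows essentially the same route as the paper: condition on the decreasing event $\cE=\{Y \le (1-(1+\gamma)\eps)\E Y\}$, bound $\E(X\mid\cE)$ via a Harris-type correlation inequality exploiting that $I_\beta$ and $X_\beta$ live on disjoint coordinate sets, compare with $\E X \ge (1+\kappa)^{-1}\E Y\max_\beta \E X_\beta$, and finish with Markov's inequality (your algebraic check using $\gamma\eps\ge 2\kappa$ is the same estimate, just arranged slightly differently). The only cosmetic difference is that you re-derive the needed correlation step directly (fix the coordinates in $Q(\beta)$, apply Harris outside), whereas the paper cites it as a separate claim of Bollob\'as and Riordan, whose proof is the same argument.
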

If $\eps \nearrow 1$ or $\eps=1$ holds, then, by applying Lemma~\ref{lem:LT4} 
to $Y$, we often can improve \eqref{eq:rcor} via 
\begin{equation}\label{eq:rcor:0}
\Pr(X \le (1-\eps) \E X) \ge \Pr(X =0) \ge \Pr(Y=0) .
\end{equation}
The proof of Theorem~\ref{thm:rcor} hinges on the following simple 
consequence of Harris' inequality~\cite{Harris1960}, 
which was observed by Bollob{\'a}s and Riordan 
(see Lemma~6 in~\cite{BR1998}).
\begin{claim}\label{cl:cor}
For the probability space induced by $\Gamma_{\vp}$, suppose that 
$\cD$ is a decreasing event with $\Pr(\cD)>0$, and that $\cI_1$ and 
$\cI_2$ are increasing events with 
$\Pr(\cI_1 \cap \cI_2) = \Pr(\cI_1) \Pr(\cI_2)$. Then 
\begin{equation}\label{eq:cl:cor}
\Pr(\cI_1 \cap \cI_2 \mid \cD) \le \Pr(\cI_1) \Pr(\cI_2 \mid \cD) .
\end{equation}
\end{claim}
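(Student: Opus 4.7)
The plan is to reduce the desired bound to a plain inequality between intersection probabilities and then exploit the assumed independence of $\cI_1$ and $\cI_2$ to flip the Harris inequality into a useful direction. Multiplying through by $\Pr(\cD)>0$, the claim \eqref{eq:cl:cor} is equivalent to
\begin{equation*}
\Pr(\cI_1 \cap \cI_2 \cap \cD) \le \Pr(\cI_1)\Pr(\cI_2 \cap \cD).
\end{equation*}
The naive attempt of applying Harris directly to the left-hand side (using that $\cI_1 \cap \cI_2$ is increasing and $\cD$ is decreasing) only gives the upper bound $\Pr(\cI_1)\Pr(\cI_2)$, which goes the wrong way since Harris also yields $\Pr(\cI_2 \cap \cD) \le \Pr(\cD)\Pr(\cI_2)$.

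The key trick I would use is to pass to the complementary event $\cD^{c}$, which is \emph{increasing}. Writing
\begin{equation*}
\Pr(\cI_1 \cap \cI_2 \cap \cD) = \Pr(\cI_1 \cap \cI_2) - \Pr(\cI_1 \cap \cI_2 \cap \cD^{c}),
\end{equation*}
the two increasing events $\cI_1$ and $\cI_2 \cap \cD^{c}$ are now correlated the right way for Harris' inequality~\cite{Harris1960}, which gives
\begin{equation*}
\Pr(\cI_1 \cap \cI_2 \cap \cD^{c}) \ge \Pr(\cI_1)\Pr(\cI_2 \cap \cD^{c}).
\end{equation*}
Inserting this lower bound, together with the hypothesis $\Pr(\cI_1 \cap \cI_2) = \Pr(\cI_1)\Pr(\cI_2)$, yields
\begin{equation*}
\Pr(\cI_1 \cap \cI_2 \cap \cD) \le \Pr(\cI_1)\bigl[\Pr(\cI_2) - \Pr(\cI_2 \cap \cD^{c})\bigr] = \Pr(\cI_1)\Pr(\cI_2 \cap \cD),
\end{equation*}
and dividing by $\Pr(\cD)$ completes the proof.

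The main conceptual hurdle is noticing that Harris cannot be applied directly with $\cD$ on the same side as $\cI_1 \cap \cI_2$; one must move to $\cD^{c}$ so that all participating events become increasing, at which point the assumed independence $\Pr(\cI_1 \cap \cI_2) = \Pr(\cI_1)\Pr(\cI_2)$ converts the Harris bound into exactly the desired direction. Everything else is just inclusion-exclusion on $\cD$ and $\cD^{c}$.
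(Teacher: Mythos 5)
Your proof is correct: passing to the increasing event $\cD^{c}$, applying Harris to $\cI_1$ and $\cI_2\cap\cD^{c}$, and then using the hypothesis $\Pr(\cI_1\cap\cI_2)=\Pr(\cI_1)\Pr(\cI_2)$ gives exactly $\Pr(\cI_1\cap\cI_2\cap\cD)\le\Pr(\cI_1)\Pr(\cI_2\cap\cD)$, and dividing by $\Pr(\cD)>0$ finishes the argument. The paper does not prove Claim~\ref{cl:cor} itself but cites Lemma~6 of Bollob\'as and Riordan~\cite{BR1998}; your complementation-plus-Harris argument is the standard short proof of that lemma, so there is nothing to correct.
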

\begin{proof}[Proof of Theorem~\ref{thm:rcor}]
Let $y=(1-(1+\gamma)\eps) \E Y$ and $\mu=\E X$. As \eqref{eq:rcor} is 
trivial otherwise, we henceforth assume $\gamma \eps >0$ and 
$\Pr(Y \le y)>0$, which since $Y \ge 0$ implies $y \ge 0$. 
If $\E Y = 0$, then $\Pr(Y=0)=\Pr(Y \le y)$, and, since we then assume 
$1 \ge \gamma\eps/2$, 
\eqref{eq:rcor:0} establishes \eqref{eq:rcor}. 
Henceforth we thus assume $\E Y > 0$, so that $y \ge 0$ implies 
$1 \ge (1+\gamma)\eps > \max\{\eps,\gamma \eps\}$. Note that 
\begin{equation}\label{eq:thm:rcor:Pr}
\Pr(X \le (1-\eps) \mu) \ge \Pr(Y \le y) \Pr(X \le (1-\eps)\mu \mid Y \le y) .
\end{equation}
Since $\E(I_{\beta}I_{\alpha \setminus \beta}) = \E I_{\beta} \E I_{\alpha \setminus \beta}$, 
using the definitions of $X$, $X_{\beta}$ and $Y$ we deduce 
\begin{equation}\label{eq:thm:rcor:E}
\mu = \E X = \sum_{\beta \in \cY} \E I_{\beta} \E X_{\beta} \ge \E Y \min_{\beta \in \cY} \E X_{\beta} \ge (1+\kappa)^{-1} \E Y \max_{\beta \in \cY} \E X_{\beta} .
\end{equation}
We write $\cI_{\alpha}$ and $\cI_{\alpha \setminus \beta}$ for the 
increasing events that $I_{\alpha}=1$ and $I_{\alpha \setminus \beta}=1$, 
respectively. Hence $\Pr(\cI_{\alpha\setminus\beta}\cI_{\beta})=\Pr(\cI_{\alpha\setminus\beta})\Pr(\cI_{\beta})$. 
Clearly, $Y \le y$ is a decreasing event. Using Claim~\ref{cl:cor} 
together with \eqref{eq:thm:rcor:E} and $(1-(1+\gamma)\eps)(1+\kappa) \le 1-(1+\gamma/2)\eps$, 
it follows that
\begin{equation}\label{eq:thm:rcor:EC}
\begin{split}
\E(X \mid Y \le y) & = \sum_{\beta \in \cY} \sum_{\alpha \in \cX(\beta)} w_{\alpha,\beta} \Pr(\cI_{\alpha\setminus\beta}\cI_{\beta} \mid Y \le y) \le \sum_{\beta \in \cY} \Pr(\cI_{\beta} \mid Y \le y) \sum_{\alpha \in \cX(\beta)} w_{\alpha,\beta} \Pr(\cI_{\alpha\setminus\beta}) \\
& \le \E(Y \mid Y \le y) \max_{\beta \in \cY} \E X_{\beta} 
 \le  y \max_{\beta \in \cY} \E X_{\beta} 
\le (1-(1+\gamma/2)\eps) \mu.
\end{split}
\end{equation}
Let $\lambda = 1+\gamma/2$. If $\mu > 0$, then, using Markov's 
inequality, we infer from \eqref{eq:thm:rcor:EC} 
\begin{equation}\label{eq:thm:rcor:PrM}
\Pr(X > (1-\eps)\mu \mid Y \le y) \le \frac{1-\lambda\eps}{1-\eps} = 1-\frac{(\lambda-1)\eps}{1-\eps} \le 1-\gamma\eps/2 ,
\end{equation}
which together with \eqref{eq:thm:rcor:Pr} establishes 
\eqref{eq:rcor}. Finally, if $\mu=0$, then 
 $\Pr(X>0)=0$ and \eqref{eq:rcor} follows trivially from the fact
$1>\gamma\eps$ established above.
\end{proof}
It would be desirable to use Chebyshev's inequality in \eqref{eq:thm:rcor:PrM}, 
since this presumably would improve the seemingly suboptimal 
$\gamma\eps$ term. Here one technical obstacle is that Claim~\ref{cl:cor} 
can, in general, \emph{not} be strengthened to 
\begin{equation}\label{eq:cl:cor:wrong}
\Pr(\cI_1 \cap \cI_2 \mid \cD) \le \Pr(\cI_1 \mid \cD) \Pr(\cI_2 \mid \cD) . 
\end{equation}
Indeed, a short calculation shows that, for $\Gamma=[n]=\{1,\ldots,n\}$ 
and $\vp=(p, \ldots,p)$ with $n \ge 3$ and $p \in (0,1)$, the events 
$\cI_i=\{i \in \Gamma_{\vp}\}$ and $\cD=\{|\Gamma_{\vp}| \le 1 \text{ or } \Gamma_{\vp}=\{1,2\}\}$ 
provide a counterexample (where, moreover, equality holds in 
\eqref{eq:cl:cor}). It would be interesting to know whether there is 
perhaps some approximate version of \eqref{eq:cl:cor:wrong} that 
suffices for our purposes.

The existence of a symmetric decomposition may not always be obvious. 
We hope that the following two examples from additive combinatorics 
serve as inspiration for future applications of Theorem~\ref{thm:rcor} 
(or its method of proof). In both we consider $\vp=(p, \ldots, p)$ and 
$Q(\alpha)=\alpha$, and the basic idea is to `symmetrize' $X$ using 
non-uniform `weights' $w_{\alpha,\beta}$ (and $\kappa\neq 0$). 
In the first example, we let $\cX$ contain all \emph{arithmetic 
progressions} of length $k \ge 2$ in $\Gamma=[n]$, i.e., each 
$\alpha \in \cX$ equals $\{b,b+d, \ldots, b+(k-1)d\} \subseteq [n]$ 
for some $b=b_{\alpha}$ and $d=d_{\alpha}$ with $b_{\alpha},d_{\alpha} \ge 1$. 
For every $\beta \in \cY=[n]$ we define $\cX(\beta)$ as the set of 
$\alpha \in \cX$ where $\beta=b_{\alpha}$ or $\beta=b_{\alpha}+(k-1)d_{\alpha}$, 
and set $w_{\alpha,\beta}=1/2$. Since each $\alpha \in \cX$ 
contributes to exactly two $X_{\beta}$, we have 
$X = \sum_{\beta \in \cY} I_{\beta}X_\beta$. Furthermore, careful 
counting yields 
\[
\E X_{\beta}=\frac{1}{2}\Bigl(\floorBL{\frac{n-\beta}{k-1}}+\floorBL{\frac{\beta-1}{k-1}}\Bigr)p^{k-1}=\Bigl(\frac{n}{2(k-1)}+O(1)\Bigr)p^{k-1}, 
\]
so $\kappa = O(1/n)$ suffices. 
In the second example, we let $\cX$ contain all \emph{Schur triples} 
in $\Gamma=[n]$, i.e., each $\alpha \in \cX$ equals $\{x,y, x+y\} \subseteq [n]$ 
for some $x=x_{\alpha}$ and $y=y_{\alpha}$ with $1 \le x_{\alpha} < y_{\alpha}$. 
For every $\beta \in \cY=[n]$ we define $\cX(\beta)$ as the set of 
all $\alpha \in \cX$ with $\beta \in \alpha$. We set $w_{\alpha,\beta}=1/2$ 
if $\beta = x_{\alpha}+y_{\alpha}$, and $w_{\alpha,\beta}=1/4$ otherwise. 
By counting triples, it is not hard to see that 
$X = \sum_{\beta \in \cY} I_{\beta}X_\beta$ and 
\[
\E X_{\beta}=\Bigl(\frac{1}{2}\floorBL{\frac{\beta-1}{2}} + \frac{\max\{n-2\beta,0\} + \min\{n-\beta,\beta-1\}}{4}\Bigr)p^2 = \Bigl(\frac{n}{4}+O(1)\Bigr)p^2, 
\] 
so $\kappa = O(1/n)$ suffices. 
Finally, in both examples routine calculations (analogous to Example~3.2 
in~\cite{JLR}) give $\mu^2/\Lambda = \Theta(\min\{\mu,np\})$. 
Since $\kappa = O(1/n)$ and $\mu^2/\Lambda=O(np)$, the natural condition 
$(\eps\mu)^2 =\Omega(\Lambda)$ thus implies $\kappa/\eps = O(1/n \cdot \sqrt{\mu^2/\Lambda}) = O(\sqrt{p/n})=o(1)$. 
In other words, the assumption $\gamma \eps \ge 2\kappa$ in 
Theorem~\ref{thm:rcor} is very mild, i.e., allows for $\gamma=o(1)$.

\subsection{Vertex symmetry}\label{sec:bstrp:vxsym}
In many applications the set $\Gamma$ has additional structure, and 
here our main focus is on the case where $\Gamma$ contains the edges 
of some hypergraph. Intuitively, `seeing' the underlying vertices 
introduces quite a bit of extra symmetry, and our third approach 
exploits this to step aside the conditioning issue we faced in the 
previous subsection. 
As an illustration, we consider, as before, the number of copies of 
$H$ in $G_{n,p}$. The basic idea is to partition the vertex set into 
$\cU$ and $[n] \setminus \cU$ with $|\cU| \approx n/2$, and then, for 
suitable $G \subseteq H$, to focus on the number of copies of $G$ 
completely contained in $\cU$, which we denote by $Y_G$. Note that 
$\E Y_G = \Theta(\E X_G)$. Perhaps rashly, we would like to argue 
that $Y_G \le (1-\eps) \E Y_G$ typically entails $X_H \le (1-\eps) \E X_H$. 
However, this is overly ambitious: since $Y_G$ is somewhat `local', 
we loose a bit when going to the `global' random variable $X_H$, 
and thus we need a slightly larger deviation of $Y_G$. 
Instead of counting all copies of $H$, a technical reduction allows 
us to focus on the number of \emph{pairs} $(H',G')$ of copies of $H$ 
and $G$ with $G' \subseteq H'$, $V(G') \subseteq \cU$ and 
$V(H') \setminus V(G') \subseteq [n] \setminus \cU$. 
Now, to make variance calculations feasible (i.e., to overcome the 
obstacle that \eqref{eq:cl:cor:wrong} may fail), we do not condition 
on $Y_G$, but rather on \emph{all edges} with both endvertices in $U$ 
(satisfying additional typical properties). For technical reasons, 
here our argument requires that all edges in the relevant graphs 
$H' \setminus G'$ have at least one endvertex outside of $\cU$, 
which, e.g., holds if all copies of $G$ in $H$ are \emph{induced} 
subgraphs. Luckily, it is not hard to check (see Lemma~\ref{lem:lambda:HG}) 
that the former condition always holds for some $G \subseteq H$ that 
determines the exponent, i.e., satisfies $\Lambda(X_H) = \Theta((\E X_H)^2/\E X_G)$. 

In the statement of the next theorem we restrict ourselves to subgraph 
counts in random hypergraphs. The approach works in a more general 
setting, but we resist the temptation of stating a very technical 
theorem (that would be difficult to apply). Instead, we tried to write 
the proof in a way that hopefully makes the basic setup and symmetry 
assumptions fairly transparent. 
In Theorem~\ref{thm:vxsym} the difference between $Y_G$ and $X_G$ is 
usually irrelevant in applications where constant factors in the 
exponent are immaterial: the point is that $G_{n,p}^{(k)}[\cU]$ has 
the same distribution as $G_{n',p}^{(k)}$ with $n'=|\cU|\approx n/2$. 
In comparison with Theorem~\ref{thm:rcor}, the key feature of 
Theorem~\ref{thm:vxsym} is that the natural condition 
$(\eps\E X_H)^2 = \Omega(\Lambda(X_H))$ suffices. 
\begin{theorem}\label{thm:vxsym}
Let $G \subseteq H$ be $k$-graphs with $e_G \ge 1$, where every copy 
of $G$ in $H$ is induced. Let $X_H$ be the number of copies 
of $H$ in $G^{(k)}_{n,p}$, and let $Y_G$ be the number of copies of 
$G$ in $G^{(k)}_{n,p}[\cU]$, where $\cU \subseteq [n]$ satisfies 
$\bigl||\cU|-n/2\bigr| \le \ell$. For all $n \ge n_0=n_0(H,\ell)$, 
$p \in [0,1]$ and $\eps \in (0,1]$ satisfying
$(\eps \E X_H)^2 \ge \Lambda(X_H)$, with $\lambda = 2^{v_H+3}$ and 
$c=2^{-(4^{v_G^2}+2)}$, 
\begin{equation}\label{eq:vxsym}
\Pr(X_H \le (1-\eps) \E X_H) \ge c \Pr(Y_G \le (1-\lambda\eps) \E Y_G) .
\end{equation}
\end{theorem}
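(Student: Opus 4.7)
The plan is to implement a conditioning argument in the spirit of Theorem~\ref{thm:rcor}, but exploiting the additional vertex-level symmetry available in the subgraph-counting setting. Write $\mu = \E X_H$ and let $\cE = \{Y_G \le (1-\lambda\eps)\E Y_G\}$; the goal is to establish $\Pr(X_H \le (1-\eps)\mu \mid \cE) \ge c$.

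First I would introduce the auxiliary random variable $Z$ counting ordered pairs $(H', G')$ in $G^{(k)}_{n,p}$ with $G' \subseteq H'$ a copy of $G$, $H'$ a copy of $H$, $V(G')\subseteq \cU$ and $V(H')\setminus V(G') \subseteq [n]\setminus \cU$. Writing $\cG_{\cU}$ for the family of copies of $G$ in $K_n^{(k)}[\cU]$ and $I_{G'} = \indic{G' \subseteq G^{(k)}_{n,p}}$, one has $Z = \sum_{G' \in \cG_{\cU}} I_{G'} W_{G'}$, where $W_{G'}$ counts the extensions of $G'$ to an $H$-copy with remaining vertices in $[n]\setminus\cU$ and all additional edges present. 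The crux is the induced hypothesis: every edge of $E(H')\setminus E(G')$ must have at least one endpoint in $V(H')\setminus V(G')\subseteq [n]\setminus\cU$, for otherwise such an edge would witness $G'$ sitting non-induced in $H'$. Consequently $W_{G'}$ depends only on edges with at least one endpoint outside $\cU$, hence is independent of the $\sigma$-algebra $\cF$ generated by edges inside $\cU$; by the symmetry permuting vertices within $\cU$ and within $[n]\setminus\cU$ separately, $w := \E W_{G'}$ does not depend on $G' \in \cG_{\cU}$. So $\E(Z\mid\cF)=wY_G$ and $\E Z = w\E Y_G$. A straightforward counting argument further identifies $Z = c_H X_H^{\text{spl}}$, where $X_H^{\text{spl}}$ is the number of \emph{split-compatible} copies (those with $|V(H')\cap\cU|=v_G$ and $H'[V(H')\cap\cU]\cong G$) and $c_H=c_H(H,G)>0$; using $\bigl||\cU|-n/2\bigr|\le\ell$ one gets $\E X_H^{\text{spl}} \ge \beta\mu$ for a constant $\beta=\beta(H,G)>0$, with $\Var Z=O_H(\Lambda(X_H))$ and $\Var(X_H - X_H^{\text{spl}}) = O_H(\Lambda(X_H))$.

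The argument then closes via two concentration steps. I would define a typical event $\cT$ on which simultaneously the $W_{G'}$'s are close to their common mean $w$ and the remainder $X_H - X_H^{\text{spl}}$ is close to its expectation; using $(\eps\mu)^2\ge\Lambda(X_H)$ a Chebyshev-type estimate gives $\Pr(\cT)\ge 1-o(1)$, and crucially $\cT$ is measurable with respect to edges with at least one endpoint outside $\cU$, so $\cT$ is independent of $\cF$ and hence of $\cE$. On $\cE\cap\cT$ one then obtains $Z\le (1+o(1))(1-\lambda\eps)w\E Y_G$, which yields $X_H^{\text{spl}}\le (1-\lambda\eps/2)\E X_H^{\text{spl}}$, while at the same time $X_H - X_H^{\text{spl}}\le \E(X_H-X_H^{\text{spl}}) + \eps\mu/2$. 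Choosing $\lambda=2^{v_H+3}$ so that $\lambda\beta/2 \ge 2$, these bounds combine to give $X_H\le(1-\eps)\mu$, whence $\Pr(X_H\le(1-\eps)\mu\mid\cE)\ge \Pr(\cT)\ge c$, as required.

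The main obstacle is obtaining this joint concentration via the event $\cT$. Although $Z$ admits the clean conditional-mean factorization $\E(Z\mid\cF)=wY_G$, controlling the $W_{G'}$'s simultaneously (and controlling $X_H - X_H^{\text{spl}}$ on $\cE$) cannot be done via an inequality of the form of \eqref{eq:cl:cor:wrong}, which in general fails. The remedy is a careful second-moment analysis of the overlap structure between pairs $(G', G'')$ of copies in $\cG_{\cU}$ and between their extensions, carried out uniformly over the $\lesssim 4^{v_G^2}$ possible overlap patterns; this uniform control is precisely what forces the stated constant $c=2^{-(4^{v_G^2}+2)}$.
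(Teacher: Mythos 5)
Your skeleton---counting split pairs $(H',G')$ across the partition $(\cU,[n]\setminus\cU)$, using the induced hypothesis to get the conditional factorization $\E(Z\mid\cF)=wY_G$, a one-sided Chebyshev step, and a separate treatment of the non-split remainder---matches the paper's proof. The gap is in the step where you actually control $Z$ on the event $\cE$. You posit a ``typical'' event $\cT$ that (i) has probability $1-o(1)$, (ii) is measurable with respect to the edges having an endpoint outside $\cU$, hence independent of $\cF$ and of $\cE$, and (iii) on $\cE\cap\cT$ forces both $Z\le(1+o(1))(1-\lambda\eps)w\E Y_G$ and $X_H-X_H^{\mathrm{spl}}\le\E(X_H-X_H^{\mathrm{spl}})+\eps\E X_H/2$. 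No such $\cT$ exists in general. First, $X_H-X_H^{\mathrm{spl}}$ involves copies of $H$ that use edges inside $\cU$ (for instance copies lying entirely in $\cU$), so the part of (iii) concerning the remainder is not outside-measurable and is therefore not independent of $\cE$; the paper instead separates the remainder via Harris' inequality applied to the two decreasing events $\{R_\cU\le r\}$ and $\{Z_\cU\le z\}$, not via independence. Second, and more seriously, certifying the smallness of $Z-\E(Z\mid\cF)=\sum_{G'}I_{G'}(W_{G'}-w)$ \emph{uniformly over all inside configurations allowed by $\cE$} by an outside-measurable event would essentially require every individual extension count $W_{G'}$ to be close to $w$; in sparse regimes (say $k=2$, $G$ an edge, $H$ a triangle, $np^2=O(1)$) the $W_{G'}$ are Poisson-like and not concentrated, so that event has probability $o(1)$, not $1-o(1)$. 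The fluctuation of $Z$ around $wY_G$ depends jointly on which copies $G'$ are present in $\cU$ and on how they overlap, and conditioning on $\cE$ can distort exactly that overlap structure---this is the failure of \eqref{eq:cl:cor:wrong} which you name but do not actually circumvent.

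The missing ingredient is the paper's device: condition on the \emph{entire} inside configuration (the measure $\Pr^*$), and bound the conditional variance $\Var^* Z_{\cU}$ not unconditionally but on an additional \emph{inside-measurable} event $\cD=\{\Psi_F\le 2\E\Psi_F \text{ for all } F\in\cF\}$ controlling the counts of unions of two $G$-copies in $\cU$. Since $\cE$ and $\cD$ are both decreasing, Harris plus Markov give $\Pr(\cE\cap\cD)\ge 2^{-|\cF|}\Pr(\cE)$, and on $\cD$ one shows $\Var^* Z_{\cU}\le 2\tau^2\Lambda$, after which the one-sided Chebyshev inequality applies conditionally. This is where the constant $c=2^{-(4^{v_G^2}+2)}$ really comes from ($|\cF|\le 4^{v_G^2}$); your closing remark that a uniform second-moment analysis over overlap patterns ``forces'' this constant is inconsistent with your own scheme, in which a $(1-o(1))$-probability event independent of $\cE$ would yield a constant near $1$. (Minor point: the hypothesis $(\eps\E X_H)^2\ge\Lambda(X_H)$ only gives Chebyshev bounds of the form $1/2$, not $1-o(1)$, but that is cosmetic next to the structural issue above.)
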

\begin{proof}
Let $\mu=\E X_H$, $\Lambda = \Lambda(X_H)$, $\Gamma=E(K^{(k)}_n)$ and 
$\vp=(p, \ldots, p)$, so that $\Gamma_{\vp}=E(G^{(k)}_{n,p})$. Let $\cH$ 
and $\cG$ contain all subgraphs isomorphic to $H$ and $G$ in 
$K^{(k)}_n$, respectively. Define $Q(\sigma)=E(\sigma)$ for $\sigma \in \cH \cup \cG$. 
For brevity we henceforth use $I_{(\alpha_1 \cup \alpha_2) \setminus (\beta_1 \cup \beta_2)} = \indic{[Q(\alpha_1) \cup Q(\alpha_2)] \setminus [Q(\beta_1) \cup Q(\beta_2)] \subseteq \Gamma_{\vp}}$ 
and $I_{\sigma_1 \cup \sigma_2} = \indic{Q(\sigma_1) \cup Q(\sigma_2) \subseteq
  \Gamma_{\vp}}$
 analogous to \eqref{conv:Iab}.
Set $Z=\sum_{(\alpha,\beta) \in \cH \times \cG} \indic{\beta \subseteq \alpha}I_{\alpha}$. 
By symmetry, we have $\sum_{\beta \in \cG} \indic{\beta \subseteq \alpha}=\tau=\tau(H,G) \ge 1$ 
for all $\alpha \in \cH$. Hence $Z=\tau X$, $\E Z = \tau \E X_H$, 
$\Var Z = \tau^2 \Var X_H$ and 
\begin{equation}\label{eq:vxsym:Pr:1}
\Pr(X_H \le (1-\eps) \E X_H) = \Pr(Z \le (1-\eps) \E Z). 
\end{equation}
With foresight, we set $Z_S = \sum_{(\alpha,\beta) \in \cH \times \cG} \indic{\alpha \in \cH(S,\beta) \text{ and } \beta \in \cG(S)} I_{\alpha}$ 
for all $S \subseteq [n]$, where 
\begin{equation*}
\begin{split}
\cH(S,\beta)&=\{\alpha \in \cH: \beta \subseteq \alpha \text{ and } V(\alpha) \setminus V(\beta) \subseteq [n] \setminus S\}, \\
\cG(S) & =\{\beta \in \cG: V(\beta) \subseteq S\} .
\end{split}
\end{equation*}
Define $R_{\cU}=Z-Z_{\cU}$, $z=(1-\eps\lambda/2) \E Z_{\cU}$ and 
$r=(1-\eps) \E Z-z$. Using $Z=R_{\cU}+Z_{\cU}$ and Harris' inequality, 
it follows that 
\begin{equation}\label{eq:vxsym:Pr:2}
\Pr(Z \le (1-\eps) \E Z) \ge \Pr(R_{\cU} \le r \text{ and } Z_{\cU} \le z) \ge \Pr(R_{\cU} \le r) \Pr(Z_{\cU} \le z) . 
\end{equation}
The remainder of the proof is devoted to the following two 
inequalities, which together with \eqref{eq:vxsym:Pr:1}, 
\eqref{eq:vxsym:Pr:2} and $(\eps\mu)^2 \ge \Lambda$ imply 
\eqref{eq:vxsym}: 
\begin{align}
\label{eq:vxsym:Pr:T}
\Pr(R_{\cU} \le r) &\ge 1- \indic{\mu>0}\Lambda/(\Lambda+(\eps \mu)^2),\\
\label{eq:vxsym:Pr:Z}
\Pr(Z_{\cU} \le z) &\ge \bigl(1-\indic{\mu>0}\Lambda/(\Lambda+2(\eps \mu)^2)\bigr)4c\Pr(Y_G \le (1-\lambda\eps) \E Y_G). 
\end{align}

We note first that in the trivial case $\mu = 0$,   
almost surely $X=0$ and thus $Z=0$ which
implies $R_{\cU}=Z_{\cU}= 0$; hence also $z=0$ and $r=0$ 
so that \eqref{eq:vxsym:Pr:T}--\eqref{eq:vxsym:Pr:Z} follow trivially. 
We may thus assume $\mu>0$.

We next estimate $\E Z_{\cU}$. Let $\fX \subseteq [n]$ with 
$|\fX|=|\cU|$ be chosen uniformly at random, and independent of 
$\Gamma_{\vp}$. With the definitions of $\cH(\cdot,\beta)$ and 
$\cG(\cdot)$ in mind, using linearity of expectation we deduce 
\begin{equation}\label{eq:vxsym:E:fX}
\E(Z_{\fX} \mid \Gamma_{\vp}) = \sum_{(\alpha,\beta) \in \cH \times \cG} \indic{\beta \subseteq \alpha}\Pr(V(\beta) \subseteq \fX \text{ and } V(\alpha) \setminus V(\beta) \subseteq [n] \setminus \fX) I_{\alpha} ,
\end{equation}
where the measure $\Pr$ is with respect to the (random) choice of 
$\fX$. Note that, whenever $\beta \subseteq \alpha$, we have 
\begin{equation*}
\begin{split}
&\sigma_{\alpha,\beta} = \Pr(V(\beta) \subseteq \fX \text{ and } V(\alpha) \setminus V(\beta) \subseteq [n] \setminus \fX) = \frac{\binom{n-v_H}{|\cU|-v_G}}{\binom{n}{|\cU|}} .
\end{split}
\end{equation*}
Recall that $\bigl||\cU|-n/2\bigr| \le \ell$. For fixed $\ell$, $v_G$ 
and $v_H$ a short calculation shows that $\sigma_{\alpha,\beta} \to 2^{-v_H}$ 
as $n \to \infty$, so that $\sigma_{\alpha,\beta} \ge 2^{-(v_H+1)}= 4\lambda^{-1}$ 
for $n \ge n_0(H,\ell)$. Using \eqref{eq:vxsym:E:fX} and the definition 
of $Z$ we infer $\E(Z_{\fX} \mid \Gamma_{\vp}) \ge 4\lambda^{-1} Z$, so 
that $\E(Z_{\fX}) \ge 4\lambda^{-1} \E Z$. By definition, we have 
$\E(Z_{\fX} \mid \fX=S) = \E Z_{S}$ for all $S \subseteq [n]$ with 
$|S|=|\cU|$. Since $\E Z_S = \E Z_{\cU}$ by symmetry, we infer 
$\E Z_{\fX} = \E Z_{\cU}$, so that 
\begin{equation}\label{eq:vxsym:E:ZU}
\E Z_{\cU} \ge 4\lambda^{-1} \E Z. 
\end{equation}

Turning to \eqref{eq:vxsym:Pr:T}, note that $R_{\cU}$ is a restriction 
of $Z$ to a subset of all pairs $(\alpha,\beta) \in \cH \times \cG$. As 
Harris' inequality implies $\E(I_{\alpha_1}I_{\alpha_2}) \ge \E I_{\alpha_1} \E I_{\alpha_2}$, 
it follows that $\Var R_{\cU} \le \Var Z = \tau^2 \Var X_H \le \tau^2 \Lambda$. 
Recalling $\E R_{\cU} = \E Z - \E Z_{\cU}$ and the definitions of $r$ 
and $z$, using \eqref{eq:vxsym:E:ZU} we have 
$r-\E R_{\cU} = (\eps\lambda/2) \E Z_{\cU}-\eps\E Z \ge \eps \E Z = \tau \eps \mu$. 
So, if $\mu>0$, then the one-sided Chebyshev's inequality (Claim~\ref{cl:Ch}) 
yields 
\[
\Pr(R_{\cU} > r) \le \Pr(R_{\cU} \ge \E R_{\cU}+\tau \eps \mu) 
\le \tau^2\Lambda/(\tau^2\Lambda+(\tau\eps \mu)^2)
= \Lambda/(\Lambda+(\eps \mu)^2).
\]

In the remainder we focus on \eqref{eq:vxsym:Pr:Z}. Observing that $Y_G =\sum_{\beta \in \cG(\cU)} I_{\beta}$, 
we denote by $\cE$ the event that $Y_G \le (1-\lambda\eps) \E Y_G$ holds. 
With foresight, we define $X_{\beta} = \sum_{\alpha \in \cH(\cU,\beta)} I_{\alpha \setminus \beta}$ 
and $X_{\beta_1,\beta_2} =\sum_{(\alpha_1,\alpha_2) \in \cH(\beta_1,\beta_2)} I_{(\alpha_1 \cup \alpha_2) \setminus (\beta_1 \cup \beta_2)}$, 
where 
\[
\cH(\beta_1,\beta_2) =\bigl\{(\alpha_1,\alpha_2) \in \cH(\cU,\beta_1) \times \cH(\cU,\beta_2): \bigl[Q(\alpha_1) \cap Q(\alpha_2)\bigr]\setminus \bigl[Q(\beta_1) \cup Q(\beta_2)\bigr] \neq \emptyset\bigr\} .
\]
Let $\cF$ be the family of all pairwise non-isomorphic graphs that are 
unions of two (not necessarily distinct) copies of $G$. The point is 
that $\cF$ naturally defines a partition $(\cP_F)_{F \in \cF}$ of the 
set of all pairs of graphs $(\beta_1,\beta_2) \in \cG(\cU) \times \cG(\cU)$ 
with $\cH(\beta_1,\beta_2) \neq \emptyset$ (as each $\beta_1 \cup \beta_2$ 
is isomorphic to some $F \in \cF$). Furthermore, since every $F \in \cF$ 
satisfies $v_G \le v_F \le 2 v_G$, we have, say, 
$|\cF| \le 2^{\binom{2 v_G}{2}} \cdot 2^{v_G} \le 4^{v_G^2}$. 
Let $\Psi_{F} =\sum_{(\beta_1,\beta_2) \in \cP_F}I_{\beta_1 \cup \beta_2}$, 
and define $\cD$ as the event that $\Psi_{F} \le 2 \E \Psi_{F}$ for 
all $F \in \cF$. Using Harris' inequality and Markov's inequality, we 
deduce 
\begin{equation}\label{eq:vxsym:D}
\Pr(\cE \cap \cD) \ge \Pr(\cE) \prod_{F \in \cF} \Pr(\Psi_{F} \le 2 \E \Psi_{F}) \ge 2^{-|\cF|} \Pr(\cE) \ge 4c \Pr(\cE).
\end{equation}
For brevity, we write $\Pr^*$ for the conditional measure with respect 
to the status of all edges in $G^{(k)}_{n,p}[\cU]$. We use $\E^*$ and 
$\Var^*$ analogously. Since $\cE \cap \cD$ is determined by 
$E(G^{(k)}_{n,p}[\cU])$, we have 
\begin{equation}\label{eq:vxsym:Pr:Z2}
\Pr(Z_{\cU} \le z) \ge \Pr(\{Z_{\cU} \le z\} \cap \cE \cap \cD) = \E\bigl(\Pr^*(Z_{\cU} \le z) \indic{\cE \cap \cD} \bigr) .
\end{equation}
In the following we estimate $\Pr^*(Z_{\cU} \le z)$ whenever 
$\cE \cap \cD$ holds. Recall that for all $\beta \in \cG(\cU)$ and 
$\alpha \in \cH(\cU,\beta)$ we have $\beta \subseteq \alpha$, 
$V(\beta) \subseteq \cU$ and $V(\alpha) \setminus V(\beta) \subseteq [n] \setminus \cU$. 
Since every copy of $G$ in $H$ is induced, for all $f \in Q(\alpha) \setminus Q(\beta)$ 
we infer $f \not\in E(K^{(k)}_{n}[\cU])$. 
Using $Q(\beta) \subseteq Q(\alpha)$ 
it follows that $\E^* I_{\alpha}=I_{\beta}\E^* I_{\alpha \setminus \beta} = I_{\beta}\E I_{\alpha \setminus \beta}$. 
By symmetry, $\E X_{\beta}$ is independent of the choice of 
$\beta \in \cG(U)$, and so $\E^* Z_{\cU} = \sum_{\beta \in \cG(\cU)}I_{\beta} \E X_{\beta} = Y_G \E X_{\tilde{\beta}}$ 
for any $\tilde{\beta} \in \cG(\cU)$. Taking expectations, we deduce 
$\E Z_{\cU} = \E Y_G \E X_{\tilde{\beta}}$. Consequently $\E^* Z_{\cU} \le (1-\lambda\eps) \E Z_{\cU}$ 
whenever $\cE$ holds, in which case, using the definition of $z$ and 
\eqref{eq:vxsym:E:ZU}, we have 
\begin{equation}\label{eq:vxsym:z}
z - \E^* Z_{\cU} \ge (\eps\lambda/2) \E Z_{\cU} \ge 2 \eps \E Z = 2 \tau \eps \mu .
\end{equation} 
Turning to the conditional variance of $Z_{\cU}$, note that, by 
symmetry (analogous as for $Z$), we have 
\begin{equation}\label{eq:vxsym:Delta}
\begin{split}
\tau^2 \Lambda &= \sum_{\alpha \in \cH} \sum_{\substack{(\beta_1,\beta_2) \in \cG\times\cG:\\ \beta_1 \subseteq \alpha, \beta_2 \subseteq \alpha}} \E I_{\alpha} + \sum_{(\alpha_1,\alpha_2) \in \cH\times\cH: \alpha_1 \sim \alpha_2} \sum_{\substack{(\beta_1,\beta_2) \in \cG\times\cG:\\ \beta_1 \subseteq \alpha_1, \beta_2 \subseteq \alpha_2}} \E I_{\alpha_1 \cup \alpha_2}\\
& = \sum_{(\beta_1,\beta_2) \in \cG \times \cG} \E I_{\beta_1 \cup \beta_2} \sum_{\substack{(\alpha_1,\alpha_2) \in \cH\times\cH: \beta_1 \subseteq \alpha_1, \beta_2 \subseteq \alpha_2, \\ Q(\alpha_1) \cap Q(\alpha_2) \neq \emptyset}} \E I_{(\alpha_1 \cup \alpha_2) \setminus (\beta_1 \cup \beta_2)} .
\end{split}
\end{equation}
As before, $\E^* I_{\alpha_1 \cup \alpha_2} = I_{\beta_1 \cup \beta_2} \E^*I_{(\alpha_1 \cup \alpha_2)\setminus(\beta_1 \cup \beta_2)} = I_{\beta_1 \cup \beta_2} \E I_{(\alpha_1 \cup \alpha_2)\setminus(\beta_1 \cup \beta_2)}$ 
for all $(\beta_1,\beta_2) \in \cG(U)\times\cG(U)$ and $(\alpha_1,\alpha_2) \in \cH(U,\beta_1) \times \cH(U,\beta_2)$. 
It follows that 
\begin{equation*}\label{eq:vxsym:ZUVar:1}
\Var^* Z_{\cU} \le \sum_{(\beta_1,\beta_2) \in \cG(U)\times\cG(U)} I_{\beta_1 \cup \beta_2} \sum_{\substack{(\alpha_1,\alpha_2) \in \cH(U,\beta_1) \times \cH(U,\beta_2):\\ [Q(\alpha_1) \cap Q(\alpha_2)] \setminus [Q(\beta_1) \cup Q(\beta_2)] \neq \emptyset}} \E I_{(\alpha_1 \cup \alpha_2) \setminus (\beta_1 \cup \beta_2)} .
\end{equation*}
Now, recalling the definitions of $\cH(\beta_1,\beta_2)$, 
$X_{\beta_1,\beta_2}$, $\cF$ and $\Psi_{F}$, we infer 
\begin{equation*}\label{eq:vxsym:ZUVar:2}
\Var^* Z_{\cU} \le \sum_{F \in \cF}\sum_{(\beta_1,\beta_2) \in \cP_F} I_{\beta_1 \cup \beta_2} \E X_{\beta_1,\beta_2} \le \sum_{F \in \cF} \Psi_{F} \max_{(\beta_1,\beta_2)\in \cP_F}\E X_{\beta_1,\beta_2} .
\end{equation*}
By symmetry, we have $\max_{(\beta_1,\beta_2)\in \cP_F}\E X_{\beta_1,\beta_2} = \min_{(\beta_1,\beta_2)\in \cP_F}\E X_{\beta_1,\beta_2}$ 
for all $F \in \cF$. So, with analogous considerations as above, 
whenever $\cD$ holds we have 
\begin{equation}\label{eq:vxsym:ZUVar:3}
\begin{split}
\Var^* Z_{\cU} & \le 2\sum_{F \in \cF} \E \Psi_{F} \min_{(\beta_1,\beta_2)\in \cP_F}\E X_{\beta_1,\beta_2} = 2 \sum_{F \in \cF}\sum_{(\beta_1,\beta_2) \in \cP_F} \E I_{\beta_1 \cup \beta_2} \E X_{\beta_1,\beta_2} \\
&= 2 \sum_{(\beta_1,\beta_2) \in \cG(U)\times\cG(U)} \E I_{\beta_1 \cup \beta_2} \sum_{(\alpha_1,\alpha_2) \in \cH(\beta_1,\beta_2)} \E I_{(\alpha_1 \cup \alpha_1) \setminus (\beta_1 \cup \beta_2)} \le 2 \tau^2 \Lambda ,
\end{split}
\end{equation}
where the last inequality follows by comparison with \eqref{eq:vxsym:Delta}. 
If $\mu>0$, then, using \eqref{eq:vxsym:z}, the one-sided 
Chebyshev's inequality (Claim~\ref{cl:Ch}) 
and \eqref{eq:vxsym:ZUVar:3},  
whenever $\cE \cap \cD$ holds we have 
\begin{equation}\label{eq:vxsym:Pr:ZU}
\Pr^*(Z_{\cU} > z) \le \Pr^*(Z_{\cU} \ge \E^* Z_{\cU}+2 \tau \eps \mu) 
\le2\tau^2 \Lambda/(2\tau^2\Lambda+(2\tau\eps \mu)^2) 
= \Lambda/(\Lambda+2(\eps \mu)^2) .
\end{equation}
Inserting \eqref{eq:vxsym:Pr:ZU} into
\eqref{eq:vxsym:Pr:Z2}, we infer (for $\mu>0$) 
\[
\Pr(Z_{\cU} \le z) \ge \bigl(1-\Lambda/(\Lambda+2(\eps \mu)^2)\bigr) \Pr(\cE \cap \cD) ,
\]
which together with \eqref{eq:vxsym:D} implies \eqref{eq:vxsym:Pr:Z} 
by definition of $\cE$. 
\end{proof}
A variant of the proof applies to \emph{rooted} copies of $H$, see, 
e.g., Section~3 in~\cite{UTRSG} for a precise definition. 
The basic idea is to map the vertex set of the root $R$ to $[r]$, and 
the remaining vertices of $G$ and $H$ to $\cU \subseteq [n] \setminus [r]$ 
and $[n] \setminus (\cU \cup [r])$, respectively; 
we leave the details to the interested reader.

\section{Applications}\label{sec:ex}
In this section we illustrate the bootstrapping approaches of 
Section~\ref{sec:bstrp} via pivotal examples from additive and 
probabilistic combinatorics. In Section~\ref{sec:RS} we consider the 
lower tail of the number of arithmetic progressions (and Schur 
triples) in random subsets of the integers. In Section~\ref{sec:RHG} 
we then turn to our main example: the lower tail of subgraph 
counts in random hypergraphs.

\subsection{Random subsets of the integers}\label{sec:RS}
Let $X_k=X_k(n,p)$ be the number of arithmetic progressions of length 
$k \ge 2$ in the binomial random subset $\Gamma_{\vp}$ of the integers 
$\Gamma=[n]=\{1, \ldots, n\}$, where $\vp=(p, \ldots, p)$. 
Note that $\E X_k = \Theta(n^2p^k)$; see also Section~\ref{sec:bstrp:sdcmp}. 
The following theorem gives fair exponential bounds for the lower 
tail of $X_k$, and its proof closely follows the strategy outlined 
in Section~\ref{sec:bstrp}. 
\begin{theorem}\label{thm:AP}
Given $k \ge 2$, let $\Psi_k=\Psi_k(n,p) = \min\{n^2p^k,np\}$. 
There are positive constants $c$, $C$, $D$ and $n_0$, all depending only on $k$, such that for all $n \ge n_0$, $p \in [0,1)$ and $\eps \in (0,1]$ satisfying $\eps^2 \Psi_k \ge \indic{\eps < 1}D$ we have
\begin{equation}\label{eq:AP}
\exp\bigl\{- (1-p)^{-5} C \eps^2 \Psi_k\bigr\} \le \Pr(X_{k} \le (1-\eps) \E X_{k}) \le \exp\bigl\{-c \eps^2 \Psi_k\bigr\}.
\end{equation}
\end{theorem}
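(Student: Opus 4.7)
The plan is to obtain the upper bound of \eqref{eq:AP} by a direct application of \eqref{eq:J} combined with a standard variance calculation, and then to establish the matching lower bound by splitting the parameter range according to which of the two terms in $\Psi_k=\min\{n^2p^k,np\}$ realises the minimum.

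For the upper bound I would write $X_k=\sum_{\alpha\in\cX}I_\alpha$ with $\cX$ the family of length-$k$ APs in $[n]$ and $Q(\alpha)=\alpha$. A routine counting argument (compare Example~3.2 in~\cite{JLR} and the calculation at the end of Section~\ref{sec:bstrp:sdcmp}) then gives $\mu=\Theta(n^2p^k)$, $\Pi=p^k$ and $\Lambda=\Theta(n^2p^k+n^3p^{2k-1})$, so that $\mu^2/\Lambda=\Theta(\min\{n^2p^k,np\})=\Theta(\Psi_k)$. Combined with $\varphi(-\eps)\ge\eps^2/2$, inequality~\eqref{eq:J} then yields the upper bound of \eqref{eq:AP}.

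For the lower bound, the first case to treat is $np^{k-1}\le 1$, where $\Psi_k=n^2p^k=\Theta(\mu)$ and $\delta=\Lambda/\mu-1=O(1)$, so that $X_k$ falls into the weakly dependent regime. Since $\Pi=p^k\le p$, the constant $K=5000/(1-\Pi)^5$ of Theorem~\ref{thm:LT2} is $O((1-p)^{-5})$, and choosing $D=D(k)$ large enough secures the hypothesis $\eps^2\mu\gtrsim 1$; Theorem~\ref{thm:LT2} then delivers the required lower bound directly.

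The remaining case is $np^{k-1}>1$, where $\Psi_k=np=\E|\Gamma_{\vp}|$ and $\delta=\Omega(np^{k-1})$ is large, so Theorem~\ref{thm:LT2} is too weak. Here I would apply the binomial random subset bootstrap: the hypothesis $|Q(\alpha)|=k\ge 2$ of Theorem~\ref{thm:rsize} is immediate, and $(\eps\mu)^2/\Lambda=\Theta(\eps^2np)=\Theta(\eps^2\Psi_k)$ makes its hypothesis $(\eps\mu)^2\ge\Lambda$ hold for $D$ large. This yields
\[
\Pr(X_k\le(1-\eps)\E X_k)\ge \tfrac12\Pr\bigl(|\Gamma_{\vp}|\le(1-\eps)np\bigr),
\]
and I would then apply Theorem~\ref{thm:LT2} once more to the binomial $Y=|\Gamma_{\vp}|=\sum_{i\in[n]}\indic{i\in\Gamma_{\vp}}$, for which $\delta(Y)=0$, $\Pi(Y)=p$ and $\mu(Y)=np$, to obtain $\Pr(Y\le(1-\eps)np)\ge\exp\bigl(-5000\eps^2np/(1-p)^5\bigr)$. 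The main obstacle I anticipate is bookkeeping rather than anything conceptual: ensuring that $D=D(k)$ is chosen large enough so that the hypotheses of both theorems are satisfied in each case, and checking that the $(1-p)^{-5}$ factor produced by $\Pi\le p$ in the first case matches the one produced by $\Pi=p$ in the binomial tail of the second, so that the two cases combine into a single bound of the required form.
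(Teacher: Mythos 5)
Your proposal is correct and follows essentially the same route as the paper's proof: the upper bound via \eqref{eq:J} and the variance computation $\mu^2/\Lambda=\Theta(\Psi_k)$, the case $\Psi_k=n^2p^k$ via Theorem~\ref{thm:LT2} applied to $X_k$ (using $\delta=O(1)$ and $\Pi\le p$), and the case $\Psi_k=np$ via the bootstrap of Theorem~\ref{thm:rsize} followed by Theorem~\ref{thm:LT2} applied to $|\Gamma_{\vp}|$. The only detail worth tightening is the prefactor in the second case: when $\eps=1$ and $\Psi_k$ is tiny the additive $\log 2$ from a blanket factor $\tfrac12$ cannot be absorbed into $(1-p)^{-5}C\eps^2\Psi_k$, so use the constant $c=1/2+\indic{\eps=1}1/2$ as stated in Theorem~\ref{thm:rsize} (or simply $\Pr(X_k=0)\ge\Pr(|\Gamma_{\vp}|=0)$) there, exactly as the paper does.
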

\begin{proof}
Let $\mu=\E X_k$, $\Lambda=\Lambda(X_k)$ and $\delta = \delta(X_k)$. 
Routine calculations, analogous to Example~3.2 in~\cite{JLR}, reveal 
that 
\begin{equation}\label{eq:AP:dL}
\delta = \Theta(n p^{k-1}+p) \quad \text{ and } \quad \mu^2/\Lambda = \mu/(1+\delta) = \Theta(\Psi_k),
\end{equation}
where the implicit constants depend only on $k$. Hence the upper 
bound of \eqref{eq:AP} is an immediate consequence of \eqref{eq:J}. 
For the lower bound we pick, with foresight, $D=D(k) \ge 1$ such that 
$\E X_k \ge \Psi_k/D$ and $\mu^2/\Lambda \ge \Psi_k/D$ for $n \ge n_0(k)$. 

If $\Psi_k = n^2p^k$, then Theorem~\ref{thm:LT2} (with $X=X_k$) yields 
\[
\Pr(X_{k} \le (1-\eps) \E X_{k}) \ge \exp\bigl\{- \Theta((1-p)^{-5}\eps^2 \Psi_k)\bigr\} 
\]
since $\eps^2 \E X_k \ge \eps^2 \Psi_k/D \ge \indic{\eps<1}$, 
$\Pi(X_k)=p^k \le p$, $\delta=O(1)$ and $\E X_k = \Theta(\Psi_k)$. 

If $\Psi_k = np$, then Theorem~\ref{thm:rsize} (with $X=X_k$) and 
Theorem~\ref{thm:LT2} (with $X=|\Gamma_{\vp}|$) yield, with $d=1/2+ \indic{\eps=1}1/2$,
\[
\Pr(X_{k} \le (1-\eps) \E X_{k}) \ge d \Pr(|\Gamma_{\vp}| \le (1-\eps) \E |\Gamma_{\vp}|) \ge \exp\bigl\{-\indic{\eps<1}\log 2-\Theta((1-p)^{-5} \eps^2 \Psi_k)\bigr\} 
\]
since $(\eps \mu)^2 \ge \Lambda \eps^2 \Psi_k/D \ge \indic{\eps<1}\Lambda$, 
$\eps^2 \E |\Gamma_{\vp}| = \eps^2 \Psi_k \ge \indic{\eps<1}$ and 
$\E |\Gamma_{\vp}|=\Psi_k$. This completes the proof of \eqref{eq:AP} since 
$\indic{\eps<1}\log 2 \le \indic{\eps<1}D \le (1-p)^{-5} \eps^2 \Psi_k$.
\end{proof}
For Schur triples, which are defined in Section~\ref{sec:bstrp:sdcmp}, 
the same calculations carry over (with $k=3$; the point is that 
\eqref{eq:AP:dL} holds), yielding an analogous lower tail estimate. 
Related results for the upper tail of arithmetic progressions and 
Schur triples have been established by Warnke~\cite{AP}.

\subsection{Random hypergraphs}\label{sec:RHG} 
Finally, we consider the lower tail of the number $X_H=X_H(n,p)$ of 
copies of a given $k$-graph $H$ in $G^{(k)}_{n,p}$, and prove 
Theorems~\ref{thm:HG:const}--\ref{thm:HG:extr}. Here the following 
precise analysis of $\Lambda(X_H)$ is at the heart of our approach. 
In fact, Lemma~\ref{lem:lambda:HG} is essentially given in~\cite{JLR_ineq} 
(for $k=2$), but the restriction to subgraphs from $\cI_H$ is new and 
crucial for our purposes: the key point is that \emph{every} copy of 
$G \in \cI_H$ in $H$ is induced. 
Recall that $m_k(H)$ is defined by \eqref{mk}.
\begin{lemma}\label{lem:lambda:HG}
Let $H$ be a $k$-graph with $e_H \ge 1$. Define $\cI_H$ as the 
collection of all non-isomorphic subgraphs $J \subseteq H$ which 
satisfy $e_J \ge \max\{e_K,1\}$ for all $K \subseteq H$ with 
$v_K=v_J$. For all $p=p(n) \in (0,1]$ we have 
\begin{gather}
\label{eq:HG:lambda}
\Lambda(X_H) = (1+o(1))\sum_{J \in \cI_H}C^2_{J,H} \frac{(\E X_H)^2}{\E X_J} = \Theta\Bigl(\frac{(\E X_H)^2}{\min_{J \in \cI_H}\E X_J}\Bigr),\\
\label{eq:HG:min} 
\min_{J \in \cI_H}\E X_J = o(\min_{J \subseteq H, e_J \ge 1, J {\not\in}^* \cI_H}\E X_J ),
\end{gather}
where $C_{J,H}$ denotes the number of copies of $J$ in $H$, and 
$J {\not\in}^* \cI_H$ means that there is no $J' \in \cI_H$ which 
is isomorphic to $J$. In addition, $p = \omega(n^{-1/m_k(H)})$ implies 
$\min_{J \in \cI_H}\E X_J = \binom{n}{k}p$ and 
$\Lambda(X_H) = (1+o(1)) e_H^2 (\E X_H)^2/[\binom{n}{k}p]$. 
\end{lemma}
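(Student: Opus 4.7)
The plan is to expand $\Lambda(X_H)-\mu$ via the standard classification of dependent pairs of copies. Let $\cH$ denote the set of labelled copies of $H$ in $K^{(k)}_n$. For an ordered pair $(\alpha,\beta)\in\cH\times\cH$ with $\alpha\ne\beta$ and $E(\alpha)\cap E(\beta)\ne\emptyset$, define the intersection $\alpha\cap\beta$ to be the $k$-graph with vertex set $V(\alpha)\cap V(\beta)$ and edge set $E(\alpha)\cap E(\beta)$, and let $J$ be its isomorphism type; necessarily $J\subseteq H$ with $e_J\ge 1$. Since $\E I_\alpha I_\beta=p^{|E(\alpha)\cup E(\beta)|}=p^{2e_H-e_J}$, grouping by $J$ gives
\[
\Lambda(X_H)-\mu=\sum_{J} N_J\,p^{2e_H-e_J},
\]
where $N_J$ is the number of ordered pairs with intersection isomorphic to $J$.

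A standard but careful count (essentially a refinement of the calculation in~\cite{JLR_ineq}) yields
\[
N_J\,p^{2e_H-e_J}=(1+o(1))\,C_{J,H}^2\,\frac{(\E X_H)^2}{\E X_J}
\]
for each $J$; the leading term corresponds to pairs in which $V(\alpha)\cap V(\beta)$ has exactly $v_J$ vertices spanning a copy of $J$ in both $\alpha$ and $\beta$, and $V(\alpha)\setminus V(\beta)$ and $V(\beta)\setminus V(\alpha)$ are disjoint, while the $o(1)$ absorbs the $O(n^{2v_H-v_J-1})$ configurations with accidental additional common vertices. To reduce the sum to $J\in\cI_H$, I would observe that if $J'\subseteq H$ has $e_{J'}\ge 1$ but $J'\notin^*\cI_H$, then by the definition of $\cI_H$ there exists $K\in\cI_H$ with $v_K=v_{J'}$ and $e_K>e_{J'}$, whence $\E X_K/\E X_{J'}=\Theta(p^{e_K-e_{J'}})\le 1$ and the $J'$-contribution at vertex count $v_{J'}$ is no larger than the $K$-contribution. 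Combined with the fact that the single edge $K^{(k)}_k$ lies in $\cI_H$ at vertex count $k$ (where no non-$\cI_H$ partner exists), a short case analysis using $n\to\infty$ shows both $\sum_J=(1+o(1))\sum_{J\in\cI_H}$ and $\min_{J\in\cI_H}\E X_J=o(\min_{J\notin^*\cI_H,\,e_J\ge 1}\E X_J)$, establishing \eqref{eq:HG:min}. The $\Theta$-bound in \eqref{eq:HG:lambda} then follows because a finite positive sum lies between its largest summand and $|\cI_H|$ times it.

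For the final assertion, note that $K^{(k)}_k\in\cI_H$ with $C_{K^{(k)}_k,H}=e_H$ and $\E X_{K^{(k)}_k}=\binom{n}{k}p$. The definition \eqref{mk} of $m_k(H)$ yields $(e_J-1)/(v_J-k)\le m_k(H)$ for every $J\subseteq H$ with $e_J\ge 2$, so $p=\omega(n^{-1/m_k(H)})$ forces $\E X_J/[\binom{n}{k}p]=\Theta(n^{v_J-k}p^{e_J-1})\to\infty$; for $J\in\cI_H$ with $v_J>k$ and $e_J=1$ the ratio $\Theta(n^{v_J-k})$ also tends to infinity. Hence $K^{(k)}_k$ uniquely attains the minimum in \eqref{eq:HG:lambda}, and isolating its contribution gives $\Lambda(X_H)=(1+o(1))\,e_H^2(\E X_H)^2/[\binom{n}{k}p]$. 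The main technical obstacle is the careful bookkeeping behind the count of $N_J$: the automorphism factors of $J$ and $H$ must be tracked so that the final leading constant comes out to exactly $C_{J,H}^2$, and the case analysis underlying the comparison of the $\cI_H$-sum with the full sum (in particular, ruling out accidentally tied contributions from different vertex counts) needs to be made completely explicit.
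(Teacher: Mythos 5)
Your overall route (classify dependent pairs by the isomorphism type of their common edge set, identify each type's contribution, then prune to $\cI_H$) is the same as the paper's, but the central step as you state it is false, and the fix is precisely the idea the paper singles out as crucial. The claim that $N_J\,p^{2e_H-e_J}=(1+o(1))\,C_{J,H}^2(\E X_H)^2/\E X_J$ holds \emph{for each} $J$ is wrong in general: take $k=2$, $H=K_4$ and $J$ the perfect matching $2K_2$; two distinct $K_4$-copies can only intersect in an edge or a triangle, so $N_J=0$ while the right-hand side is of order $n^{2v_H-v_J}p^{2e_H-e_J}\neq 0$. Your error control ("accidental additional common vertices", $O(n^{2v_H-v_J-1})$) does not address the real obstruction, which is \emph{extra common edges inside the common vertex set}: if the relevant copy of $J$ in $H$ is not induced, a pair sharing exactly $v_J$ vertices and a $J$-copy need not have intersection type $J$ at all, so the leading constant need not be $C_{J,H}^2$ (it can even vanish). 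The per-type asymptotics are only needed, and only true, for $J\in\cI_H$, and the reason is that every copy of $J\in\cI_H$ in $H$ is automatically induced (otherwise $H[V(J')]$ would have $v_J$ vertices and more than $e_J$ edges, contradicting the defining maximality of $\cI_H$); this is exactly the observation the paper uses to replace $\lambda_{J,H}^2$ by $(1-O(n^{-1}))\lambda_{J,H}^2$ in its lower bound, and it never appears in your argument. Since the offending non-$\cI_H$ terms are ultimately discarded, your final formula survives, but the lower-bound direction $\Lambda(X_H)\ge(1-o(1))\sum_{J\in\cI_H}C_{J,H}^2(\E X_H)^2/\E X_J$ is not justified without the induced-copies fact; for $J\notin\cI_H$ only an upper bound (via the triple count $N(n,J)\lambda_{J,H}^2$) is available or needed.

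A secondary incompleteness: your pruning argument only yields $\E X_K/\E X_{J'}=\Theta(p^{e_K-e_{J'}})\le O(1)$, which gives the $\Theta$-statement but not the $(1+o(1))$ in \eqref{eq:HG:lambda} nor the $o(\cdot)$ in \eqref{eq:HG:min} when $p$ is not $o(1)$; the lemma is claimed uniformly for all $p=p(n)\in(0,1]$. The missing case analysis is the paper's split at $p\lessgtr\omega n^{-1/m_k(H)}$ for a slowly growing $\omega$: below the threshold $p^{e_K-e_{J'}}=O(\omega^{-1})$, above it the single-edge term dominates every other term by a factor $\ge\omega$ (the estimate \eqref{eq:HG:lambda:mk}), which simultaneously handles \eqref{eq:HG:min} and absorbs the $O(p)$ corrections. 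You assert "a short case analysis" does this, and indeed it does, but since the uniformity in $p$ is where the subtlety lies, it should be carried out; your treatment of the final assertion for $p=\omega(n^{-1/m_k(H)})$ is, by contrast, correct and matches the paper.
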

The fairly standard proof of Lemma~\ref{lem:lambda:HG} is deferred to Appendix~\ref{apx}.  
In the following proofs of Theorems~\ref{thm:HG:const}--\ref{thm:HG:extr} 
we shall not explicitly discuss the upper bounds: once the form of 
$(\E X_H)^2/\Lambda(X_H)$ has been established, these are immediate 
consequences of \eqref{eq:J}. 
\begin{proof}[Proof of Theorem~\ref{thm:HG:const}]
Let $d=2^{-(4^{v_H^2}+2)}$, $\lambda = 2^{v_H+3}$ and $\eps_0=(2\lambda)^{-1}$. 
Since the claim is trivial otherwise, we henceforth assume $p > 0$. 
Furthermore, we use the convention that all implicit constants depend 
only on $H$, and tacitly assume $n \ge n_0(H)$ whenever necessary. 
Suppose that $\Phi_H = \E X_G$ for $G \subseteq H$ with $e_G \ge 1$. 
Using \eqref{eq:HG:lambda} and \eqref{eq:HG:min} we infer $G \in \cI_H$, 
$(\E X_H)^2/\Lambda(X_H) = \Theta(\Phi_H)$ and $\delta(X_G) = O(1)$. 
With foresight, we pick $D=D(H) \ge \log(1/d)$ such that 
$(\E X_H)^2/\Lambda(X_H) \ge \Phi_H/D$ holds. 

If $\eps \in [\eps_0,1]$, then $\Pi(X_G) = p^{e_G} \le p$, 
$\E X_G=\Phi_H$, $1 \le \eps_0^{-2} \eps^2$ and \eqref{eq:H} yield 
\begin{equation}\label{eq:HG:const:H}
\Pr(X_H \le (1-\eps) \E X_H) \ge \Pr(X_H = 0) \ge \Pr(X_G=0) \ge \exp\bigl\{-(1-p)^{-1}\eps_0^{-2} \eps^2 \Phi_H\bigr\} .
\end{equation}
 
It remains to establish \eqref{eq:HG:const} when $\eps < \eps_0$. 
We shall eventually apply Theorem~\ref{thm:vxsym} with 
$U=[\floor{n/2}]$, where $Y_G$ counts the total number of copies of 
$G$ whose vertex sets are completely contained in $U$. 
Since $G_{n,p}^{(k)}[\cU]$ has the same distribution as 
$G_{n',p}^{(k)}$ with $n'=|\cU| \approx n/2$, we readily deduce 
$3^{-v_G} \E X_G \le \E Y_G \le \E X_G$ and $\delta(Y_G)=\Theta(\delta(X_G))$. 
Furthermore, $G \in \cI_H$ implies that every copy of $G$ in $H$ is 
induced. So, using $\lambda\eps \le 1/2$, $\Pi(Y_G) \le p$, 
$\delta(Y_G)=O(1)$ and $\E Y_G = \Theta(\Phi_H)$, a combination of 
Theorem~\ref{thm:vxsym} and Theorem~\ref{thm:LT2} yields 
\[
\Pr(X_H \le (1-\eps) \E X_H) \ge d \Pr(Y_G \le (1-\lambda\eps) \E Y_G) \ge \exp\bigl\{-\log(1/d)-\Theta((1-p)^{-5} \lambda^2 \eps^2 \Phi_H)\bigr\}
\]
since $\eps^2 (\E X_H)^2 \ge \eps^2 \Phi_H \Lambda(X_H)/D \ge \Lambda(X_H)$ 
and $(\lambda\eps)^2 \E Y_G \ge \lambda^2 3^{-v_G} \eps^2 \E X_G \ge \eps^2 \Phi_H \ge D \ge 1$. 
This completes the proof of \eqref{eq:HG:const} since 
$\log(1/d) \le D \le (1-p)^{-5} \eps^2 \Phi_H$. 
\end{proof}
\begin{proof}[Proof of Theorem~\ref{thm:HG:ULO}]
Since the claim is trivial otherwise, we henceforth assume $p > 0$. 
Furthermore, since $p=o(1)$ we have $\Pi=o(1)$. 
Recalling the properties of $G$, using \eqref{eq:HG:lambda} and 
\eqref{eq:HG:min} we infer $G \in \cI_H$, 
$(\E X_H)^2/\Lambda(X_H) = (1+o(1)) \E X_G$ and $\delta(X_G) = o(1)$. 

In the special case $e_G=1$, note that uniqueness of $G$ in $H$ 
implies $e_H=1$, and that minimality of $\E X_G$ implies $v_G=k$. 
Thus $X_H=X_G\binom{n-k}{v_H-k}$ and $\delta(X_G)=0$. Using 
$\Pr(X_H \le (1-\eps) \E X_H) = \Pr(X_G \le (1-\eps) \E X_G)$, the 
lower bound of \eqref{eq:HG:ULO} now follows from Theorem~\ref{thm:LT} 
(applied to $X_G$), where $\xi=o(1)$ by our assumptions. 

Henceforth we thus assume $e_G \ge 2$. Now, in case of $H=G$ the 
lower bound of \eqref{eq:HG:ULO} follows directly from 
Theorem~\ref{thm:LT}. 
In the main case, where $G \subsetneq H$ and $e_G \ge 2$, there 
exists, by assumption, $\omega = \omega(n) \to \infty$ such that 
$\eps^2 \E X_G \ge \indic{\eps<1}\omega \log(e/\eps)$. Setting 
$\gamma = 2\exp\{-\omega^{1/2}\} = o(1)$ we have 
(when $\omega\ge1$) 
$\eps^2 \E X_G \ge \indic{\eps<1} \omega^{1/2} \log(2/(\gamma \eps))$, 
which together with Lemma~\ref{lem:varphi} yields 
$2^{-1}\gamma \eps \ge \indic{\eps<1} \exp\{-2\omega^{-1/2}\varphi(-\eps) \E X_G\}$. 
So, if $(1+\gamma)\eps < 1$ and $3 \sqrt{\gamma} < 1-\eps$, then a 
combination of Theorem~\ref{thm:rcor} (with $X=X_H$, $Y=X_G$ and 
$\kappa=0$), Theorem~\ref{thm:LT} (for $X_G$) 
and Lemma~\ref{lem:varphi3} (with 
$A=1+\gamma$) establishes \eqref{eq:HG:ULO}. Otherwise 
$\eps \ge 1- \max\{\gamma/(1+\gamma),3\sqrt{\gamma}\} = 1-o(1)$ holds, 
and then a combination of \eqref{eq:rcor:0} (with $X=X_H$ and $Y=X_G$) 
and Lemma~\ref{lem:LT4}  (for $X_G$) 
completes the proof. 
\end{proof}
\begin{proof}[Proof of Theorem~\ref{thm:HG:extr}]
We start with the main case $\eps=o(1)$. Note that Lemma~\ref{lem:lambda:HG} 
implies $\min_{J \in \cI_H}\E X_J = \binom{n}{k}p=\E |\Gamma_{\vp}|$ 
and $(\E X_H)^2/\Lambda(X_H) = (1+o(1)) \E |\Gamma_{\vp}|/e_H^2$. 
By assumption, there is $\omega=\omega(n) \to \infty$ such that 
$\eps \le 1/\omega$ and $\eps^2\binom{n}{k}p \ge \omega$. 
Let $\tau=6 e_H \omega^{-1/2}=o(1)$ 
and $A=(1+\tau)/e_H$, so that 
$\varphi(-A\eps) \le (1+o(1)) \varphi(-\eps)/e_H^2$ by 
Lemma~\ref{lem:varphi3}. Since $p=o(1)$, a combination of 
Theorem~\ref{thm:rsize2} (with $X=X_H$ and $k=e_H$) and 
Theorem~\ref{thm:LT} (with $X=|\Gamma_{\vp}|$) establishes 
\eqref{eq:HG:extr}, where the factor $c=1/2$ is negligible due 
to $\varphi(-\eps)\binom{n}{k}p \to \infty$. 

The remaining $\eps=1-o(1)$ estimate of \eqref{eq:HG:extr} 
follows from Lemma~\ref{lem:G:zero}  below and Lemma~\ref{lem:varphi2} 
since $1-p=e^{-(1+o(1))p}$ and $\varphi(-\eps)=1+o(1)$ for 
$p=o(1)$ and $\eps=1-o(1)$, respectively. 
\end{proof} 

The proof above used the following lemma, 
which follows from results of Saxton and Thomason~\cite{ST}.
\begin{lemma}\label{lem:G:zero}
Let $H$ be a $k$-graph with $e_H \ge 1$. If $p=p(n) \in [0,1]$ 
and $\eps=\eps(n) \in (0,1]$ satisfy $p= \omega(n^{-1/m_k(H)})$ and 
$\eps=1-o(1)$, then we have
\begin{equation}\label{eq:G:zero}
\Pr(X_H \le (1-\eps) \E X_H) = (1-p)^{(1+o(1)) (1-\pi_H)\binom{n}{k}} .
\end{equation}
\end{lemma}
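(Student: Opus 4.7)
The plan is to establish matching upper and lower bounds on $\Pr(X_H \le (1-\eps)\E X_H)$. The lower bound is elementary: by the definition of $\pi_H$, for all sufficiently large $n$ there exists an $H$-free $k$-graph $F_n$ on vertex set $[n]$ with $e(F_n) = \ex(n, H) = (\pi_H + o(1))\binom{n}{k}$. Since $X_H \ge 0$ and $(1-\eps)\E X_H \ge 0$, monotonicity gives $\{\gnpk \subseteq F_n\} \subseteq \{X_H = 0\} \subseteq \{X_H \le (1-\eps)\E X_H\}$, and hence
\[
\Pr(X_H \le (1-\eps)\E X_H) \ge \Pr(\gnpk \subseteq F_n) = (1-p)^{\binom{n}{k} - e(F_n)} = (1-p)^{(1+o(1))(1-\pi_H)\binom{n}{k}}.
\]

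For the matching upper bound, I would apply the hypergraph container theorem of Saxton and Thomason~\cite{ST} to the auxiliary $e_H$-uniform hypergraph on vertex set $\eknk$ whose hyperedges are the edge-sets of copies of $H$ in $\knk$. Observe that $X_H(G)$ equals the number of such auxiliary hyperedges lying inside $E(G)$. Under the assumption $p = \omega(n^{-1/m_k(H)})$ the container machinery yields a family $\mathcal{C}$ of subsets of $\eknk$ satisfying (i) every $k$-graph $G$ on $[n]$ with $X_H(G) \le (1-\eps)\E X_H = o(\E X_H)$ is a subgraph of some $C \in \mathcal{C}$; (ii) $e(C) \le (\pi_H + o(1))\binom{n}{k}$ for every $C \in \mathcal{C}$; and (iii) $\log_2 |\mathcal{C}| = o\bigpar{-\binom{n}{k}\log(1-p)}$. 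A union bound then gives
\[
\Pr(X_H \le (1-\eps)\E X_H) \le \sum_{C \in \mathcal{C}} (1-p)^{\binom{n}{k}-e(C)} \le |\mathcal{C}| \cdot (1-p)^{(1-\pi_H-o(1))\binom{n}{k}},
\]
and (iii) absorbs $|\mathcal{C}|$ into the exponent, yielding the matching upper bound.

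The main obstacle is to arrange (i)--(iii) simultaneously from a single application of the container theorem. The standard formulation provides containers only for independent sets (that is, $H$-free graphs), whereas here we must capture all $G$ with at most $o(\E X_H)$ copies of $H$; this is handled using Saxton-Thomason's version covering \emph{almost-independent} sets, namely those inducing only an $o(1)$ fraction of the hyperedges of the auxiliary hypergraph. Classical supersaturation for $H$ then forces every container $C$ to satisfy $e(C) \le (\pi_H + o(1))\binom{n}{k}$, yielding (ii). Verifying that the codegree regularity hypotheses of the container theorem hold precisely under $p = \omega(n^{-1/m_k(H)})$, so that the resulting family $\mathcal{C}$ is small enough for (iii), is the technical heart of the argument; once this is in place the proof reduces to invoking the appropriate version of the theorem in~\cite{ST} and carrying out the union bound above.
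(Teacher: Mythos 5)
Your lower bound is exactly the paper's argument and is fine. The upper bound, however, has a genuine gap at your step (iii): the container theorem does not supply a family $\mathcal{C}$ with $\log|\mathcal{C}| = o\bigpar{-\binom{n}{k}\log(1-p)}$ throughout the range of the lemma. The containers for $k$-graphs with fewer than $n^{v_H}q^{e_H}$ copies of $H$ are indexed by fingerprints $T=(T_1,\dots,T_s)$ with $\sum_i|T_i|\le cqn^{k}$, so the only general bound on their number is of order $\exp\bigpar{O(qn^{k}\log(1/q))}$. Moreover $q$ cannot be taken much smaller than $p$: to capture every $G$ with at most $(1-\eps)\E X_H$ copies you need $n^{v_H}q^{e_H} > (1-\eps)\E X_H = \Theta\bigpar{(1-\eps)n^{v_H}p^{e_H}}$, i.e.\ $q=\Omega\bigpar{(1-\eps)^{1/e_H}p}$, and since $\eps=1-o(1)$ may approach $1$ arbitrarily slowly, $q$ may be smaller than $p$ by only a factor tending to $1$ at any slow rate. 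For polynomially small $p$ one then has $qn^{k}\log(1/q)=\Theta(qn^{k}\log n)\gg pn^{k}$, so $|\mathcal{C}|\,(1-p)^{(1-\pi_H-o(1))\binom{n}{k}}$ exceeds $1$ and the naive union bound over containers is vacuous; a concrete instance is $H=K_3$, $p=n^{-1/4}$, $1-\eps=1/\log\log n$. (Also, the technical heart is not the codegree hypotheses: those hold as soon as $q\ge n^{-1/m_k(H)}$, which your $q$ satisfies.)

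The paper's proof circumvents this by using the part of the Saxton--Thomason conclusion that your sketch discards, namely that the fingerprint lies inside the graph, $\bigcup_i T_i\subseteq E(G)$. In the union bound one therefore sums only over fingerprints that are actually present in $\gnpk$: a fingerprint with $|\bigcup_i T_i|=u$ contributes a factor $p^{u}$, which offsets the $\binom{\binom{n}{k}}{u}$ enumeration term. A generating-function estimate with an auxiliary parameter $\theta$ (the paper takes $q=\omega^{-1/e_H}p$ and $\theta=q/p$) then bounds the total cost of the union over fingerprints by $\exp\bigl\{o\bigpar{p\binom{n}{k}}\bigr\}$, which is exactly what is needed for every $\eps=1-o(1)$. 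Two further points you would need to add: the case $e_H=1$ is excluded from Theorem~9.2 of~\cite{ST} and must be handled separately (there $X_H$ is a scaled binomial, and the claim follows from a direct calculation), and the $o(1)$ in the exponent should be obtained uniformly by letting the supersaturation parameter $\delta$ tend to $0$ along a suitable sequence, as in \eqref{eq:G:zero:3}.
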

\begin{proof}
For the lower bound, let $\cT_{n,H}$ be any hypergraph which achieves 
equality in the definition of $\ex(n,H)$. As every subgraph of 
$\cT_{n,H}$ is $H$-free, it follows that 
\begin{equation*}
\Pr(X_H \le (1-\eps) \E X_H) \ge \Pr(X_H = 0) \ge \Pr(G^{(k)}_{n,p} \subseteq \cT_{n,H}) = (1-p)^{\binom{n}{k}-e(\cT_{n,H})} .
\end{equation*}
This establishes the lower bound of~\eqref{eq:G:zero} since 
$e(\cT_{n,H}) = (\pi_H+o(1))\binom{n}{k}$ and $1-\pi_H \in (0,1]$.

Turning to the corresponding upper bound, 
we first consider the case $e_H \ge 2$. 
Let $0 < \delta \le (1-\pi_H)/3$. 
Theorem 9.2 in~\cite{ST} implies that there is $c=c(H,\delta) >0$ such 
that for $n \ge c$ the following holds for all $q \in [n^{-1/m_k(H)},1/c]$: 
there exists $s\le c$ and a mapping $T\mapsto C(T)$ of sequences
$T=(T_1,\dots,T_s)$ with $T_i\subseteq\eknk$ to sets $C(T)\subseteq\eknk$
such that
for every $k$-graph $G$ on $n$ vertices with less than $n^{v_H}q^{e_H}$ 
copies of $H$ there exists $T=(T_1, \ldots,T_s)$ 
such that
$E(G) \subseteq C(T)$,  $|C(T)| \le (\pi_H+\delta)\binom{n}{k}=F$
and further
$\sum_{1 \le i \le s}|T_i| \le cqn^{k}=U$ and 
$\bigcup_{1 \le i \le s}T_i\subseteq E(G)$.
(Recall that $\eknk$ is the set of all edges in the complete 
$k$-graph $\knk$. The mapping $T\mapsto C(T)$ is quite complicated; the
point of it is that we can bound the number of 'containers' $C(T)$ by the
number of sequences $T$.)

By assumption we have
$1-\eps \le 1/\omega$ and $p \ge \omega n^{-1/m_k(H)}$, 
where $\omega=\omega(n) \to \infty$.
Let $q=\omega^{-1/e_H}p$, so that 
$(1-\eps) \E X_H < \omega^{-1} n^{v_H}p^{e_H} = n^{v_H}q^{e_H}$ and 
$n^{-1/m_k(H)} \le q \le \omega^{-1/e_H} \le 1/c$ for $n \ge n_0(c)$. 
Note that we can construct a superset of all possible $T=(T_1, \ldots,T_s)$ as
follows: we first decide on 
$|\bigcup_{1 \le i \le s} T_i|=u$, 
then select $u$ edges of $K^{(k)}_n$ and decide on all the $T_i$ in 
which they appear. So, taking the union bound over all choices of $T$
that are possible for $G=G^{(k)}_{n,p}$,
using $\bigcup_{1 \le i \le s}T_i \subseteq E(G^{(k)}_{n,p})$ and 
$E(G^{(k)}_{n,p}) \setminus C(T)=\emptyset$ it follows that 
\begin{equation}\label{eq:G:zero:UB1}
\begin{split}
 \Pr(X_H \le (1-\eps) \E X_H) 
& \le 
\sum_{0 \le u \le U} \binom{\binom{n}{k}}{u}(2^s)^up^u(1-p)^{\binom{n}{k}-F}
. 
\end{split}
\end{equation}
Hence, recalling the definitions of $F$ and $U$, for any $\theta\in(0,1]$ we obtain 
\begin{equation}\label{eq:G:zero:2}
\begin{split}
 \Pr(X_H \le (1-\eps) \E X_H) 
& \le
(1-p)^{\binom{n}{k}-F}
\theta^{-U}
\sum_{0 \le u \le U} \binom{\binom{n}{k}}{u}(2^s)^up^u\theta^u
\\& 
\le
(1-p)^{\binom{n}{k}-F}
\theta^{-U}
\bigpar{1+2^sp\theta}^{\binom{n}{k}}
\le
 (1-p)^{(1-\pi_H-\delta)\binom{n}{k}}
e^{cqn^k\log(1/\theta)+2^sp\theta\binom{n}{k}}
.
\end{split}
\end{equation}
Choose $\theta=q/p=o(1)$. Then $q\log(1/\theta)=p\theta\log(1/\theta)=o(p)$, 
$e^{p} \le (1-p)^{-1}$ and \eqref{eq:G:zero:2} yield, for $n \ge n_0(c,s,\delta)$, 
\begin{equation}\label{eq:G:zero:3}
\begin{split}
 \Pr(X_H \le (1-\eps) \E X_H) 
& \le
 (1-p)^{(1-\pi_H-\delta)\binom{n}{k}}
e^{o\lrpar{p\binom{n}{k}}}
\le
 (1-p)^{(1-\pi_H-2\delta)\binom{n}{k}}
.
\end{split}
\end{equation}
It follows as usual that there is some $\delta(n)\to 0$ such that 
\eqref{eq:G:zero:3} holds with $\delta=\delta(n)$ for $n \ge n_0$, 
which together with $1-\pi_H \in (0,1]$ establishes the upper bound 
of~\eqref{eq:G:zero} when $e_H \ge 2$. 

Finally, in the remaining case $e_H=1$ (where Theorem 9.2 in~\cite{ST} 
does not apply) we have $X_H=e(G^{(k)}_{n,p})\binom{n-k}{v_H-k}$. 
Hence $X_H \le (1-\eps) \E X_H$ is equivalent to
$e(G^{(k)}_{n,p}) \le (1-\eps) \binom{n}{k}p$. Since 
$e(G^{(k)}_{n,p}) \sim \Bin\bigpar{\binom nk,p}$ and $\pi_H=0$, 
\eqref{eq:G:zero} follows by standard calculations. (For example, 
\eqref{eq:G:zero:UB1} holds with $s=0$ and $U=F=(1-\eps)p\binom{n}{k}$, 
and the reasoning of \eqref{eq:G:zero:2}--\eqref{eq:G:zero:3} carries 
over since $F=o(p\binom{n}{k})$ and $U \le \omega^{-1}pn^k = qn^k$.) 
\end{proof}

\begin{ack}
We would like to thank Andrew Thomason for giving us a 
draft of~\cite{ST} together with helpful comments on it. 
\end{ack}

\small
\begin{spacing}{0.9}
\bibliographystyle{plain}

\end{spacing}
\normalsize

\begin{appendix}
\section{Appendix}\label{apx}
In this appendix we prove Lemmas~\ref{lem:varphi}--\ref{lem:varphi3} 
and \ref{lem:lambda:HG}. 
\begin{proof}[Proof of Lemma~\ref{lem:varphi}]
By our conventions, \eqref{eq:varphi} is trivial for $\eps=1$, and so 
we henceforth assume $\eps \in [0,1)$. First, let 
$f(x)=2 \varphi(-x)-(1-x) \log^2 (1-x)$. Since $f'(x)=\log^2(1-x) \ge 0$ 
for $x \in [0,1)$, we infer $f(\eps) \ge f(0)=0$. Second, let 
$g(x) = 2 \varphi(-x)-x^2$. Since $1-x \le e^{-x}$ implies 
$g'(x)=-2\log(1-x)-2x \ge 0$ for $x \in [0,1)$, we infer 
$g(\eps) \ge g(0)=0$. Next, let $h(x) = \log^2(1-x)-2\varphi(-x)$. 
Since $h'(x) = -2x (1-x)^{-1}\log(1-x) \ge 0$ for $x \in [0,1)$, 
we infer $h(\eps) \ge h(0)=0$. Finally, $1-\eps \le e^{-\eps}$ 
implies $\varphi(-\eps) = (1-\eps)\log(1-\eps)+\eps \le \eps^2$. 
\end{proof}
\begin{proof}[Proof of Lemma~\ref{lem:varphi2}]
As \eqref{eq:varphi2} is trivial otherwise, we henceforth assume 
$\eps < 1$. Since $\varphi'(x)=\log(1+x) \le 0$ for $x \in [-1,0]$, 
we infer $\varphi(-\eps) \le \varphi(-1)=1$, which establishes the 
first inequality of \eqref{eq:varphi2}. 

Next, define $y=1-\eps$, and 
note that $y \in (0,e^{-1}]$. Let $g(x)=\phi(x-1)=1-x\log(e/x)$. 
Since 
$g'(x) = \log x \le 0$ for $x \in (0,1]$, we infer 
$g(y) \ge g(e^{-1}) = (e-2)/e > 0$. Let $h(x)=\sqrt{x}\log(e/x)$, 
and note that $h(y) > 0$. Since $h'(x) = -\log(ex)/(2 \sqrt{x}) \ge 0$ 
for $x \in (0,e^{-1}]$, we infer $h(y) \le h(e^{-1}) = 2/\sqrt{e}$. 
It follows that 
\[
\frac{1}{\varphi(-\eps)}-1 
=\frac{1-g(y)}{g(y)}
= \frac{\sqrt{y}h(y)}{g(y)} \le \frac{2 \sqrt{e y}}{e-2}\le 5 \sqrt{1-\eps},
\]
which establishes the second inequality of \eqref{eq:varphi2}. 
\end{proof}
\begin{proof}[Proof of Lemma~\ref{lem:varphi3}]
We first consider the case $y=A \eps \le 1$, so that $y \in [0,1]$. 
Since $\log(1-x) = -\sum_{j \ge 1} x^j/j \le -x -x^2/2$ for 
$x \in [0,1)$, we see that $\varphi(-y)= (1-y)\log(1-y)+y \le (1+y)y^2/2$, 
where the inequality is trivial for $y=1$ due to $\varphi(-1)=1$. 
By Lemma~\ref{lem:varphi} we have $\eps^2/2 \le \varphi(-\eps)$, 
so that 
\[
 \varphi(-A\eps) \le (1+A\eps)(A\eps)^2/2 \le (1+A\eps)A^2\varphi(-\eps) .
\]

Turning to the second inequality of \eqref{eq:varphi3} we henceforth 
assume $\gamma > 0$ and $\eps \in [0,1)$, as the claim is trivial 
otherwise. Let $\rho(x)=\varphi(-x)$, and note that 
$\rho'(x) = -\log(1-x)$ and $\rho''(x) = 1/(1-x)$. Since 
$\log(1-x) \ge -x/(1-x)$ for $x \in [0,1)$, c.f.\ \eqref{eq:taylor:log}, 
we see that 
$\rho'(\eps) \le \eps/(1-\eps)$. Note that $\gamma>0$ and 
$3\sqrt{\gamma} \le 1-\eps$ imply $0 < 3 \gamma^{3/2} \le \gamma - \gamma \eps \le 1-(1+\gamma)\eps$. 
So, recalling $\eps^2/2 \le \varphi(-\eps)$ and $A=1+\gamma$, using 
Taylor's theorem with remainder it follows that 
$0\le A\eps<1$ and 
\begin{equation*}\label{eq:varphi3gen}
\begin{split}
\varphi(-A\eps) & \le \varphi(-\eps) + \gamma \eps^2/(1-\eps) + (\gamma \eps)^2/[2(1-(1+\gamma)\eps)] \\
& \le \left(1+2 \gamma/(1-\eps) + \gamma^2/(1-(1+\gamma)\eps) \right) \varphi(-\eps) \le (1+\sqrt{\gamma}) \varphi(-\eps),
\end{split}
\end{equation*}
completing the proof of \eqref{eq:varphi3}. 
\end{proof}
\begin{proof}[Proof of Lemma~\ref{lem:lambda:HG}]
Define $\cS_H$ as the collection of all non-isomorphic subgraphs 
$J \subseteq H$ with $e_J \ge 1$. Let $N(n,H)$ denote the number of 
copies of $H$ in $K^{(k)}_n$. Note that $N(n,H)=\Theta(n^{v_H})$.
By double counting pairs $(J',H')$ of 
copies of $J$ and $H$ with $J' \subseteq H' \subseteq K^{(k)}_n$, 
using symmetry we infer that, in $K^{(k)}_n$, there are exactly 
\begin{equation}\label{eq:HG:lambda:JH}
\lambda_{J,H}(n) = \frac{N(n,H) C_{J,H}}{N(n,J)} = \Theta(n^{v_H-v_J}) 
\end{equation}
copies of $H$ containing any given copy of $J$. Since 
$\E X_J=N(n,J) p^{e_J}$ and $C_{H,H}=1$, by distinguishing all 
possible intersections of $H$-copies it follows that 
\begin{equation}\label{eq:HG:lambda:pairs}
\Lambda(X_H) \le \E X_H + \sum_{J \in \cS_H: J \neq H} N(n,J) \lambda^2_{J,H}(n) p^{2 e_H-e_J} = \sum_{J \in \cS_H} C^2_{J,H} \frac{(\E X_H)^2}{\E X_J} .
\end{equation}
Recall that $\E X_J = \Theta(n^{v_J}p^{e_J})$. By definition, for 
every $K \in \cS_H \setminus \cI_H$ there is $J \in \cI_H$ with 
$v_J=v_{K}$ and $e_J \ge e_K+1$. Using $\E X_{K} = \Omega(p^{-1}\E X_{J})$ 
we infer 
\begin{equation}\label{eq:HG:lambda:pairs:UB}
\Lambda(X_H) \le \sum_{J \in \cI_H} \bigl(1+\indic{e_J \ge 2}O(p)\bigr) C^2_{J,H} \frac{(\E X_H)^2}{\E X_J} .
\end{equation}
Suppose that $\omega=\omega(n) \to \infty$ satisfies 
$1 \le \omega \le n^{1/(2m_k(H)+1)}$. 
Using $m_k(H) \ge (e_K-1)/(v_K-k)$ when $e_K\ge2$, 
note that for $p \ge \omega n^{-1/m_k(H)}$ we have
\begin{equation}\label{eq:HG:lambda:mk}
\begin{split}
\min_{K \in \cS_H:v_K>k}n^{v_K-k}p^{e_K-1} 
& \ge \min\bigl\{n,\min_{K \in \cS_H: e_K \ge 2}\omega^{e_K-1} \bigr\} \ge \omega .
\end{split}
\end{equation}
Thus the `edge-term' with $e_J=1$ and $v_J=k$ dominates 
\eqref{eq:HG:lambda:pairs:UB} for $p \ge \omega n^{-1/m_k(H)}$: 
indeed, $K \neq J$ implies $\E X_{K} = \Omega(\omega \E X_{J})$. 
As $\omega n^{-1/m_k(H)} \le \omega^{-1}$, the $1+\indic{e_J \ge 2}O(p)$ 
factor in \eqref{eq:HG:lambda:pairs:UB} can thus be replaced by 
$1+O(\omega^{-1})$, establishing the upper bound of 
\eqref{eq:HG:lambda}. Furthermore, by combining $\E X_{K} = \Omega(p^{-1}\E X_{J})$ 
and $\E X_{K} = \Omega(\omega\E X_{J})$ in an analogous way, it is 
not difficult to see that \eqref{eq:HG:min} holds. For the lower 
bound of \eqref{eq:HG:lambda} we argue similar as for \eqref{eq:HG:lambda:pairs}, 
but restrict our attention to intersections in subgraphs $J \in \cI_H$ 
only. Moreover, to avoid overcounting (due to additional intersections 
outside of $J$), in the case $J \neq H$ we replace $\lambda^2_{J,H}(n)$ by 
\begin{equation*}
\lambda_{J,H}(n)\Bigl(\lambda_{J,H}(n)- O\bigl(\sum_{J' \subsetneq G \subseteq H:J' \cong J} \lambda_{G,H}(n)\bigr)\Bigr) = \bigl(1-O(n^{-1})\bigr)\lambda^2_{J,H}(n) ,
\end{equation*}
where we used \eqref{eq:HG:lambda:JH} and that every copy of 
$J\in \cI_H$ in $H$ is induced (which implies $v_G\ge v_J+1$). 
With these modifications, the lower bound of \eqref{eq:HG:lambda} 
follows. 
\end{proof}
\end{appendix}

\end{document}